\newcommand{\R}{\mathbb{R}}
\newcommand{\N}{\mathbb{N}}
\DeclareSymbolFontAlphabet{\mathrsfs}{rsfs}
\numberwithin{thmcounter}{section}
\newaliascnt{thmauto}{thmcounter}
\newaliascnt{defauto}{thmcounter}
\newaliascnt{exauto}{thmcounter}
\newaliascnt{lemauto}{thmcounter}
\newaliascnt{propauto}{thmcounter}
\newaliascnt{corauto}{thmcounter}
\newaliascnt{remauto}{thmcounter}
\newaliascnt{notauto}{thmcounter}
\newaliascnt{conauto}{thmcounter}
\theoremstyle{definition} % me
\newtheorem{theorem}[thmauto]{Theorem}
\newtheorem{example}[exauto]{Example}
\newtheorem{lemma}[lemauto]{Lemma}
\newtheorem{proposition}[propauto]{Proposition}
\newtheorem{corollary}[corauto]{Corollary}
\newtheorem{definition}[defauto]{Definition}
\newtheorem{remark}[remauto]{Remark}
\newcommand{\auth}[1]{}
\renewcommand{\auth}[1]{#1}
\newcommand{\reply}[1]{}
\renewcommand{\reply}[1]{$\bigstar$ {\bf reply}: {\color{red}#1}}
\newcommand{\todo}[1]{}
\renewcommand{\todo}[1]{$\dagger$ {\bf todo}: {\color{blue}#1}}
\title[From Trees to Barcodes and Back Again II]{From Trees to Barcodes and back again II: Combinatorial and probabilistic aspects of a topological inverse problem}
\author[J. Curry, J. DeSha, A. Garin, K. Hess, L. Kanari, B. Mallery]{Justin Curry, Jordan DeSha, Adélie Garin, Kathryn Hess, Lida Kanari, and  Brendan Mallery}
\date{\today}
\begin{document}
\maketitle

%\vspace{-.4in}

\begin{abstract}
In this paper we consider two aspects of the inverse problem of how to construct merge trees realizing a given barcode. Much of our investigation exploits a recently discovered connection between the symmetric group and barcodes in general position, based on the simple observation that death order is a permutation of birth order.
The first important outcome of our study is a clear combinatorial distinction between the space of phylogenetic trees (as defined by Billera, Holmes and Vogtmann) and the space of merge trees. Generic BHV trees on $n+1$ leaf nodes fall into $(2n-1)!!$ distinct strata, but the analogous number for merge trees is equal to the number of maximal chains in the lattice of partitions, i.e., $(n+1)!n!2^{-n}$.
The second aspect of our study is the derivation of precise formulas for the distribution of tree realization numbers (the number of merge trees realizing a given barcode) when we assume that barcodes are sampled using a uniform distribution on the symmetric group. We are able to characterize some of the higher moments of this distribution, thanks in part to a reformulation in terms of Dirichlet convolution. 
This characterization provides a type of null hypothesis, apparently different from the distributions observed in real neuron data and opens the door to doing more precise science.
\end{abstract}

%\vspace{-.1in}
\tableofcontents

\section{Introduction}

Trees have a nearly universal presence as a structure for organizing relationships between objects.
From hierarchical arrangements that are useful in the classification of species, to more immediate geometric applications in modeling neuron morphology \cite{TNS,TMD, TMD2}, trees have proved to be an indispensable tool.
However, as is natural for such a universal concept, subtle variations introduce important differences that are not always commented on.
In this paper, we are interested in the comparison of the notion of \emph{merge trees}, which is an important tool in topological data analysis (TDA), and that of \emph{metric phylogenetic trees}, which has gained a tremendous traction since its formalization by Billera, Holmes and Vogtmann \cite{BHV}, along with their combinatorial variants.
%%%Our motivation for comparing these objects comes from studying the combinatorics of certain inverse problems in TDA, which in turn is motivated by the study of neuroscience.
% \begin{itemize}
%     \item \emph{combinatorial trees}, which is the usual notion of tree used in, for instance, graph theory;
%     \item \emph{merge trees}, which arise from sublevel filtrations on topological spaces, and which we henceforth refer to as trees;
%     \item combinatorial classes of \emph{phylogenetic trees} which are obtained by forgetting precise edge lengths for the (metric) phylogenetic trees arising in evolutionary biology. (Metric) phylogenetic trees have recently received much attention in the Topological Data Analysis (TDA) community 
%     due to their pervasiveness in applications, their rich structure (cite Billera, Vogtmann and Holmes), and close resemblance to merge trees.
% \end{itemize} 

Our interest in delineating these objects comes in part from the fact that both merge trees and metric phylogenetic trees have associated \emph{barcodes}, which are topological invariants obtained from the persistent homology of a filtered space.
%Apart from delineating tree types, we are interested in relations between tree spaces and \emph{barcodes}, a topological invariant obtained from the persistent homology of a filtered space.
Since their introduction, barcodes or \emph{persistence diagrams} have become the standard topological summary used in TDA. 
Like all summaries, barcodes forget information about the space they are computed from. 
Thus, even when restricting to a specific set of topological spaces like trees, one may find that many different shapes give rise to the same barcode. 
Quantifying this failure of injectivity into a summary space is the realm of \textit{topological inverse problems}. 
Understanding such problems is crucial for comparing different representations of objects arising in both pure mathematics and in data science.

\subsection{High-Level Overview and Motivation}
In this paper we consider two aspects of the inverse problem of  constructing merge trees realizing a given barcode, motivated by recent work in neuroscience. In particular, the tools developed in \cite{TNS,TMD} have proven useful for the study of neuron morphologies \cite{TMD2}, which can be modeled by rooted trees, i.e., acyclic binary graphs with a distinguished vertex called the root (which corresponds to the neuron's soma), embedded in $\R^3$. In the terminology of this paper, this structure is most faithfully represented by merge trees. 

In \cite{TMD} the authors introduced the Topological Morphology Descriptor (TMD), an algorithm that returns a barcode from a tree, keeping track of the lengths of each branch with respect to a given filtration, but forgetting the adjacency relations between the branches. In this article we expand this investigation and systematically study the inverse problem from a combinatorial point of view.
We hope that understanding this relation will provide insight into the complex structures of neurons; see Figure \ref{tree_barcode_space} for a schematic. 

\begin{figure}[h]
    \centering
    \includegraphics[scale = 0.3]{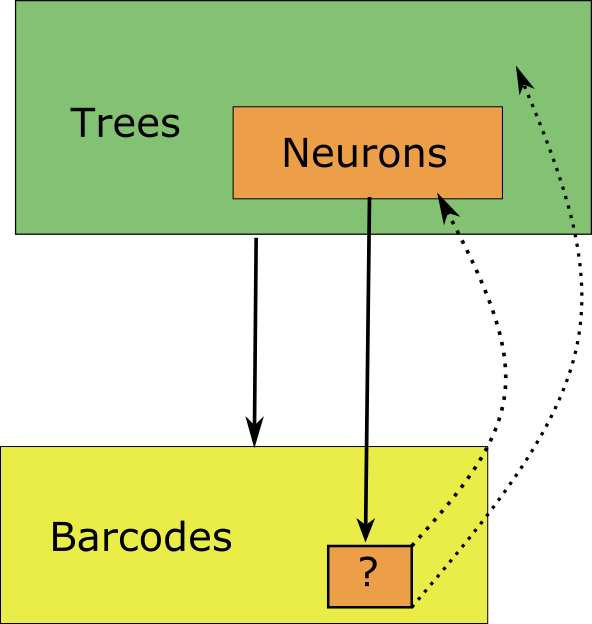}
    \caption{Motivation for understanding the pre-image of a barcode: Given a barcode computed from a neuron, what do all of its pre-images look like?}
    \label{tree_barcode_space}
\end{figure}

The general approach to the merge tree-to-barcode inverse problem is as follows. Any barcode can be realized by finitely many  trees, the number of which is called the tree realization number (TRN) or simply the realization number of the barcode. 
As observed in \cite{curry2017fiber} and \cite{TRN}, the realization number of a barcode in general position can be computed by certain containment relations between its bars, viewed as intervals on the real line. 
One of the crucial observations of \cite{TRN} is that these containment relations partition the set of barcodes (on $n$ bars) into equivalence classes, indexed by permutations in $S_n$, the symmetric group on $n$ letters. 
%This representation as a permutation allows one to easily compute the realization number of the barcode, and hence quantify the failure of injectivity, thereby giving rise to our inverse problem. 
The representation of a barcode by a permutation not only gives a formula for the tree realization number (Lemma \ref{lem:TRN-left-inversion}), but also opens the door to deeper connections between inverse problems in TDA, group theory, and combinatorics.

Besides quantifying the relative ``descriptive power'' of different summaries, in \cite{TRN} it was shown that the realization number could be used as a statistic to distinguish distributions of trees.
Figure~\ref{realization_bio} shows (log) realization numbers computed from different tree distributions, obtained by computing the realization number either from actual trees, such as neurons, or by randomly generating barcodes with specific properties. 
The datasets used were \emph{(i)} real neurons (basal and apical dendrites, drawn in red and purple), \emph{(ii)} random barcodes where the birth $b_i$ is picked, then the death $d_i$ is chosen to be larger than $b_i$, and \emph{(iii)} random barcodes with separated births and deaths so that the induced distribution on the symmetric group is uniform (see Section \ref{sec_dist_barcodes}). 
The results are striking: barcodes computed from neurons exhibit a very different distribution than barcodes with uniformly drawn permutation type; see Figure \ref{realization_bio} for a graphical comparison. 

\begin{figure}
    \centering
    \includegraphics[scale = 0.7]{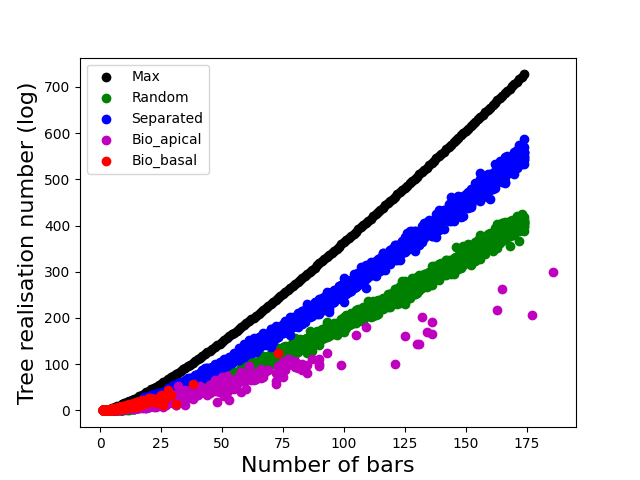}
    \caption{The log of the tree-realization number for barcodes with varying numbers of bars for TMD of basal dendrites (red), apical dendrites (purple) in comparison with ``random'' barcodes as defined in Section \ref{sec_dist_barcodes} (green), barcodes with separated births and deaths such that the distribution induced on the symmetric group is uniform (blue, see section \ref{sec_dist_barcodes} and Proposition \ref{expected_real_number}), and the maximum tree-realization number ($n!$ for $n+1$ bars) (black). }
    \label{realization_bio}
\end{figure}

In this paper, we study the realization numbers computed from barcodes with uniform permutation type (i.e., drawn from the uniform distribution on the symmetric group). 
We view this as essential for the realization number to be used for applications, as it establishes a fundamental null hypothesis for the invariant. 
Our tools are mainly combinatorial, leading us to discover unexpected connections between the inverse problem and other classical combinatorial objects. One of our main theorems (Theorem \ref{MaxChain}) casts the classic result of Erd\H{o}s that counts the number of maximal chains in the lattice of set partitions in a new, merge-tree light.
It was this result that not only permitted an easy calculation of the expected tree realization number, but also further established the fundamental differences between combinatorial classes of merge trees and phylogenetic trees.
We now provide a more detailed overview of the paper.
%We hope to initiate a deeper study of this connection in a forthcoming paper. 
% Lastly, we take a small detour in notions of average of merge trees developed in \cite{structural}. 
% We establish statistical results on how the average merge tree obtained from a barcode behaves as an invariant. We show with two simple examples that the average merge trees defined here are not compatible with reasonable notions of average barcode, opening the door to a wealth of other questions about appropriate statistics that carry over between tree spaces and barcode space. Obtaining such compatible statistics could allow scientists to share their tools between the different spaces.

 %\medskip
 
%\subsection{Related work}

\subsection{Detailed Overview}

After the introduction, we start in earnest by reviewing the basic properties of trees and barcodes in Section \ref{sec_trees_barcodes}. 
The basic graph-theoretic notion of a tree is reviewed in Definition \ref{top_tree}, as are the notions of labelling and isomorphism. 
Labellings offer one important way of distinguishing merge trees (Definition \ref{MT}) and metric phylogenetic trees (Definition \ref{defn:phylo-trees}), but Proposition \ref{prop:BHV-MT-map-compare} provides a more carefully stated distinction between the notions of BHV space, labelled merge tree space, and merge tree space. In this first subsection, combinatorial notions of merge trees and phylogenetic trees are also introduced.
The pertinence of these combinatorial notions becomes evident after we introduce barcodes in Section \ref{sec:barcodes}, which allows us to review the inverse problem for merge trees in Section \ref{sec_rel_trees_barcodes}, where the combinatorial (permutation) type of the barcode is all that matters (see Section \ref{merge_tree_sym_group}).

Section \ref{sec:combo-alg-TRN} marks the beginning of this paper's contribution to the literature.
In Section \ref{sec_real_number}, we formalize the observation of \cite{TRN} that the tree realization number (TRN) is a function of the symmetric group, by expressing the TRN in terms of the left-inversion vector associated to a permutation.
We take a minor detour in Section \ref{sec_continuous_path} to observe that the combinatorial equivalence class of each barcode is convex (Lemma \ref{lemma_cont_path}), which is of use later when we choose certain standard forms for barcodes (Definition \ref{defn:standard-form-BC}) and merge trees (Definition \ref{defn:standard-form-MT}).
We continue the algebraic analysis of the TRN in Section \ref{sec:Bruhat}, where we prove that when the symmetric group is equipped with a certain partial order (Definition \ref{defn:left-Bruhat}), the TRN is an order-preserving map.
After proving that every pair of combinatorially equivalent merge trees can be connected by a line of merge trees (Lemma \ref{lem:line-of-MTs}), we show that the sum of the tree realization numbers is equal to the total number of combinatorial types of merge trees in Lemma \ref{lem:count-combo-classes-MTs}.
Theorem \ref{MaxChain} in turn states that this number is equal to the number of maximal chains in the lattice of partitions (Definition \ref{defn:lattice-of-partitions}), which is $(n+1)!n!2^{-n}$.
This result provides a stark combinatorial contrast with the well-known fact that there are $(2n-1)!!$ types of labelled binary trees on $n+1$ nodes \cite{NumberTrees}. Section \ref{sec_MT_vs_PT} explores this contrast in greater depth by making quantitative the observation that whereas merge trees fiber over the symmetric group in a nice way, phylogenetic trees do not.

Section \ref{sec_statistics_real} finally delivers closed-form formulas for some of the trend lines in Figure \ref{realization_bio}.
We cover briefly two methods to generate random barcodes in Section \ref{sec_dist_barcodes}, before characterizing the distribution of tree realization numbers (when sampled uniformly on the symmetric group) in terms of Dirichlet convolution in Theorem \ref{thm:Dirichlet-distribution}.
The paper concludes with Proposition \ref{prop:expected-log-TRN}, which uses the left-inversion vector representation of the TRN to give a closed formula for the expected log realization number.

\subsection{Related Work}
This paper touches on many classical concepts related to trees and combinatorics, so providing a complete list of related work is impossible. However, the literature on inverse problems for TDA can be reviewed briefly here.

The concept of a geometric realization of a persistence module was considered in~\cite{lesnick2015theory} in order to prove a universality result for the interleaving distance. 
In \cite{gameiro2016continuation} the authors initiated an algorithmic study of how to find a point cloud that realizes a given persistence diagram.
While these articles are concerned with finding single realizations of persistent signatures, the present article focuses on the study of the entire pre-image of the persistent homology pipeline.

In the same vein, there is \cite{curry2017fiber}, which focused on the setting of functions on the interval and their associated merge trees.
Some of the results there were independently rediscovered and extended in \cite{TRN}, which inspired the present collaboration.
Both \cite{cyranka2020contractibility} and \cite{leygonie2021fiber} are more recent articles that investigate the fiber of the persistence map in settings that are different from ours. 

We note that the study of the (non-) injectivity of certain topological transforms is also an aspect of topological inverse problems, see \cite{oudot2017barcode,ghrist2018persistent,curry2018many,maria2019intrinsic, solomon2021geometry} for a sampling of these articles and \cite{oudot2020inverse} for a recent survey.
Better understanding the precise failure of injectivity of certain TDA invariants led to the development of enriched topological summaries (ETS) that remediate these failures, opening a promising line of research; see \cite{catanzaro2020moduli} and \cite{curry2021decorated} for some examples of these ETS.

Section \ref{sec:Bruhat} of this paper explores the relationship between the Bruhat order on the symmetric group and barcode equivalence classes. A similar connection was observed in \cite{master_thesis} in a different context.

%\subsection{Acknowledgements} moved to funding statement

%JC would like to acknowledge NSF Grant
%CCF-1850052 and NASA Contract 80GRC020C0016 for supporting his research.

%AG should acknowledge Sinergia funding.

\section{Background on Trees and Barcodes}\label{sec_trees_barcodes}
In this section, we assume basic familiarity with persistent homology in degree $0$, even though it is not necessary to understand persistence for most of these definitions. 
For a more algorithmic review of the topic in the case of trees, see \cite{TRN}. We begin by reviewing the necessary background and combinatorial results from \cite{curry2017fiber} and \cite{TRN}.
Most of this section reviews prior work, though Proposition \ref{prop:BHV-MT-map-compare} provides a novel comparison of merge trees and phylogenetic trees and foreshadows results later in the paper.

\subsection{Trees, Merge Trees and Phylogenetic Trees}

There are many notions of trees in mathematics and the sciences. We review a few of these here and explain their differences. We start with the simplest definition, that of a combinatorial tree. 

\begin{definition}\label{top_tree}
A \emph{combinatorial tree} $T$ is a connected, acyclic, binary graph. It is \emph{finite} if the number of vertices is finite. A \emph{rooted tree} is a combinatorial tree with a distinguished vertex of degree $1$ called the \emph{root}. Non-root vertices of degree $1$ are called \emph{leaves}. 

%A combinatorial tree equipped with an embedding into $\R^3$ is called a \emph{geometric tree}. 

A \emph{labelling} of a combinatorial tree $T$ is a bijective map from its set of vertices $V(T)$ to a set $S$ of labels. 
A labelling is \emph{ordered} if $S$ is a subset of of the natural numbers $\mathbb{N}$. An ordered labelling of a tree with $n$ vertices gives rise to an $n\times n$ \emph{adjacency matrix}, of which the $(i,j)$-coefficient is $1$ if there is an edge between the vertices labelled $i$ and $j$ and is $0$ otherwise. 

Two combinatorial trees $T$ and $T'$ are \emph{isomorphic} if there is a bijective map $T \to T'$ that sends vertices to vertices in an adjacency-preserving way: if two vertices in $T$ are connected by an edge, then so are their images. 
Equivalently, $T$ and $T'$ are isomorphic if there exist ordered labellings of both with respect to which their adjacency matrices are identical.
\end{definition}

In this paper, we assume all trees are finite. Moreover, we assume that there are no vertices of degree $2$, that is, each vertex is either a \emph{bifurcation} or \emph{branching} point, i.e.,~a vertex of degree $3$, or a \emph{termination}, i.e.,~a vertex of degree $1$, such as the leaf nodes or the root. 

When rooted trees are considered, there is a natural way to induce an orientation on the edges of the tree: for each vertex $v$, there is a unique path from $v$ to the root $r$. Every edge of the tree is oriented from the vertex further from $r$ to the closer one (with respect to the graph path distance). 
A vertex $v$ of $T$ is a \emph{parent} of a vertex $w$ if there is a directed edge from $w$ to $v$; the vertex $w$ is then a \emph{child} of $v$. 
Each vertex of $T$ has a unique parent, except for the root $r$, which has no parent at all.
Note that a finite combinatorial tree $T$ is fully specified by its set of vertices, equipped with the partial order specified by the ``is a parent of'' relation. 
The language of ``parents'' and ``children'' obviously comes from studying ancestral relations for people (as in family trees) and species (as in phylogenetic trees).
There are also situations where the parent-child relation is determined in part by a notion of ``height,'' which is how merge trees are defined.

\begin{definition}\label{MT}
A \emph{merge tree} is a rooted combinatorial tree $T$, together with a function on the vertices
$h:V(T)\longrightarrow\R\cup\{\infty\}$, called a \emph{height function}, that satisfies two properties.
\begin{enumerate}
    \item If $v$ is the parent of $w$, then $h(v)\geq h(w)$.
    \item If $r$ is the root node, then $h(r)=\infty$.
\end{enumerate}
Two merge trees $(T,h)$ and $(T',h')$ are \emph{isomorphic} if there is a graph isomorphism $\varphi:T\to T'$ that preserves heights, i.e., $h=h' \circ \varphi$.
A \emph{generic merge tree} is a merge tree $(T,h)$ such that the height function $h:V(T)\to \R$ is injective.
We always assume our merge trees are generic, unless otherwise indicated.
\end{definition}

\begin{remark}[Drawing Conventions for Merge Trees]\label{rmk:draw-finite}
Many authors choose to draw merge trees so that the function $h:V(T)\to\R$ resembles height when embedded in the page.
This has the effect of placing the root node  higher than the leaf nodes, contrary to how trees appear in nature.
To honor the natural orientation and size of trees in nature, we draw our merge trees with the opposite convention, so that the root is lower than the leaves and so that $f(r)=\infty$ is represented with a finite value $N$.
%This may make some ordering assumptions slightly confusing, so the reader is cautioned to take care.
\end{remark}

\begin{remark}[Alternative Definition of Merge Trees]\label{alternative_mt}
Another, perhaps more common, definition of a merge tree is that it is the Reeb graph of the epigraph of a function. 
From this point of view, the merge tree $T$ of a real-valued function $f: X \to \mathbb{R}$ is the quotient space of the epigraph $\Gamma^+:=\{(x,t)\in X\times \R \mid f(x)\leq t\}$ by the equivalence relation specified by $(x,t) \sim (y,s)$ if and only if $s=t$ and $x$ and $y$ are in the same path component of the sublevel set filtration of $f$ at $t$, i.e.,
$[x]=[y] \in \pi_0(f^{-1}(-\infty, t])$. 
Since the projection map from $\Gamma^+$ onto the second coordinate is constant on equivalence classes, this projection map factors to define the height function.
Under reasonable tameness conditions, the quotient space is homeomorphic to the geometric realization of a combinatorial tree, where vertices correspond to connected components of ``critical'' points.
\end{remark}

\begin{example} 
A typical example of merge tree is one arising from measuring height on an embedded manifold $X\subseteq \R^n$.
Here ``height'' can be thought of as the scalar product with a specified unit vector.
Figure \ref{cactus} shows a simple example of a topological space and the corresponding merge tree. 

\begin{figure}
    \centering
    \includegraphics[scale=0.4]{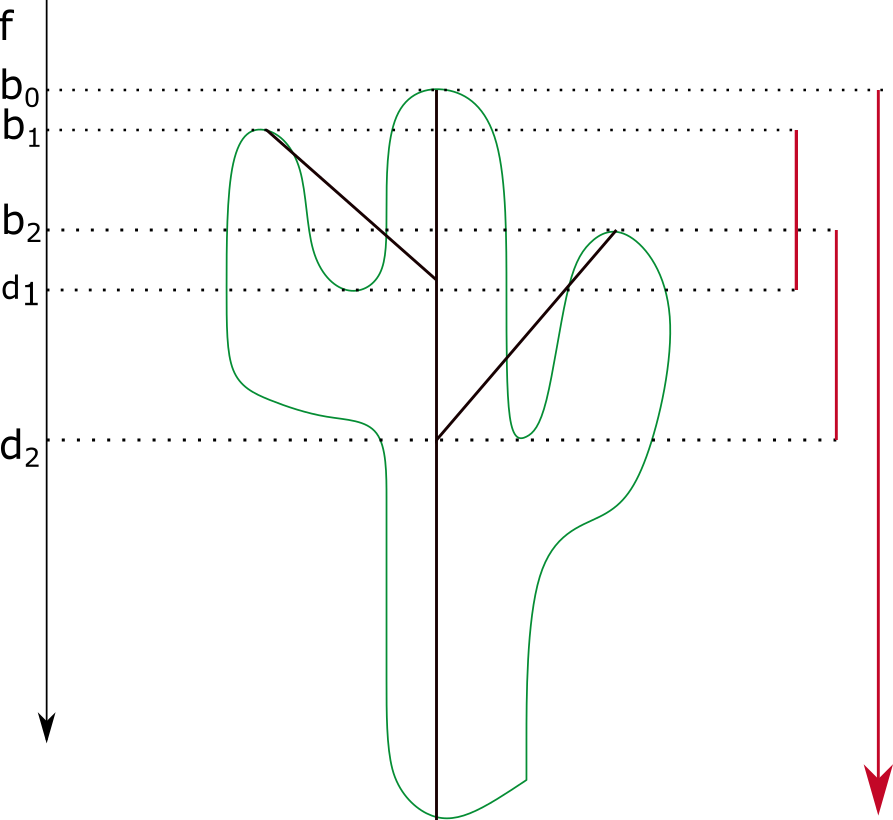}
    \caption{A circle $X$ is embedded in $\R^2$ and drawn in green to resemble a cactus with the height function $f$ measuring distance down the page. The corresponding merge tree (Definition \ref{MT}) is drawn in black. The barcode of the persistence module in degree $0$ (Definition \ref{def_pers_mod}) associated to $(X,f)$ is shown in red on the right.}
    \label{cactus}
\end{figure}
\end{example}

There is a natural ordered labelling on the vertices of a generic merge tree $(T,h)$, inherited from the function $h$, by ordering the vertices according to their $h$-value: the leaf node with lowest $h$-value is labelled $0$, and the remaining nodes are labelled based thereafter on the order in which they appear. 
We call the labels on the leaves the \emph{birth labels} and the ones on the internal vertices the \emph{death labels}, for reasons that will become clear later in the paper when we review persistent homology.
\\

We are now in a position to state the first novel definition of the paper.
Recall that two graphs are isomorphic if they admit ordered labellings making their adjacency matrices the same.
A merge tree includes the additional data of heights of each node.
By focusing separately on the order of births and the order of deaths, along with adjacency data, we have a more flexible notion of a merge tree.

\begin{figure}
    \centering
    \includegraphics[width=.6\textwidth]{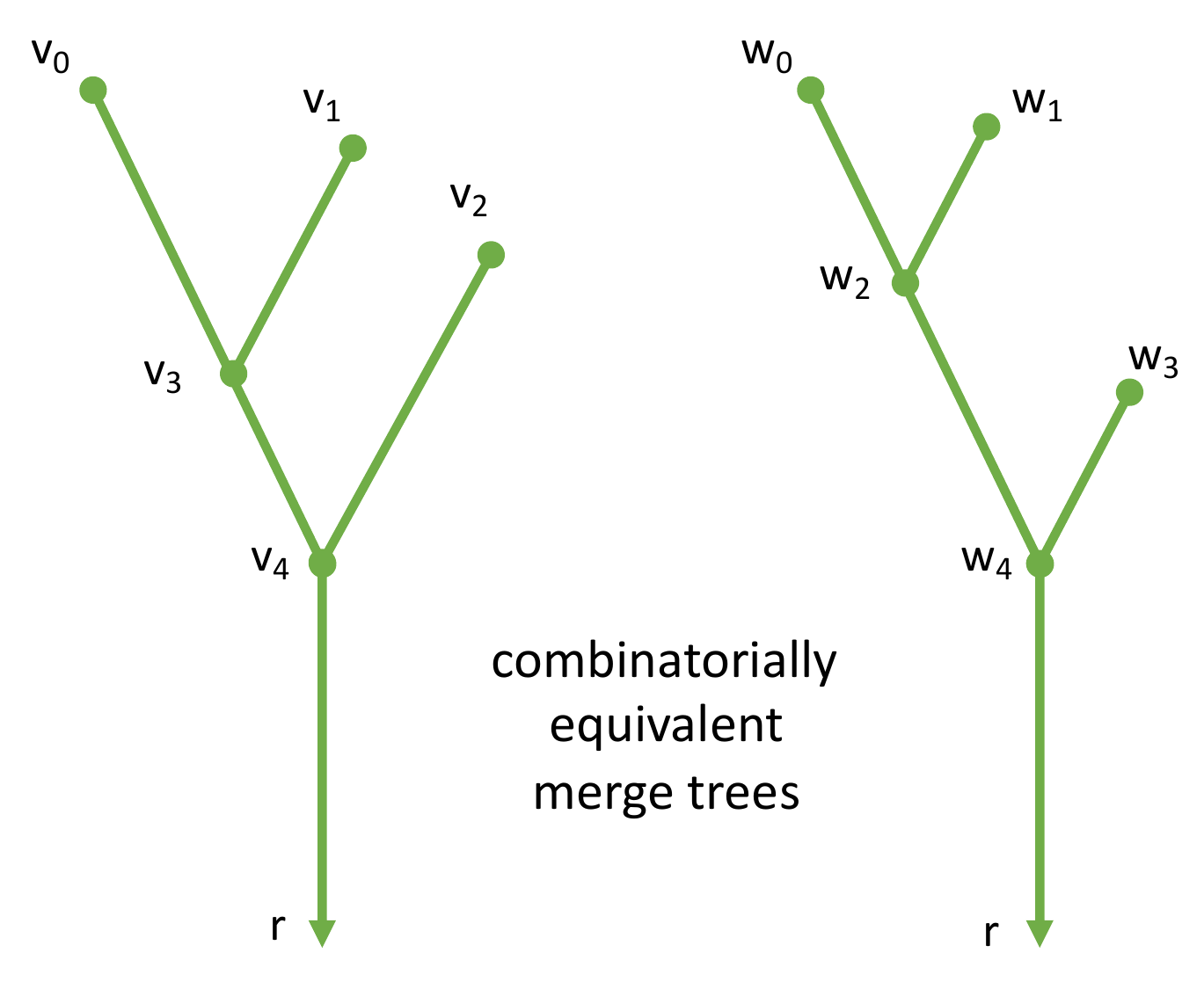}
    \caption{Two combinatorially equivalent merge trees are shown. Notice that the total order of the vertices is not preserved, but the orders among leaf nodes and internal nodes are preserved separately.}
    \label{fig:combo-equiv-MTs}
\end{figure}

\begin{definition}\label{defn:combo-merge-tree}
Two generic merge trees $(T,h)$ and $(T',h')$ are \emph{combinatorially equivalent} if they are isomorphic as graphs via a graph isomorphism preserving the orders of births and of deaths, respectively.
In more detail, $(T,h)$ and $(T',h')$ are combinatorially equivalent if there exists a graph isomorphism $\varphi:T\to T'$ such that the following conditions hold.
\begin{enumerate}
    \item For every pair of leaf (birth) nodes $v_i$ and $v_j$ in $T$, if $h(v_i)<h(v_j)$, then  $h'(\varphi(v_i))<h'(\varphi(v_j))$. 
    \item For every pair of internal (death) nodes $v_i$ and $v_j$ in $T$, if $h(v_i)<h(v_j)$, then $h'(\varphi(v_i))<h'(\varphi(v_j))$.
\end{enumerate}
We note that these two conditions specify two different sets for the logical quantifier and that the total order on vertices need not be preserved;  see Figure \ref{fig:combo-equiv-MTs} for an example.
\end{definition}

\begin{remark}\label{rmk_comb_MT}
Note that that combinatorial equivalence classes of merge trees are simply combinatorial trees equipped with a labelling of the leaves and a labelling of the internal nodes. We call such a tree a \emph{combinatorial merge tree}.
\end{remark}

\begin{example}[Translation Invariant]
Consider two generic merge trees $(T,h)$ and  $(T,h')$ such that $h'=h+\Delta$ for some real number $\Delta$.
We say $(T',h')$ is a \emph{translation} of $T$.
A generic merge tree is combinatorially equivalent to any translation of itself.
However, combinatorial equivalence detects relationships more general than translation; see Figure~\ref{fig:combo-equiv-MTs}.
\end{example}

\begin{example}[Sensitivity to Generators]
Although the two merge trees in Figure~\ref{fig:merge-tree-split-topology} are isomorphic as graphs, the only possible graph isomorphism reverses the birth order, hence these generic merge trees are not combinatorially equivalent. 
Notice that the homology generator of the essential class (see Section \ref{sec:barcodes}) starts with the node labelled by $0$ or $A$ on the left hand side, while on the right hand side, it starts with the $0$ or $B$ label. This is sometimes called ``instability'' or ``sensitivity'' of generators in TDA. Together with Figure~\ref{fig:combo-equiv-MTs}, these specify the three possible combinatorial equivalence classes of merge trees with three leaf nodes.

\begin{figure}
    \centering
    \includegraphics[width=.8\textwidth]{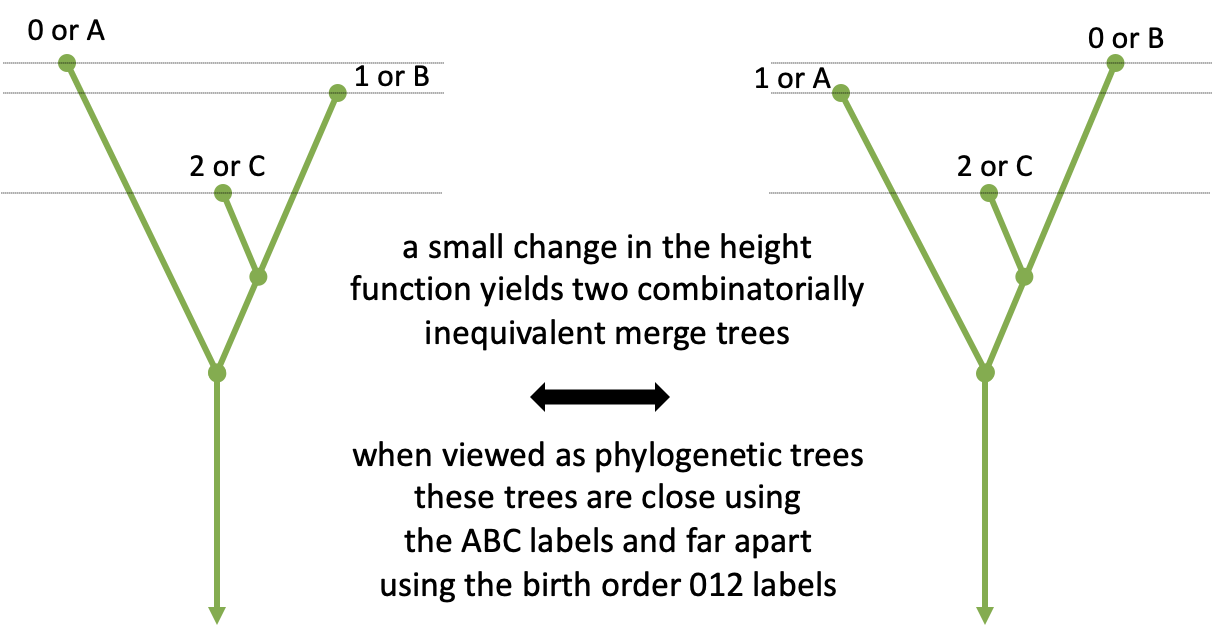}
    \caption{Two generic merge trees that are isomorphic as graphs. When they are regarded as phylogenetic trees we fix alphabetical (`ABC') names for the leaf nodes, as if the nodes represented species that went extinct at different times. With this labelling they are considered close in the metric defined by \cite{BHV}. When these are regarded as merge trees they are naturally unlabelled and are close in the interleaving distance \cite{interleaving}, but if we use birth order (`012') to label the leaf nodes and regard them as phylogenetic trees then they are far apart; see Proposition \ref{prop:BHV-MT-map-compare}.}
    \label{fig:merge-tree-split-topology}
\end{figure}

\end{example}

As mentioned earlier, most of the language concerning trees is inspired by the study of ancestral relationships. 
Although trees have been used for this purpose for centuries, a formal definition of a phylogenetic tree---and more importantly a clear coordinatization on the set of all phylogenetic trees---was given only somewhat recently in the landmark paper of Billera, Holmes and Vogtmann \cite{BHV}. 
%A phylogenetic tree encodes the relation between species in the evolution process. 
We review some of these definitions, modifying the terminology slightly for our purposes.

\begin{definition}\label{defn:phylo-trees}
%A \emph{rooted (combinatorial) phylogenetic tree} is a rooted combinatorial tree $T$ with a labelling on the leaves.
A \emph{metric phylogenetic tree} is a rooted combinatorial tree $T$ endowed with
\begin{enumerate}
    \item a labelling of the leaf nodes, and
    \item a non-negative real number associated to every parent-child pair.
\end{enumerate}
The values assigned to each parent-child pair can be considered as weights on the graph edges. 
By contrast, a \emph{combinatorial phylogenetic tree} is a rooted combinatorial tree with just a labelling of the leaf nodes.
If we say \emph{phylogenetic tree} without any modifier, we always mean a combinatorial phylogenetic tree.
\end{definition}

\begin{example}
Figure \ref{cayley_averagetree}B shows all combinatorial classes of merge trees with four leaves and Figure \ref{cayley_averagetree}C shows all combinatorial classes of phylogenetic trees with four leaves.
\end{example}

One of the key differences between metric phylogenetic trees and merge trees is that phylogenetic trees always have labelled leaf nodes, with labels independent of the lengths on the edges. This makes sense because \emph{BHV space}---the set of all possible metric phylogenetic trees on $n$ leaf nodes, denoted $\mathcal{MPT}_n$---documents all possible evolutionary relationships among $n$ fixed species.
The labels matter because the involved species matter.

On the other hand, the set of all merge trees with $n$ leaf nodes, written $\mathcal{MT}_n$, consists of isomorphism classes of merge trees, see Definition \ref{MT}.
  We consider also the set $\mathcal{LMT}_n$ of labelled merge trees with $n$ leaves, where the labelling is arbitrary (see Definition \ref{top_tree}).  Let
\[
\mathcal{I}:\mathcal{LMT}_n \longrightarrow \mathcal{MT}_n,
\]
denote the map
that sends a labelled merge tree to its isomorphism class.

We describe the relationship between these two types of tree spaces in the following proposition.

\begin{proposition}\label{prop:BHV-MT-map-compare}
For every $\Delta\in \R$, there is an injective map from the set of metric phylogenetic trees with $n$ leaves, $\mathcal{MPT}_n$, to the set of labelled merge trees with $n$ leaves, $\mathcal{LMT}_n$
\[
\mathcal{H}_{\Delta}:\mathcal{MPT}_n \longrightarrow \mathcal{LMT}_n.
\]
such that
the composite $\mathcal{I}\circ\mathcal{H}_{\Delta}$ has a fiber of cardinality $n!$ over generic merge trees, corresponding to permutations of the labels on the leaf nodes.
Moreover, if $\Delta\geq 0$, there is a natural map
\[
\mathcal{T}_{\Delta}:\mathcal{MT}_{n,\text{generic}} \longrightarrow \mathcal{MPT}_n
\]
that sends a generic merge tree to a metric phylogenetic tree that is labelled by birth order and where the distance from the root node to its child is $\Delta$.
\end{proposition}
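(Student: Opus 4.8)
The plan is to build the two maps explicitly, translating between edge weights and height differences, and then to verify the merge-tree axioms, injectivity, and the fiber count in turn. For $\mathcal{H}_{\Delta}$, I would keep the underlying rooted tree $T$ and the leaf labelling of a metric phylogenetic tree unchanged and convert its edge weights into a height function. Writing $d(v)$ for the sum of the edge weights along the unique path from a vertex $v$ to the root, I set $h(v) := \Delta - d(v)$ for every non-root vertex and $h(\mathrm{root}) := \infty$. Since the edge from a parent $v$ to a child $w$ contributes its non-negative weight, $d(w) = d(v) + \ell(vw)$, we get $h(w) = h(v) - \ell(vw) \le h(v)$, so condition (1) of Definition \ref{MT} holds, and $h(\mathrm{root}) = \infty$ gives condition (2). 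To land in $\mathcal{LMT}_n$ I would extend to a full labelling by assigning the phylogenetic leaf labels to the leaves and the canonical height-order labels to the internal nodes (the labelling induced after Definition \ref{MT}); checking that this is well defined on the underlying tree is routine.

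Injectivity of $\mathcal{H}_{\Delta}$ I would establish by reconstructing a metric phylogenetic tree from its image: one reads off $T$, the leaf labels, and the heights, recovers each interior edge weight as the height difference $\ell(vw) = h(v) - h(w)$, and recovers the weight of the root edge as $\Delta - h(c)$, where $c$ is the unique child of the root. Because the full tree is thereby reconstructed, $\mathcal{H}_{\Delta}$ is injective for every $\Delta \in \R$; note that no positivity of $\Delta$ is needed, since here $\Delta$ enters only as an additive shift.

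For the fiber count of $\mathcal{I} \circ \mathcal{H}_{\Delta}$, I would let $S_n$ act on $\mathcal{MPT}_n$ by permuting the leaf labels and observe that $\mathcal{H}_{\Delta}$ intertwines this action with relabelling the leaves of the merge tree, so $\mathcal{I} \circ \mathcal{H}_{\Delta}$ is constant on each $S_n$-orbit. Conversely, two phylogenetic trees with the same $\mathcal{I}$-image have isomorphic underlying merge trees $(T,h)$, hence the same tree shape and the same edge weights (read off from $h$ as above), and so can differ only by a permutation of their leaf labels. The decisive point is that over a \emph{generic} merge tree the $S_n$-action is free: a height-preserving graph automorphism of a merge tree with injective $h$ must fix every vertex, so distinct leaf labellings yield distinct points of $\mathcal{MPT}_n$. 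Thus every nonempty fiber over a generic merge tree is a single free $S_n$-orbit, of cardinality $n!$, as claimed.

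Finally, for $\mathcal{T}_{\Delta}$ I would reverse the dictionary: given a generic merge tree $(T,h)$, keep $T$, label the leaves in increasing order of $h$-value (well defined by genericity, and invariant under height-preserving isomorphism, so the assignment descends to isomorphism classes), give each interior parent–child pair the weight $h(v) - h(w) \ge 0$, and give the root edge the weight $\Delta$. The hypothesis $\Delta \ge 0$ is needed exactly to make this last weight a legitimate non-negative edge length, since the root sits at height $\infty$ and its incident edge has no finite length to inherit. \textbf{The main obstacle} is precisely this treatment of the root vertex and its edge: it is the one place where the merge-tree and phylogenetic conventions genuinely disagree, and it forces the asymmetry between $\Delta \in \R$ (a harmless shift for $\mathcal{H}_{\Delta}$) and $\Delta \ge 0$ (a genuine length for $\mathcal{T}_{\Delta}$). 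The secondary subtlety is pinning the fiber size to exactly $n!$ rather than a proper divisor, which is what makes genericity, and the consequent triviality of the automorphism group, indispensable.
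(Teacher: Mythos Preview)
Your construction is correct and follows essentially the same approach as the paper: define $h(v)=\Delta-d(r,v)$ to build $\mathcal{H}_{\Delta}$, recover edge weights from height differences for injectivity and for $\mathcal{T}_{\Delta}$, and use genericity to pin the fiber size at $n!$. Your write-up is in fact more careful than the paper's, which does not explicitly verify the merge-tree axioms, spell out the injectivity inverse, or explain why $\Delta\ge 0$ is required only for $\mathcal{T}_{\Delta}$; your $S_n$-orbit argument for the fiber is also more precise than the paper's one-line appeal to the canonical birth-order labelling.
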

\begin{proof}
Given a metric phylogenetic structure on a rooted tree $T$, we can define a height function $h$ on $T$ as follows.  Every node $v$ that is not the root node $r$ is assigned the function value $h(v):=\Delta-d(r,v)$, where $d$ is the sum of the weights of each edge along the unique path connecting $r$ to $v$.
This defines the map $\mathcal{H}_{\Delta}$ in the statement of the proposition.

As explained earlier, every generic merge tree admits  a canonical ordering of its leaf nodes by height order.
If two generic labelled merge trees in the image of $\mathcal{H}_{\Delta}$ are isomorphic as merge trees, then there is a unique permutation of the $n$ leaf labels taking one labelling to the other. This proves the second statement.

Finally, the map $\mathcal{T}_{\Delta}$ sends a generic unlabelled merge tree $(T,h)$ to the metric phylogenetic structure on $T$ that has labels given by birth order and where the weight on an edge is given by the difference in heights of its two vertices. The distance from the root node to its child is given by $\Delta$.
%The fact that this map is discontinuous can be seen from Figure \ref{fig:merge-tree-split-topology}.
\end{proof}
\begin{remark}
Each of the three sets above can be equipped with topologies.
In \cite{BHV}, the space of phylogenetic trees is topologized as a CAT(0) space where each orthant records a distinct \emph{split topology}. Both labelled merge trees and merge trees can be topologized using versions of the interleaving distance~\cite{structural}. Unfortunately, the map $\mathcal{T}_{\Delta}$ is discontinuous with respect to these topologies, as can be seen from Figure \ref{fig:merge-tree-split-topology}.

Proposition \ref{prop:BHV-MT-map-compare} shows that, despite their apparent similarity, there are significant differences between metric phylogenetic trees and merge trees.
Indeed neither of the maps above is a bijection.
However, if one quotients the set of labelled merge trees by translations, then the map induced by $\mathcal{H}_{\Delta}$ should be a bijection; alternatively one could modify the definition of merge trees so that the root node has a fixed height $N$, as in the drawing convention of Remark \ref{rmk:draw-finite}.
\end{remark}

Although the proposition and remark above identify certain differences and similarities between metric phylogenetic trees and merge trees, for this paper the most important distinction is in terms of combinatorial type. 
In this respect, merge trees and phylogenetic trees are distinguished by the explicit ordering of birth and death nodes. 
This observation will lead to different formulas for the numbers of top-dimensional strata in the set of phylogenetic trees $\mathcal{PT}_n$, which is $(2n-3)!!$, and in $\mathcal{MT}_n$, which is $(n-1)!n!2^{-n+1}$.
For now, however, the reader is encouraged to consult Table \ref{table_characteristic} and Figure \ref{fig:elderrule} for two convenient summaries of the similarities and differences between combinatorial trees, merge trees, (combinatorial) phylogenetic trees, and barcodes.

\begin{figure}
    \centering
    \includegraphics[width=\textwidth]{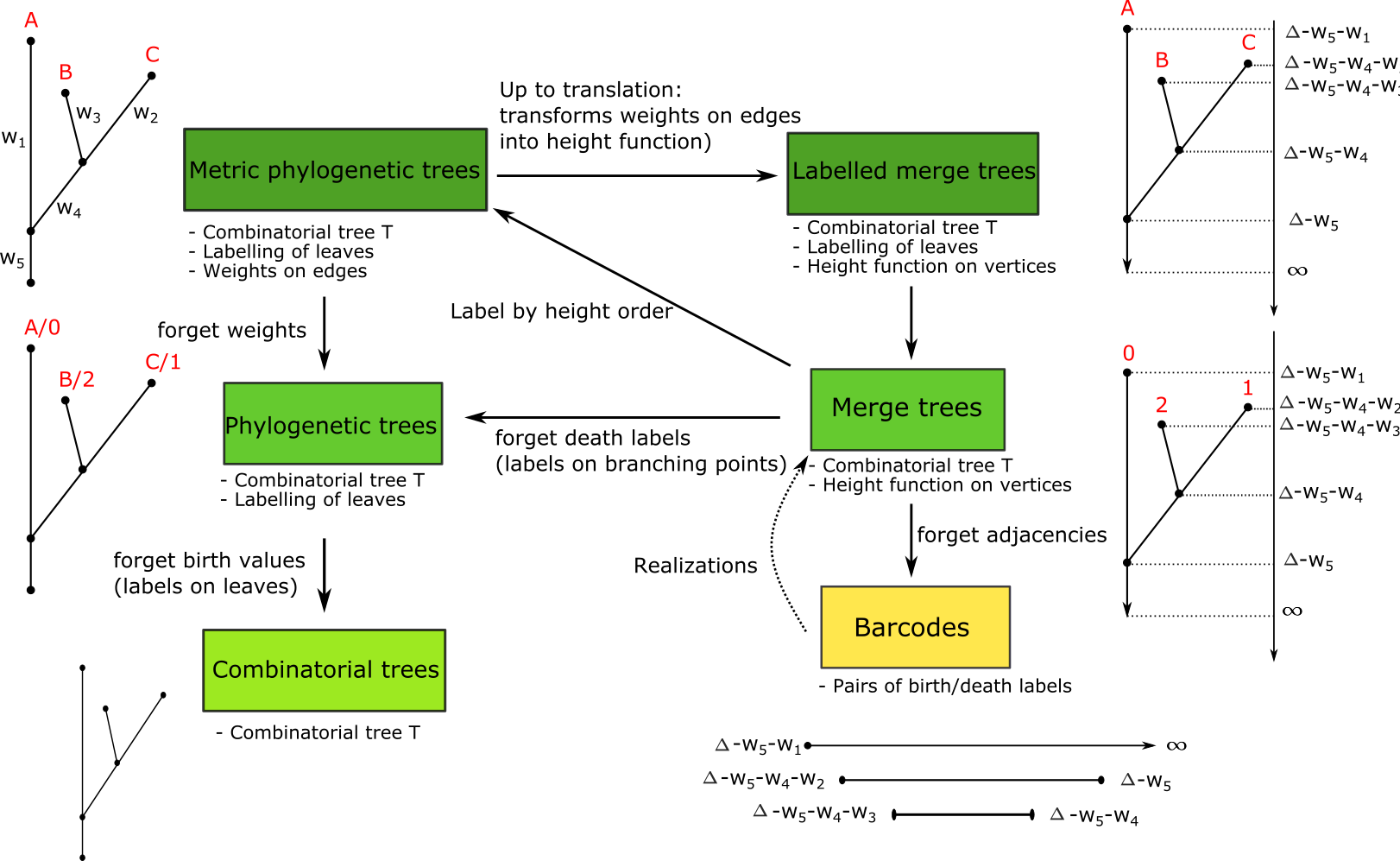}
    \caption{Summary of the different notions studied in this paper and their relations, as expressed in part by Proposition \ref{prop:BHV-MT-map-compare}. One can turn a metric phylogenetic tree (with labels A,B,C in red) into a labelled merge tree. Generic merge trees can be turned into metric phylogenetic trees by labelling according to birth order (labels $0,1,2$ in red), but this introduces a discontinuity. %For this specific example, we choose to assign the value $N$ to the root instead of $\infty$. 
    }
    \label{fig:elderrule}
\end{figure}

\subsection{Barcodes}\label{sec:barcodes}

We now recall the notions of persistent homology and barcodes. For reasons of brevity, we choose to use the categorical definition of persistent homology, but the reader who would like a more algorithmic version for the case of merge trees can read \cite{TRN} or the summary in Example \ref{barcode_comp_practice}.

\begin{definition}\label{def_pers_mod}
A \textit{persistence module} is a functor 
\[ F: (\mathbb{R}, \leq) \to Vect \]
where $(\mathbb{R}, \leq)$ is the real line with its total ordering $\leq.$
An \emph{interval module} is a persistence module $\Bbbk_I$ that is rank 1 on an interval $I\subseteq \R$ with identity maps internal to $I$ and $0$ elsewhere.
\end{definition}

A function $f: X \to \mathbb{R}$ is said to be \emph{tame} if the homology groups of the sublevel sets $\{f^t\}_{t \in \R}=\{f^{-1}((-\infty,t])\}_{t\in \R}$ have finite rank and change at a finite number of points. 

Tame functions $f$ have finitely many critical values $a_0,\cdots, a_n$, and the sublevel sets $f^{t_1}, f^{t_2}$ are homeomorphic when $t_1, t_2 \in (a_i, a_{i + 1})$ for $i = 0, \cdots, n - 1$. By \cite{crawley2015decomposition} we have the following decomposition theorem.

\begin{theorem}(Crawley-Boevey)
Any pointwise finite dimensional persistence module is isomorphic to a direct sum of interval modules, and this decomposition is unique up to reordering.
\end{theorem}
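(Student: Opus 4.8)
The plan is to split the statement into its two halves---existence of a decomposition into interval modules and uniqueness up to reordering---and dispatch them by essentially independent arguments, with almost all of the difficulty concentrated in existence.

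I would handle uniqueness first, since it becomes formal once a decomposition is known to exist. The key observation is that every interval module $\Bbbk_I$ has a \emph{local} endomorphism ring: any natural transformation $\Bbbk_I\to\Bbbk_I$ is forced to commute with the identity structure maps internal to $I$, so it acts by a single scalar, giving $\mathrm{End}(\Bbbk_I)\cong\Bbbk$, a field and hence local. Uniqueness of the decomposition up to reordering and isomorphism of summands then follows from the Azumaya generalization of the Krull--Remak--Schmidt theorem, which applies to arbitrary (possibly infinite) direct sums of modules with local endomorphism rings. Thus this half reduces to the one-line endomorphism computation together with a citation.

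For existence I would argue in two stages. First I would classify the indecomposable pfd persistence modules and show they are exactly the interval modules $\Bbbk_I$: given an indecomposable pfd module $M$, one analyzes its support $\{t\in\R : M_t\neq 0\}$ and its internal maps, using pointwise finite-dimensionality together with indecomposability and the \emph{total} order on $\R$ to force the support to be an interval on which every structure map between nonzero spaces is an isomorphism of lines, whence $M\cong\Bbbk_I$. Second---and this is the crux---I would show that an arbitrary pfd module is a direct sum of indecomposables. The natural mechanism is to peel off interval summands by tracking births and deaths: for a fixed $c\in\R$ the images $\mathrm{im}(M_b\to M_c)$ form a decreasing chain of subspaces of the finite-dimensional space $M_c$ as $b$ decreases, and the kernels of $M_c\to M_d$ form an increasing chain as $d$ increases, so both stabilize; choosing elements born and dying at prescribed values produces interval submodules, which one then assembles into a candidate coproduct decomposition.

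The hard part will be this second stage, namely verifying that the chosen collection of interval submodules genuinely forms a \emph{direct} sum that \emph{exhausts} $M$. The obstruction is set-theoretic in nature: $(\R,\le)$ is uncountable and admits no well-ordering compatible with $\le$, so one cannot build the decomposition by a naive induction along the index, and---in contrast to the finite-dimensional case---existence of a decomposition into indecomposables is not automatic for infinite modules and can fail without extra hypotheses. Pointwise finite-dimensionality is precisely what makes it work: at each fixed $t$ only finitely many summands can be supported, so all the local bookkeeping is finite even though there may be uncountably many summands globally. Making this rigorous requires a transfinite selection of mutually compatible bases, organizing the births and deaths across all of $\R$ into a coherent family of one-dimensional interval summands, and it is exactly here---in converting the pointwise finiteness into a global splitting---that the substantive work of the proof lies.
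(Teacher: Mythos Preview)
The paper does not prove this theorem at all; it is stated as a known result attributed to Crawley--Boevey, with a bare citation to \cite{crawley2015decomposition} and no argument given. There is therefore nothing in the paper to compare your proposal against: the decomposition theorem is treated as a black box, invoked only so that barcodes can be defined in the next line.

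For what it is worth, your outline is a reasonable high-level sketch of how the result is actually established in the literature. The uniqueness half is exactly as you say---the endomorphism ring of $\Bbbk_I$ is $\Bbbk$, hence local, and Azumaya's version of Krull--Remak--Schmidt applies. For existence you correctly identify the genuine obstacle: decomposing an arbitrary pfd module over an uncountable totally ordered poset into indecomposables is not automatic, and pointwise finite-dimensionality is the hypothesis that makes the transfinite bookkeeping go through. One small comment: Crawley--Boevey's original argument does not proceed by first classifying the indecomposables and then proving an abstract decomposition-into-indecomposables theorem; rather, it constructs the interval summands directly, and the fact that every indecomposable is an interval module falls out as a corollary. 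Your two-stage plan is logically fine, but stage one (classifying indecomposables without already having the decomposition in hand) is itself nontrivial and would need its own argument.
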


%%Barcode (MT barcode)
\begin{definition}
Let $F$ be a persistence module with decomposition $F\cong \bigoplus _{j\in \mathcal J} \Bbbk_{I_j}^{\oplus n_j}$. The \emph{barcode} of $F$ is the multiset
$$B(F) = \{(I_j, n_j)\}_{j \in \mathcal{J}}.$$ 
In most applications, each interval $I_j$ is of the form $[b_j,d_j)$, where $b_j$ is the \emph{birth} of the homological feature corresponding to $I_j$ and $d_j$ its \emph{death}. We call the interval $[b_j,d_j)$ a \emph{bar} in the barcode $B$. 
\end{definition}

In this paper, we represent barcodes graphically by drawing the interval between $b_j$ and $d_j$ for each index $j$. Sometimes barcodes are represented by \emph{persistence diagrams}, i.e., sets of points in $\R^2$ where the $x$-coordinate indicates birth time and the $y$-coordinate death time. Note that $x\leq y$ always in this representation.

\begin{example}[Barcodes for Merge Trees and the Elder Rule]\label{barcode_comp_practice}
Let $(T,h)$ be a merge tree.
Regarding $T$ as a one-dimensional simplicial complex, we can linearly interpolate the height function from the vertices to the entire tree.
The \emph{barcode of the merge tree $(T,h)$} is the barcode corresponding to the persistence module 
\[ 
F: (\mathbb{R}, \leq) \to Vect \qquad \text{where} \qquad F(t) = H_0\Big(h^{-1}\big((- \infty, t]\big)\Big). 
\] 
Although the barcode of $F$ is guaranteed to exist by virtue of Crawley-Boevey's theorem, there is a more direct way of constructing the barcode in the special case of merge trees, called the \emph{Elder rule} \cite{curry2017fiber}.

The Elder rule provides a concrete way to compute the barcode of a merge tree via decomposition into branches, i.e., each bar in the barcode corresponds either to a single edge or a list of adjacent edges in the merge tree.
According to the Elder rule, each leaf node marks the beginning of a bar in the barcode at the height of the leaf node.
If two leaf nodes $l_i$ and $l_j$ such that $f(l_i)>f(l_j)$ share a parent at vertex $k$, the branch that was born ``earlier" at $l_j$  survives as it is ``elder'', and the branch born $l_i$ dies, creating a bar $[f(l_i),f(k))$ in the barcode.

Under this rule, every bar begins at a leaf node and ends at an internal node with the sole exception of the bar that is born at the  leaf node with the lowest height, which is paired with infinity.
However, in our figures, in keeping with Remark \ref{rmk:draw-finite}, the lowest leaf node will be paired with $N=f(r)$, which is the height of the root node when viewed as an embedded finite tree.
A simple example is illustrated in Figure \ref{cactus}.
%; recall our drawing convention has flipped what is ``older'' or ``lower'' nodes are drawn higher on the page. 
\end{example}
 
 Although in general the barcode can be a true multiset, in this article we are concerned primarily with  barcodes that are actually sets, leading us to formulate the following definition.
 
\begin{definition}
A a barcode $B$ is \emph{strict} if is composed of  one half-infinite bar $[b_0, \infty)$, and a finite number of half open bars $[b_1, d_1),\cdots, [b_n, d_n) $ such that $b_0 < \cdots < b_n$ and $d_i\not=d_j$ if $i\not=j$. We refer to the half-infinite bar as \textit{essential}.
\end{definition}
 
\begin{example}
The barcode of a generic merge tree is always strict.
\end{example}

We summarise the different characteristics of combinatorial trees, merge trees, phylogenetic trees, and barcodes in Table \ref{table_characteristic}.

\begin{table}
\begin{tabular}{|l|l|c|c|c|}
\hline
\textbf{}                                                                               & \textbf{Combinatorial trees} & \multicolumn{1}{l|}{\textbf{Merge trees}} & \multicolumn{1}{l|}{\textbf{Phylogenetic trees}} & \multicolumn{1}{l|}{\textbf{Barcodes}} \\ \hline
\textbf{Height function}                                                                &                            & X                                         & \multicolumn{1}{l|}{}                            & \multicolumn{1}{l|}{}                  \\ \hline
\textbf{\begin{tabular}[c]{@{}l@{}}Label on leaves\\ (births)\end{tabular}}             &                            & X*                                        & X                                                & X*                                     \\ \hline
\textbf{\begin{tabular}[c]{@{}l@{}}Label on internal \\ vertices (deaths)\end{tabular}} &                            & X*                                        &                                                  & X*                                     \\ \hline
\textbf{Adjacency}                                                                      & \multicolumn{1}{c|}{X}     & X                                         & X                                                &                                        \\ \hline
\end{tabular}
\caption{Table summarising the attributes of each object defined above. Labels on leaves and internal vertices of merge trees are marked by an asterix to indicate that they are inherited from the height function. Similarly, the ``labels'' on barcodes (their birth and death values) are inherited from the height function on the tree.}
\label{table_characteristic}
\end{table}

\subsection{Realizations of Barcodes} \label{sec_rel_trees_barcodes}

As described in the previous section, every merge tree has an associated barcode.
It is natural to ask whether the map from merge trees to barcodes determined by the Elder rule is injective, but it is not hard to see that it is not.  
A somewhat more surprising result, proven independently in \cite{curry2017fiber} and \cite{TRN}, is that the failure of injectivity of the Elder rule map can be quantified for generic barcodes.
More precisely, we say that a merge tree $(T,h)$ \textit{realizes} a barcode $B$ if the barcode of $(T,h)$ is $B$.
The \textit{tree realization number}, $R(B)$, of a strict barcode $B$ is the number of combinatorial trees $T$ admitting a height function $h$ such that $(T,h)$ realizes $B$.

\begin{figure}
    \centering
    \includegraphics[width= \textwidth]{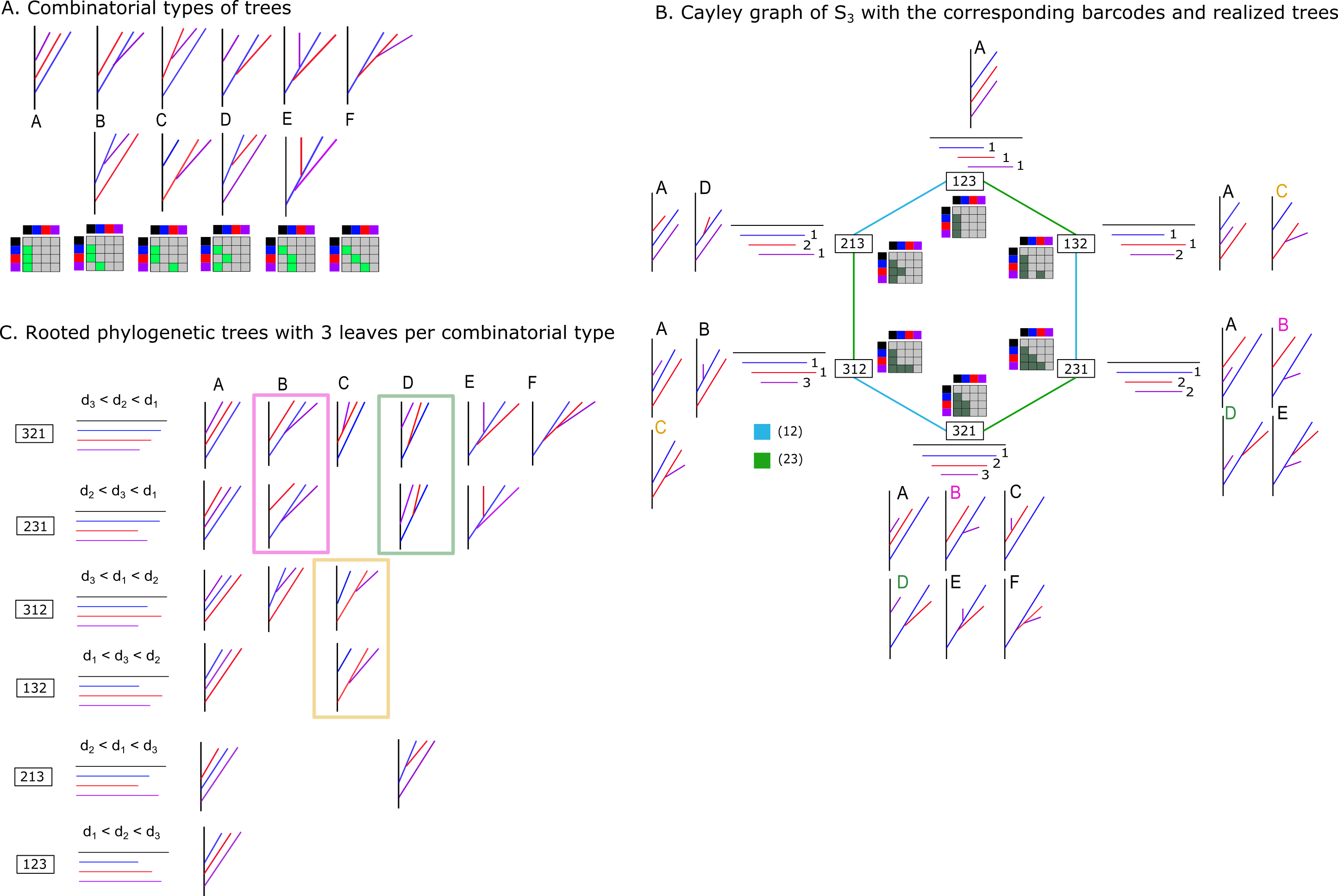}
    \caption{A. Combinatorial types of rooted trees with three leaves and the corresponding adjacency matrices. B. Cayley graph generated by the two adjacent transpositions of $S_3$ and the corresponding barcodes, together with all the combinatorial types of trees that realize a barcode. Colored letters correspond to different types of merge trees that are the same as phylogenetic trees (indistinguishable trees), illustrating the result of Section \ref{sec_MT_vs_PT}. C. Rooted phylogenetic trees with  three leaves. We represent these phylogenetic trees organised by the cominatorial types of barcodes they would have if they had death labels as well. The three pairs of trees within colored squares correspond to the indistinguishable trees defined in Section \ref{sec_MT_vs_PT}: the internal nodes are incomparable, so they can have two different death value that lead to different merge trees. In phylogenetic trees, the label order does matter: for instance, in the first column, all the trees are of the same combinatorial type A but  correspond to different phylogenetic trees. To go from the space of phylogenetic trees to the space of combinatorial trees, one forgets the labels and considers the adjacencies only, see Figure \ref{fig:elderrule}.}
    \label{cayley_averagetree}
    \vspace{1cm}
\end{figure}

\begin{proposition}
[\cite{curry2017fiber}, \cite{TRN}]\label{mtcount}
Let $B$ be a strict barcode with $n$ finite length half-open bars $\{I_j=[b_j,d_j\}_{j = 1}^n$ and one infinite bar $I_0=[b_0,\infty)$. 
The number of merge trees that realize $B$ is 
\[ 
R(B) = \prod_{j = 1}^n\mu(I_j) 
\]
where $\mu(I_j) = |\{I_k | I_j \subset I_k\}|$.
The value $\mu(I_j)$, called the \emph{index} of bar $I_j$, is the number of bars of $B$ (including the infinite bar) that contain $I_j$.
\end{proposition}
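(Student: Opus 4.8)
The plan is to set up an explicit bijection between the combinatorial trees realizing $B$ and a simple combinatorial gadget --- a choice, for each finite bar, of an ``elder'' bar into which it merges --- and then to observe that this gadget is counted by the product $\prod_{j=1}^n \mu(I_j)$. The whole argument rests on the branch decomposition supplied by the Elder rule (Example \ref{barcode_comp_practice}).

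First I would read off, from any merge tree $(T,h)$ realizing $B$, a \emph{parent function} $p$. By the Elder rule each bar $I_j$ is carried by a branch that is born at the leaf of height $b_j$ and, when $j\neq 0$, dies at an internal vertex of height $d_j$ where it merges into a strictly elder branch that is still alive there. If that surviving branch carries the bar $I_k$, then $b_k<b_j$ (it is elder) and $d_k>d_j$ or $k=0$ (it has not yet died); equivalently $I_j\subset I_k$. Setting $p(j)=k$ thus produces a function with $I_j\subset I_{p(j)}$ for every finite $j$. Since the heights of all vertices are pinned down by $B$ (leaves at the $b_i$, internal vertices at the $d_j$), the branch decomposition, and hence $p$, is recovered from $(T,h)$, and the original tree is recovered from $p$; so this map is injective.

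Next I would prove surjectivity by constructing, from an \emph{arbitrary} function $p$ with $I_j\subset I_{p(j)}$, a merge tree realizing $B$. I would place leaves at heights $b_0,\dots,b_n$ and internal vertices $w_1,\dots,w_n$ at heights $d_1,\dots,d_n$, and route the branch of each bar $I_k$ from its leaf up through the vertices $\{w_j : p(j)=k\}$ in increasing height order to its own death vertex $w_k$ (or to the root when $k=0$). The containment $I_j\subset I_{p(j)}$ forces $d_j<d_{p(j)}$, so every merge happens strictly below the absorbing branch's own death and heights increase monotonically along each branch; distinctness of the deaths makes every internal vertex a single degree-three merge; and because births strictly decrease along the chain $j\mapsto p(j)\mapsto\cdots$, that chain is acyclic and terminates at the essential bar $I_0$, forcing the graph to be connected (a vertex--edge count of $2n+2$ vertices and $2n+1$ edges then confirms it is a tree). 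Finally I would check that the Elder rule returns exactly $B$: at $w_k$ the component carrying $I_k$ has minimal birth $b_k$, whereas the component carrying $I_{p(k)}$ has strictly smaller minimal birth, so $I_k$ is the bar that dies, at height $d_k$.

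With the bijection established the count is immediate: the value $p(j)$ ranges freely and independently over the bars containing $I_j$, of which there are exactly $\mu(I_j)$ (the essential bar $I_0$ always being among them), so the number of such parent functions, and hence of realizing trees, is $\prod_{j=1}^n\mu(I_j)$. The main obstacle is the surjectivity/independence step: one must be sure that \emph{every} combination of local elder-choices genuinely assembles into a valid merge tree with barcode $B$, with no global constraint coupling the choices. The two facts that decouple them --- containment forcing $d_j<d_{p(j)}$, and births strictly decreasing up the parent chain --- are exactly what rule out height violations and cycles, and I would foreground them as the crux of the proof.
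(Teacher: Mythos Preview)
Your proposal is correct and is essentially the same argument the paper sketches: both rest on the Elder-rule branch decomposition and identify a realizing tree with an independent choice, for each finite bar $I_j$, of the containing bar into which it merges. The paper phrases this as a recursive attachment of bars by death time (induction on $n$), while you package it as an explicit bijection with a parent function $p$, but the underlying combinatorics are identical.
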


Although the proof of this theorem, by induction on the number of bars, can be found in \cite{curry2017fiber} and \cite{TRN}, we provide a brief sketch  for the sake of intuition.
Start by setting $T_0 = I_0 = [b_0, \infty)$. Since the merge tree $T$ is connected, we can recursively attach bars by death time, first to $T_0$ and then in the $j^\text{th}$ step to $T_{j}$ to get $T_{j + 1}$, according to the Elder rule. Each possible choice of attachment then gives a particular merge tree isomorphism class. See Figure \ref{snowflake} for a graphical representation of this process. 

\begin{figure}
    \centering
    \includegraphics[scale=0.15]{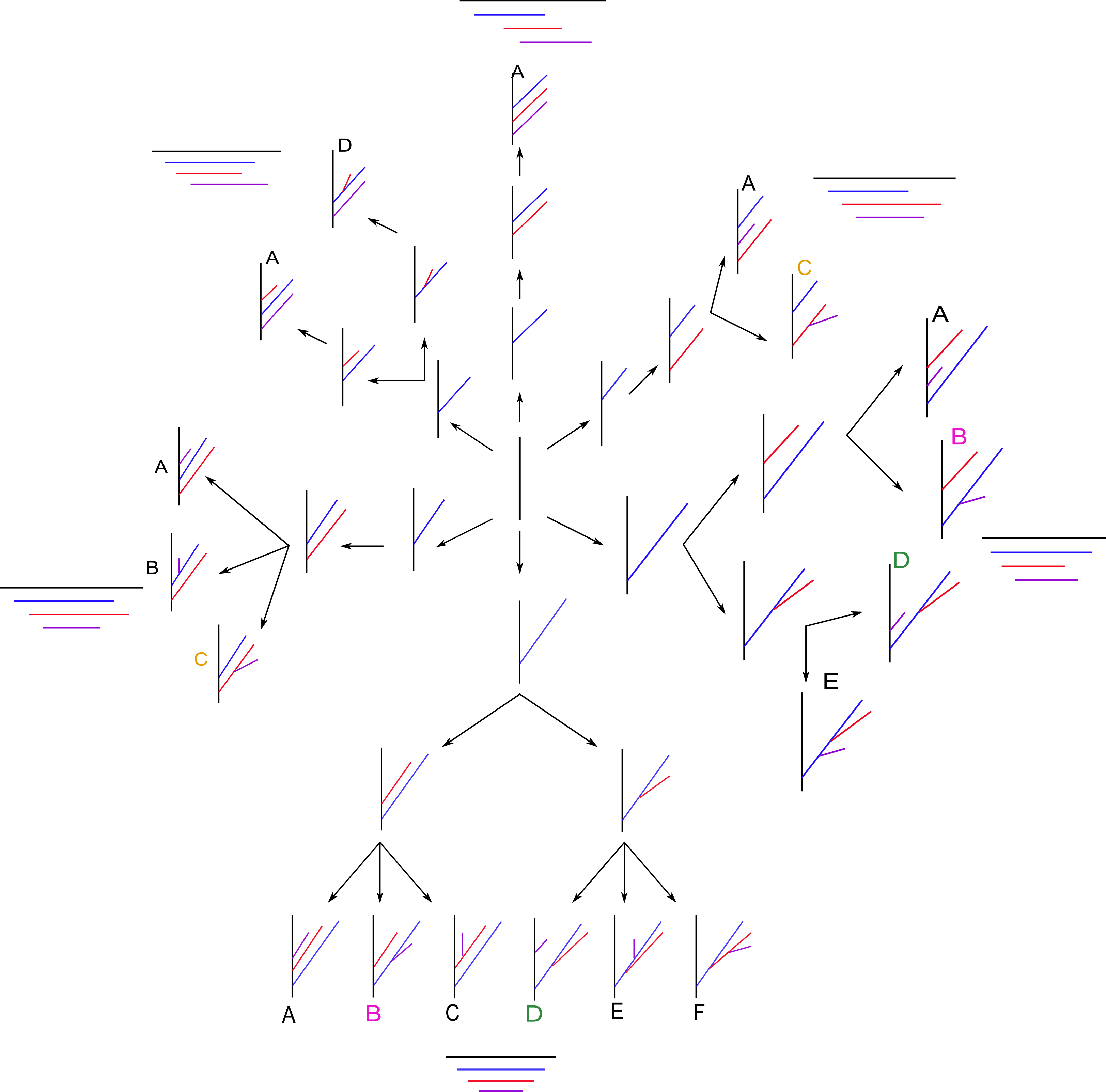}
    \caption{Recursive construction of all trees realising barcodes with three non-essential bars. At each bifurcation, the number of new branches corresponds to the index of the branch that is added. Each time we add a new branch, we multiply the number of possibilities by its index, illustrating the result of Proposition \ref{mtcount}.}
    \label{snowflake}
\end{figure}

\begin{example}
Consider the strict barcode $B =  \Big\{[0, \infty), [1, 8), [2, 7), [3, 6), [4, 5)\Big\}$. According to the formula in Proposition \ref{mtcount}, 
\[
R(B) = \prod_{j = 1}^4 \mu(I_j) = 1\cdot 2\cdot 3 \cdot4 = 4! .
\]
In general, if $B$ is a strict barcode with $n$ finite length half-open intervals such that $I_j \subset I_k$ for all $k < j$, then $R(B) = n!$.
\end{example}

\subsection{Relations to the Symmetric Group} \label{merge_tree_sym_group}

We begin by recalling the map from the set of strict barcodes with $n$ nonessential bars to the symmetric group on $n$ letters, which was introduced in \cite{TRN}. 

%The notation will be slightly different, as their main purpose was to use this bijection as a tool for classifying trees, and here we focus on the combinatorial and probabilistic aspects. 
\begin{remark}[Different Notations for Permutations]
There are several notational conventions for elements of the symmetric group.
When we use square brackets or boxes, e.g.,~the notation $[132]$, then we are listing the images of the ordered set $\{1,\ldots, n\}$ under the map $\sigma$, e.g., for $\sigma=[132]$, one can read off that $\sigma(1)=1$, $\sigma(2)=3$ and $\sigma(3)=2$.
We also use cycle notation, which describes the permutation in terms of its orbits and uses parentheses; fixed points are omitted in this notation. For our example, $\sigma=[132]$ can also be written as the elementary transposition $(23)$.
See Figure \ref{fig:symmetric-group-PDs}.
\end{remark}

\begin{definition}
Let $B=\{[b_i,d_i)\}_{i=1}^n\cup [b_0,\infty)$ be a strict barcode such that $b_1<...<b_n$. 
%If we fix the birth times, then there are $n!$ possible configurations of the bars, one for each ordering of death times $d_{i_1} <  \cdots < d_{i_n}$. 
The \emph{permutation type} $\sigma$ of the barcode $B$ is the automorphism $\sigma$ of $\{1,\ldots,n\}$ that maps birth order to death order.
In other words, if we re-index the death times using the natural order on $\R$ so that
$d_{i_1} <  \cdots < d_{i_n}$, the permutation $\sigma$ is $[i_1 i_2...i_n]$. In terms of the Elder rule, this \emph{associated permutation} comes from tracking which birth is paired with which death.
% The map $\sigma$ captures the information of the $j$-th birth being paired with the $k$-th death. We say that this permutation $\sigma$ is \emph{associated} to the barcode $B$. 
% This establishes a bijection between certain generic configurations of the bars in a barcode and elements of $S_n$.
\end{definition}

Notice that the essential bar $[b_0,\infty)$ does not play a role in the permutation type, as it always contains all the other bars in a strict barcode.

\begin{figure}[h]
    \centering
    \includegraphics[width =.5 \textwidth]{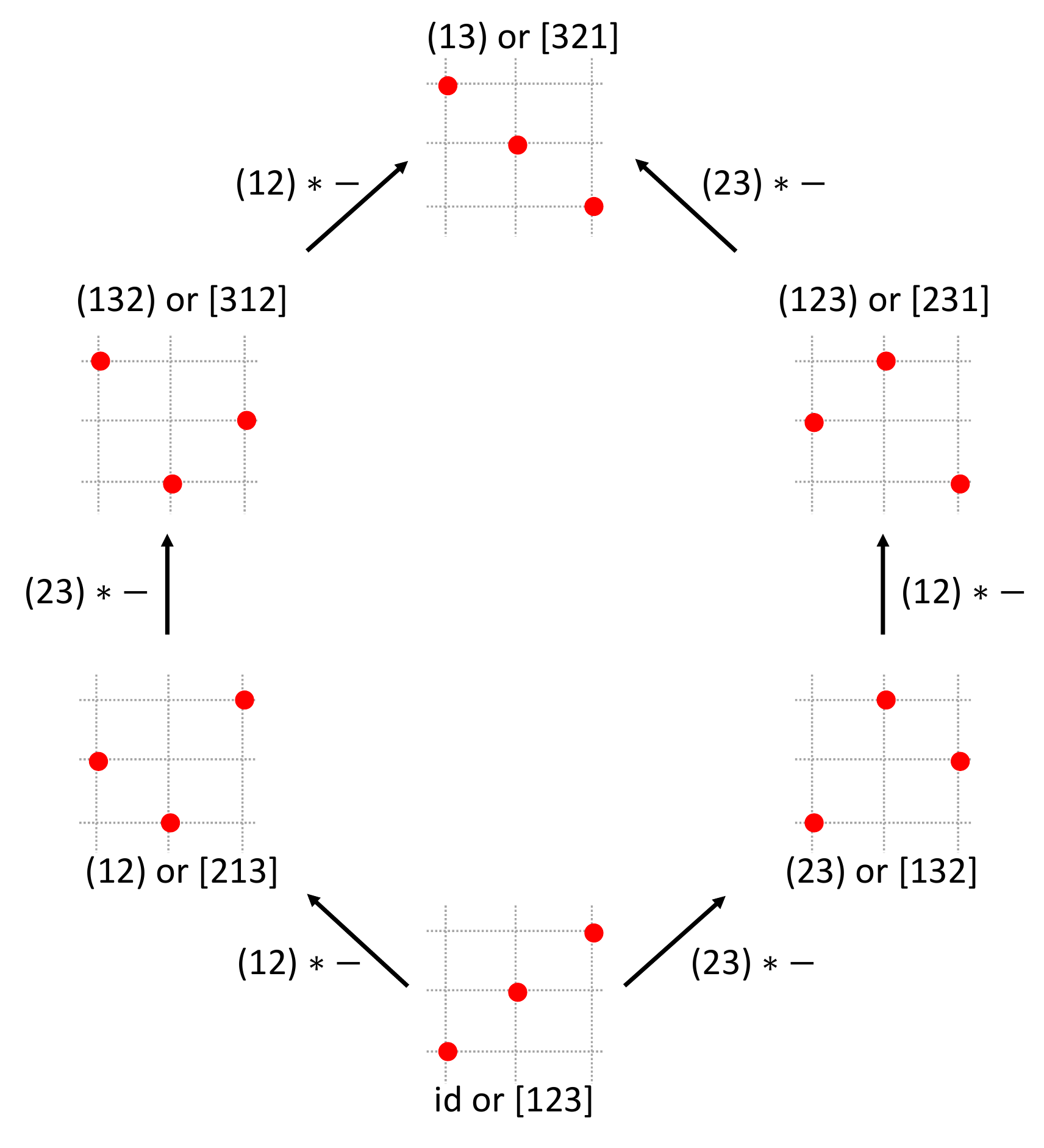}
    \caption{Combinatorial equivalence classes of persistence diagrams with three non-essential points. The associated permutation $\sigma$ is written next to each diagram in both forms of notation: the image notation is in square brackets, i.e,.~$[\sigma(1)\sigma(2)\sigma(3)]$, and the cycle notation  in parentheses. The arrows point in the direction of increasing left Bruhat order and exhibit $S_3$ as a poset. Notice that the permutation acts by switching death order.}
    \label{fig:symmetric-group-PDs}
\end{figure}

%Note that the ordering of the births is not necessary to define $\sigma$, it is simply more convenient to assume that the births are ordered.
The association of a permutation to each barcode defines an equivalence relation on the set of strict barcodes.

\begin{definition}\label{defn:combo-equiv-barcodes}
Let $B$ and $B'$ be two strict barcodes, each with $n$ non-essential bars, denoted $\{ [b_i,d_i) \}_{i = 1}^n$ and $\{ [b'_i,d'_i) \}_{i = 1}^n$, respectively. 
We say $B$ and $B'$ are \textit{combinatorially equivalent} if they have the same associated permutation.
\end{definition}

%Note that because the half infinite bar $[b_0,d_0=\infty)$ always contains all the others and has no finite death time, we do not take it into account and consider barcodes with $n+1$ bars for the bijection with the symmetric group $S_n$.

We can now express the relation between barcodes and the symmetric group more concisely as follows.
Let $\mathcal{B}_n$ denote the collection of strict barcodes with $n$ finite length half-open (non-essential) bars. 
The map that associates to every strict barcode its permutation type defines a bijection between combinatorial equivalence classes of strict barcodes and elements of the symmetric group, i.e.,
\[ 
\mathcal{B}_n /\sim  \qquad \longleftrightarrow \qquad  S_n.
\]

% This bijection is established by mapping each strict barcode with its associated permutation:
% \[ 
% B = \{[b_i, d_i)\}_{i = 0}^n \mapsto \{\sigma_B: i \mapsto \#\{j \mid d_j < d_i\}\}_{i = 1}^n .
% \] 
% Going forward, we will often not distinguish between a barcode $B$ and its associated permutation $\sigma$.

\begin{example} 
The space $\mathcal{B}_3 /\sim $ and the corresponding elements of $S_3$ of the bijection given above are displayed in Figure \ref{cayley_averagetree}C.
\end{example} 

\begin{remark}
As was done in Remark \ref{rmk_comb_MT} for combinatorial merge trees, one can identify the combinatorial equivalence classes of barcodes with elements of the symmetric group. What will be called a \emph{combinatorial barcode} in this paper is just the corresponding permutation in $S_n$.
\end{remark}

We conclude this section by clarifying the relationship between the two notions of combinatorial equivalence that are pertinent to the tree realization problem.

\begin{lemma}\label{lem:combo-trees-and-BCs}
If $T$ and $T'$ are combinatorially equivalent merge trees, then their corresponding barcodes $B$ and $B'$ are combinatorially equivalent as well.
\end{lemma}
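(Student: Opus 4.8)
The plan is to trace how the permutation type of a barcode is built from its merge tree via the Elder rule, and then to check that every ingredient of that construction is transported by a combinatorial equivalence $\varphi:T\to T'$. Recall that the permutation type records, for each leaf ordered by birth, the rank (among the internal nodes ordered by death) of the internal node at which that leaf's branch dies. Thus it suffices to establish two facts: first, that the Elder-rule pairing between leaves and internal nodes is determined by the graph structure together with the order of the leaves alone; and second, that $\varphi$ carries this pairing, together with the death ranks, from $T$ to $T'$. Since both $T$ and $T'$ are generic, their barcodes are strict and the permutation types are well defined.

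First I would give a purely combinatorial description of the Elder-rule pairing. For each node $v$ of $T$, let $m(v)$ denote the lowest leaf (in birth order) lying in the subtree below $v$. Proceeding by induction on subtree size, the branch surviving to $v$ is precisely the one born at $m(v)$. At an internal node $k$ with children $c_1$ and $c_2$, the two surviving branches entering $k$ are therefore born at $m(c_1)$ and $m(c_2)$, and by the Elder rule the one with the \emph{higher} birth dies at $k$ while the other survives, so that $m(k)$ is the lower of the two. Hence the branch dying at $k$ is born at whichever of $m(c_1),m(c_2)$ is later in birth order. This assignment depends only on the adjacency relation of $T$ and on the relative order of the leaf heights, never on the actual values $h(v)$.

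Next I would invoke combinatorial equivalence (Definition \ref{defn:combo-merge-tree}). Since $\varphi$ is a graph isomorphism it commutes with passing to children and subtrees, and since it preserves the order among leaves (condition (1)) it intertwines the functions, i.e.\ $\varphi(m(v))=m'(\varphi(v))$; in particular it sends the lowest leaf of $T$ to the lowest leaf of $T'$, so the essential bar corresponds and the remaining $n$ leaves are matched in order. Consequently $\varphi$ sends the Elder-rule pairing of $T$ to that of $T'$: if the branch born at leaf $l$ dies at internal node $k$ in $T$, then the branch born at $\varphi(l)$ dies at $\varphi(k)$ in $T'$. Because $\varphi$ respects birth order, $l$ and $\varphi(l)$ occupy the same birth rank; because $\varphi$ respects death order (condition (2)), the nodes $k$ and $\varphi(k)$ occupy the same death rank. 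Reading off the permutation from the pairing ``birth rank to death rank of the paired node'' then yields the same element of $S_n$ for $B$ and $B'$ (independently of which of the two standard conventions one uses), so $B$ and $B'$ are combinatorially equivalent.

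The main obstacle is the first step: making precise that the Elder-rule pairing is a function of adjacency and birth order only. The subtlety is that the Elder rule is phrased in terms of height values, so I must confirm that only their relative order enters — which is exactly what the recursive description via $m(v)$ secures, and which can be proved by induction on subtree size (or, alternatively, by reusing the recursive tree-construction in the sketch following Proposition \ref{mtcount}). Once that combinatorial reformulation is in hand, the transport along $\varphi$ is routine.
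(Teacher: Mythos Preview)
Your proposal is correct and follows essentially the same approach as the paper: reduce to showing that the Elder-rule pairing between leaves and internal nodes is preserved by $\varphi$, and then read off that the permutation types agree because $\varphi$ respects both birth and death orders. Your recursive description via $m(v)$ makes explicit what the paper's proof leaves as ``obvious'' from path-preservation under the graph isomorphism, but the underlying strategy is the same.
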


\begin{proof}
 Since tree isomorphisms as defined in Definition \ref{fig:combo-equiv-MTs} preserve both birth and death orders, we need to check only that if the Elder rule pairs the $i$-th birth node with the $j$-th death node in $T$, then the same holds for $T'$. This is obvious, however, because the unique sequence of edges connecting a pair  of nodes in $T$ must be sent to the same sequence of edges connecting these nodes in $T'$, since $\varphi$ is a graph isomorphism and therefore preserves adjacencyy relations.
\end{proof}

Figure \ref{fig_comb_trees_barcodes} illustrates the relationship between merge trees and their combinatorial equivalence classes and barcodes and their combinatorial equivalence classes, corresponding to permutations.

\begin{figure}
    \centering
    \includegraphics[width =.8 \textwidth]{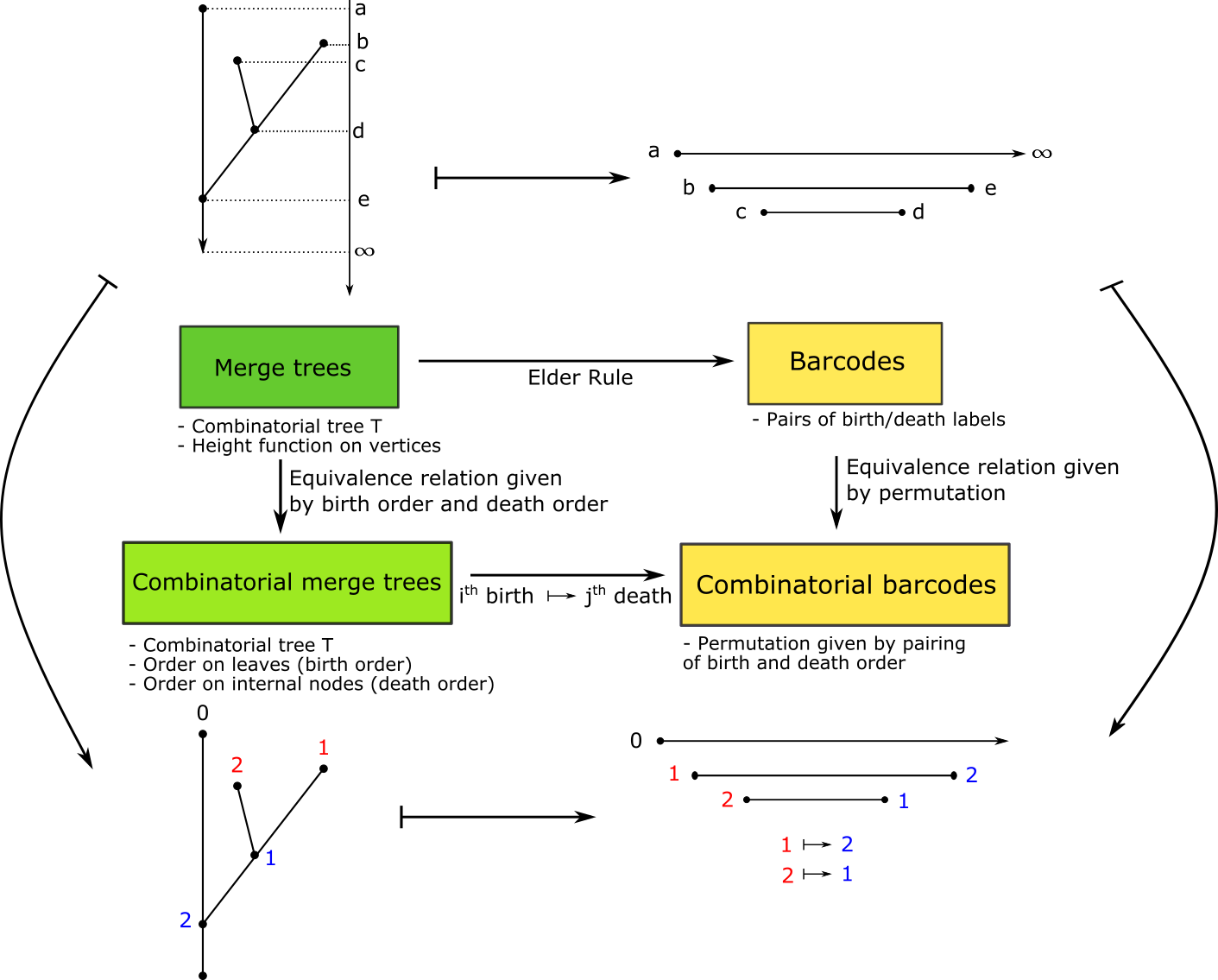}
    \caption{The relationships between merge trees, combinatorial equivalence classes of merge trees, barcodes and combinatorial barcodes. Birth labels are indicated in red, and death labels in blue. The largest bar (corresponding to the essential class) is not taken into account in the combinatorial setting since it is there for every tree/barcode. Therefore we label it by $0$.}
    \label{fig_comb_trees_barcodes}
\end{figure}

\section{Combinatorial and Algebraic Perspectives on the Realization Number}\label{sec:combo-alg-TRN}

Now that we have reviewed the basic notions of trees, merge trees, their barcodes, and prior results on the inverse problem detailed in \cite{curry2017fiber} and \cite{TRN}, we are in a position to extend those results.
The first observation of this section is that the tree realization number (TRN) of a barcode is simply the product of the entries of the left inversion vector for the permutation associated to a barcode. 
This is somewhat surprising, as the left inversion vector is a classical object of study, but typically authors study the sum of its entries rather than the product.
This observation also allows us to characterize those barcodes that have a larger tree realization number in the language of geometric group theory: permutations that have longer word length in the left Bruhat order have higher TRN.
Based on a convexity result for combinatorial equivalence classes of barcodes, we also provide a closed form expression for the sum of TRNs across all elements of the symmetric group, which is equal to the number of maximal chains in the lattice of partitions.
This result is of use in the next section, when we consider probability distributions on the space of barcodes and calculate the expected tree realization number for the uniform distribution on the symmetric group.

\subsection{The Realization Number and the Left Inversion Vector} \label{sec_real_number}

%It turns out that the realization number of a barcode has a natural combinatorial interpretation in the language of permutations.
Careful inspection of the formula for the tree realization number in Proposition \ref{mtcount} reveals that the index of a bar $[b_i,d_i)$ in a barcode $B$ is given by the number of bars born before $b_i$ and that die after $d_i$. 
Thinking in terms of the permutation associated to a barcode, this index counts the number of ``upsets'' of birth-mapping-to-death order. 
%Consequently, it seems natural to look to \emph{inversions}, which give similar information about a permutation. 
More precisely, for a permutation $\sigma$ of $\{1,\ldots,n\}$ if $i < j$ and $ \sigma(i) > \sigma(j)$, then either the pair of places $(i,j)$ or the pair of elements $(\sigma(i),\sigma(j))$ is called an \textit{inversion} of $\sigma$---the usual order $i < j$ has been ``upset'' or inverted here.
We now modify the usual notion of an inversion vector so that it is defined for strict barcodes and makes our theorem statements as tidy as possible.

\begin{definition}\label{defn:left-inversion-vector}
Let $B=[b_0,\infty) \cup \{[b_i,d_i)\}_{i=1}^n$ be a strict barcode with $b_i < b_j$ for $i< j$.
The \textit{left inversion vector} of $B$ is the $n$-vector $l(B)$ whose $i$-th coordinate is 
\[
l_i(B):=\#|\{j \leq i \mid d_j \geq d_i\}|.
\]
We note that for this formula the index $j=0$ is used for computation although it is not given a position in the $n$-vector $l(B)$, since the vector would have length $n+1$.
When we calculate the left inversion vector of a permutation $\sigma$ associated to a barcode, we use the slightly modified definition
\[
l_i(\sigma):=\#|\{j \leq i \mid \sigma(j) \geq \sigma(i)\}|
\]
in order to make sure that $l(\sigma)=l(B)$.
\end{definition}

%\begin{remark}
%We note that the above definition differs from the usual definition of the left inversion vector associated to a permutation $\sigma$ of $\{1,\ldots,n\}$, which is 
%\[
%\ell_i(\sigma):= \#|\{j < i \mid \sigma(j) > \sigma(i)\}|,\]
%because every entry has the value $1$ added to it.
%This added value of $1$ comes from the fact that that the Elder rule dictates that the first thing born is the essential class and never dies.
%\end{remark}

 \begin{figure}
     \centering
     \includegraphics[width=\textwidth]{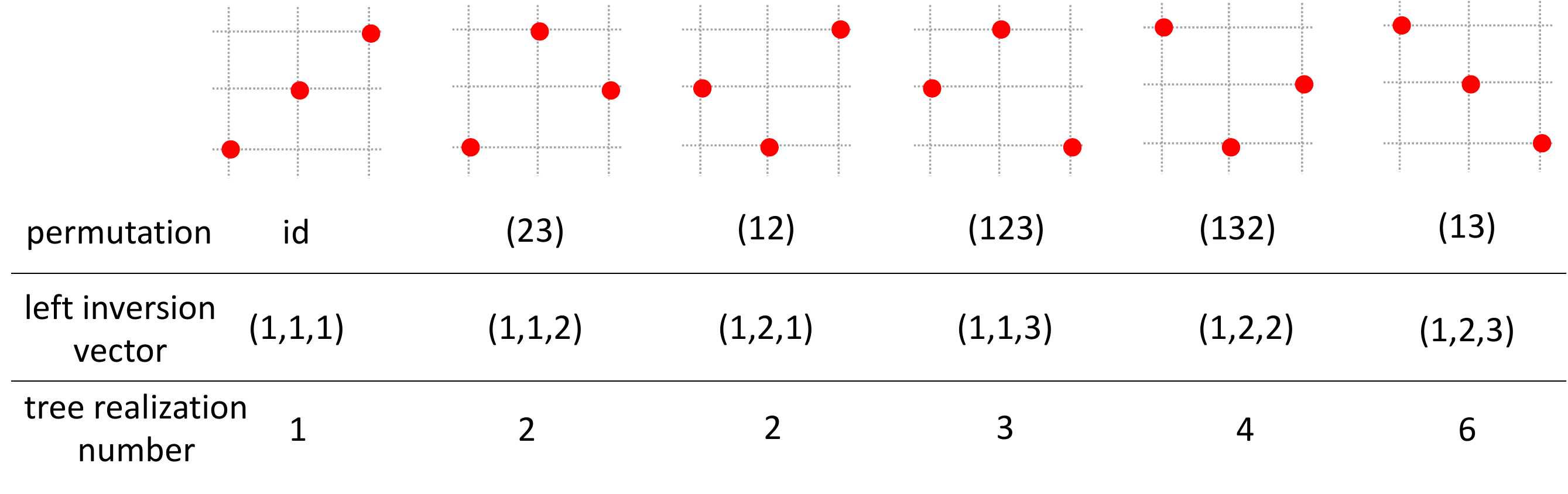}
     \caption{Persistence diagrams associated to the six elements of $S_3$, along with their inversion vector and tree realization number.}
     \label{fig:PD-left-inversion-vector}
 \end{figure}

\begin{example}
One can easily compute the left inversion vector of the following barcode with one essential class and four non-essential classes: 
\[
B=\{[0,\infty), [1, 7), [2, 6), [3, 5), [4, 8) \} \quad \Rightarrow \quad l(B) = (1,2,3,1).
\]
The permutation associated to this barcode is $\sigma = (3 2 1 4)$ because the first non-essential feature dies third, the second feature dies second, the third feature dies first and the fourth feature dies fourth.
Clearly, $l(\sigma) = (1, 2, 3, 1)$ as well.
\end{example}

\begin{example}
For the left inversion vectors associated to the six elements of $S_3$, along with their tree realization numbers, see Figure~\ref{fig:PD-left-inversion-vector}.
\end{example}

To define coordinates on the space of left inversion vectors, we use the  the totally ordered sets
\[
    [k]:=\{1 < 2 < \cdots < k\}
\]
for $k$ a positive natural number.
It is easy to see that the left inversion vector construction establishes a bijective correspondence between $S_n$ and the Cartesian product of sets of the above form, i.e., there is a bijection
\[
    l: S_n \longrightarrow [1]\times[2]\times \cdots \times[n-1]\times[n] \qquad \text{where} \qquad \sigma \mapsto l(\sigma).
\]
The next lemma, which is crucial for the rest of the paper, follows immediately from this observation. It was first established in \cite{TRN}, though not formulated explicitly in terms of the left inversion vector.

\begin{lemma}\label{lem:TRN-left-inversion}
If $B$ is a strict barcode with one essential bar $[b_0,\infty)$ and $n$ non-essential bars $\{[b_i,d_i)\}_{i=1}^n$, then
\[ 
R(B) = \prod_{i = 1}^n l_i(\sigma(B)).
\]
\end{lemma}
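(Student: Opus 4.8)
The plan is to deduce the formula directly from Proposition \ref{mtcount} by showing that the index $\mu(I_i)$ of each non-essential bar equals the $i$-th coordinate $l_i(\sigma(B))$ of the left inversion vector. Since Proposition \ref{mtcount} already gives $R(B) = \prod_{i=1}^n \mu(I_i)$, the entire content of the lemma is the coordinatewise identity $\mu(I_i) = l_i(\sigma(B))$, after which one simply takes the product over $i$.

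First I would unwind the index. By definition $\mu(I_i)$ is the number of bars of $B$, the essential bar $I_0 = [b_0,\infty)$ included, that strictly contain $I_i = [b_i,d_i)$, and $I_k \supsetneq I_i$ exactly when $b_k \le b_i$ and $d_k \ge d_i$ with $k \ne i$. Because $B$ is strict, the births are totally ordered as $b_0 < b_1 < \cdots < b_n$ and the deaths are pairwise distinct; hence for $k \ne i$ the condition $b_k \le b_i$ forces $k < i$, and the containment then reduces to $d_k > d_i$. The essential bar always qualifies, as $b_0 < b_i$ and $\infty > d_i$. This yields
\[
\mu(I_i) = 1 + \#\{\, k : 1 \le k < i,\ d_k > d_i \,\},
\]
where the leading $1$ records the essential bar.

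Next I would match this to the inversion coordinate. By Definition \ref{defn:left-inversion-vector}, $l_i(B)$ counts the indices $j \le i$ with $d_j \ge d_i$; the self-term $j = i$ contributes $1$ and each earlier $j$ with $d_j > d_i$ contributes $1$, so $l_i(B) = 1 + \#\{\, j : 1 \le j < i,\ d_j > d_i \,\}$, matching the display above. Since Definition \ref{defn:left-inversion-vector} is also arranged so that $l(B) = l(\sigma(B))$ for the permutation type $\sigma(B)$, we conclude $\mu(I_i) = l_i(B) = l_i(\sigma(B))$ for each $i$, and taking the product over $i = 1, \ldots, n$ gives $R(B) = \prod_{i=1}^n l_i(\sigma(B))$.

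I do not expect a genuine obstacle; the single point requiring care is the bookkeeping of the guaranteed $+1$, namely that the unique essential bar and the trivial self-containment $d_i \ge d_i$ furnish the same one unit and so must not be counted twice. Strictness of the barcode (distinct births and distinct deaths) is what lets the containment condition collapse cleanly to the inequality $d_k > d_i$ among earlier-born bars, and it is the only hypothesis doing real work.
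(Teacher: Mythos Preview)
Your proposal is correct and matches the paper's approach: the paper states the lemma ``follows immediately'' from Proposition~\ref{mtcount} and Definition~\ref{defn:left-inversion-vector} without spelling out the details, and what you have written is precisely the natural unpacking of that claim, namely the coordinatewise identity $\mu(I_i)=l_i(B)$. Your care with the $+1$ bookkeeping (essential bar on the $\mu$ side versus the self-term $j=i$ on the $l_i$ side) is exactly the only thing to check.
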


An immediate consequence of this lemma is that if $B$ and $B'$ are combinatorially equivalent barcodes, in the sense of Definition \ref{defn:combo-equiv-barcodes}, then their realization numbers are the same. It follows that the tree realization number induces a function on the symmetric group, i.e.,
\[
R: S_n \to \N : \sigma \mapsto \prod_{i = 1}^n l_i(\sigma).
\]
Before analyzing this function on the symmetric group, we identify some interesting properties of the set of barcodes under the combinatorial equivalence relation, to prepare our exploration of the combinatorics of the TRN in earnest in subsequent sections.
%%%%%%%%%%%%%%%%%%%%%%%%%%%%%%%%%%%%%%%%%%%%%%%%%%%%%%%%%%%%%%

\subsection{Convexity of Combinatorial Equivalence Classes} \label{sec_continuous_path}

In this section we prove that combinatorial equivalence classes are convex in a certain sense: if two strict barcodes $B$ and $B'$ are of the same combinatorial type, then they can be connected by a ``line segment'' of barcodes\footnote{A continuous path of barcodes is sometimes called a \emph{vineyard}. This terminology arises more commonly when barcodes are represented using persistence diagrams, as this path traces out a configuration of points in the plane, with points appearing and disappearing out of the diagonal.} all of the same permutation type. 
%Moreover this path has the structure of a straight line homotopy, which implies that the equivalence classes of barcodes are connected and convex.

We prove first that the set $\mathcal B_n$ admits the algebraic structure necessary to formulate a convexity result.  

\begin{lemma}\label{lemma_Bn_module} 
\begin{enumerate}
    \item For all $\lambda \in \R_{>0}$ and $B=\{[b_0,\infty)\}\cup \{[b_i,d_i)\}_{i=1}^n\in \mathcal B_n$, the set $$\lambda B:=\{[\lambda b_0,\infty)\}\cup \{[\lambda b_i,\lambda d_i)\}_{i=1}^n$$ is also a strict barcode.
    \item For all $B=\{[b_0,\infty)\}\cup \{[b_i,d_i)\}_{i=1}^n,B'=\{[b'_0,\infty)\}\cup \{[b'_i,d'_i)\}_{i=1}^n \in \mathcal B_n$, the set $$B+B':=\{[b_0+b_0',\infty)\}\cup \{[b_i+b_i',d_i+d_i')\}_{i=1}^n$$ is also a barcode with distinct birth times, which is strict if $B$ and $B'$ have the same permutation type.
\end{enumerate}
\end{lemma}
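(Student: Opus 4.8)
The plan is to check both closure properties directly against the definition of a strict barcode --- births $b_0 < \cdots < b_n$, pairwise distinct deaths, and nondegenerate bars $b_i < d_i$. Part (1) is immediate: multiplication by a fixed $\lambda \in \R_{>0}$ is a strictly increasing bijection of $\R$, so applying it to every endpoint preserves all three conditions simultaneously. Indeed $\lambda b_0 < \cdots < \lambda b_n$ follows from $b_0 < \cdots < b_n$, the relation $\lambda b_i < \lambda d_i$ follows from $b_i < d_i$, and $\lambda d_i = \lambda d_j$ would force $d_i = d_j$, so the scaled deaths remain distinct. Hence $\lambda B \in \mathcal B_n$.

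For part (2) I would separate the unconditional claim (distinct births) from the conditional one (strictness). The births of $B+B'$ are the termwise sums $b_i + b'_i$; since both $(b_i)_i$ and $(b'_i)_i$ are strictly increasing, so is their sum, giving $b_0 + b'_0 < \cdots < b_n + b'_n$, which are in particular distinct, while nondegeneracy $b_i + b'_i < d_i + d'_i$ follows by adding $b_i < d_i$ to $b'_i < d'_i$. Thus $B+B'$ is always a barcode with distinct births, independent of any hypothesis.

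The hypothesis is needed only to show the deaths $d_i + d'_i$ are pairwise distinct. Equal permutation type means $B$ and $B'$ impose the same relative order on death indices, i.e.\ $d_i < d_j \iff d'_i < d'_j$ for all $i \neq j$. Given $i \neq j$, strictness of $B$ lets me assume $d_i < d_j$, so $d'_i < d'_j$ and hence $d_i + d'_i < d_j + d'_j$; the reverse case is symmetric. The sums are therefore distinct, and since their order matches that of $B$, the barcode $B+B'$ even inherits the common permutation type $\sigma$, so it is strict.

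I expect the only real subtlety --- the main obstacle --- to be the careful use of the permutation-type hypothesis in this last step: it must be invoked as ``same relative order of deaths'' rather than ``same death values,'' and one should note that dropping it would permit an order-reversing pair to produce $d_i + d'_i = d_j + d'_j$, which is precisely the failure of strictness that the conditional phrasing of the lemma guards against.
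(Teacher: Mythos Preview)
Your proof is correct and follows essentially the same approach as the paper: part (1) is dispatched via order-preservation of positive scaling, and for part (2) the only nontrivial point is distinctness of the summed death times, which you handle exactly as the paper does by invoking the shared death order implied by equal permutation type. Your write-up is simply more detailed than the paper's terse version, and your extra remark that $B+B'$ inherits the common permutation type is correct and in fact anticipates what is needed in the subsequent convexity lemma.
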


\begin{proof}
The proof of (1) is trivial, since $\lambda$ is assumed to be positive, whence multiplication by $\lambda$ preserves the order of real numbers.

The only subtlety in the proof of (2) concerns distinct death times.  If the permutation types of $B$ and $B'$ are different, it could happen that $d_i<d_j$ and $d'_i>d_j'$, but $d_i+d'_i=d_j+d'_j$, so that $B+B'$ would not be strict.  If they have the same permutation type, then this cannot happen. 
\end{proof}

\begin{lemma}\label{lemma_cont_path} For every $n$ and every $\sigma \in S_n$, the set of strict barcodes of permutation type $\sigma$ is convex, i.e.,  for $B$ and $B'$ of permutation type $\sigma$, the interval
$$[B,B']:=\{ tB+(1-t)B'\mid t\in [0,1]\}$$
is contained in the set of barcodes of permutation type $\sigma$.
\end{lemma}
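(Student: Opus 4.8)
The plan is to reduce the statement to a direct comparison of birth and death orders, using the module structure on $\mathcal{B}_n$ supplied by Lemma \ref{lemma_Bn_module}. First I would dispose of the endpoints $t\in\{0,1\}$, where $tB+(1-t)B'$ is literally $B'$ or $B$ and is therefore of type $\sigma$ by hypothesis. It then suffices to treat $t\in(0,1)$, where both coefficients $t$ and $1-t$ are strictly positive; this is exactly the regime in which the scaling and addition of Lemma \ref{lemma_Bn_module} apply, so $C:=tB+(1-t)B'$ is a well-defined element of $\mathcal{B}_n$, and it is strict precisely because $B$ and $B'$ share the permutation type $\sigma$ (this is the content of Lemma \ref{lemma_Bn_module}(2)). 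The remaining task is to upgrade strictness to the stronger claim that $C$ has permutation type $\sigma$.

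Writing $b^C_i=tb_i+(1-t)b'_i$ and $d^C_i=td_i+(1-t)d'_i$ for the births and deaths of $C$, the first step is to verify that $C$ retains the birth order $b^C_1<\cdots<b^C_n$. Since $b_i<b_j$ and $b'_i<b'_j$ whenever $i<j$, and both coefficients are positive, each convex combination is strictly increasing in $i$, so $C$ indexes its bars in the same birth order that is used to define the permutation type.

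The key step is to show that the death order of $C$ agrees with that of $B$ and $B'$. Because $B$ and $B'$ have the same permutation type, for every pair $i,j$ we have $d_i<d_j$ if and only if $d'_i<d'_j$. Fixing such a pair with $d_i<d_j$, positivity of $t$ and $1-t$ gives $td_i<td_j$ and $(1-t)d'_i<(1-t)d'_j$, and summing yields $d^C_i<d^C_j$; the reverse comparison is symmetric. Hence the relative order of the deaths of $C$ coincides with that of $B$, so re-indexing the $d^C_i$ in increasing order produces the same sequence $[i_1\cdots i_n]$, which means $C$ has permutation type $\sigma$. As a byproduct this re-proves strictness, since these strict inequalities rule out coincident deaths.

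I expect no serious obstacle here: once the module operations of Lemma \ref{lemma_Bn_module} are in place, the argument is essentially immediate. The only point deserving care—and the one already isolated in the proof of Lemma \ref{lemma_Bn_module}(2)—is that the same-permutation-type hypothesis is exactly what forbids the two death orders from disagreeing on some pair $(i,j)$. Were they to disagree, a convex combination could create either a tie (violating strictness) or a reversal (changing the permutation type); the shared order among all pairs is precisely what propagates linearly to $C$ and pins its type to $\sigma$.
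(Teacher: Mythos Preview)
Your proof is correct and follows essentially the same approach as the paper: invoke Lemma~\ref{lemma_Bn_module} to guarantee that $tB+(1-t)B'$ is a strict barcode, then observe that strict inequalities among births and among deaths are preserved under positive combinations, so the permutation type is unchanged. You simply spell out in more detail (handling endpoints, checking birth order explicitly) what the paper compresses into the single implication $d_i<d_j$ and $d_i'<d_j' \Rightarrow d_i+d_i'<d_j+d_j'$.
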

 
\begin{proof}
Given the previous lemma, it remains only to prove that the permutation type of $tB+(1-t)B'$ is $\sigma$, which follows immediately from the observation that 
$$d_i<d_j \text { and } d_i'<d_j' \Longrightarrow d_i+d'_i < d_j + d'_j.$$
\end{proof}

\begin{remark}\label{remark_path_barcodes}
We can also formulate the lemma above as saying that there is a ``straight-line path" from $B$ to $B'$,
$$ \overline{BB'}:[0,1] \to \mathcal B_n: t \mapsto B^t:= tB+(1-t)B'.$$
It is not hard to show that this function is indeed continuous with respect to both the bottleneck metric and the Wasserstein metric on $\mathcal B_n$, but we choose not to do so here, to avoid introducing further definitions outside of the focus of this paper.

It is interesting also to consider the path $\overline{BB'}$ when the barcodes $B$ and $B'$ are not of the same permutation type. As mentioned in the proof of Lemma \ref{lemma_Bn_module}, not every point of $\overline{BB'}$ is necessarily a strict barcode in this case, which allows the path to move from one permutation type to another.
One can show that the smallest number of different classes that the path goes through is the length of the shortest path between the two corresponding permutations of $B$ and $B'$ on the Cayley graph defined using the generating set of elementary (neighboring) transpositions $\tau_i=(i,i+1)$. 
This value is related to the Bruhat order, which we introduce in the next section.
A fuller description would involve describing the space of barcodes in terms of a family of convex sets that fiber over the permutohedron. 
We leave this for future work.
\end{remark}
 
 \begin{example}
 Figure \ref{fig_path_diagrams} shows an example of the path described in the proof above, using the representation of barcodes as persistence diagrams. The path consists of the straight lines between the matched points of the diagrams. Note that the dotted lines indicating the births and deaths never cross for the same birth and death order, respectively, because the barcodes stay in the same permutation class at each step of the path. 
It \emph{is} possible for $b_1$ to be greater than $b_2'$, for example, but the relative order of births and deaths does not change.

 \begin{figure}
     \centering
     \includegraphics[scale=0.8]{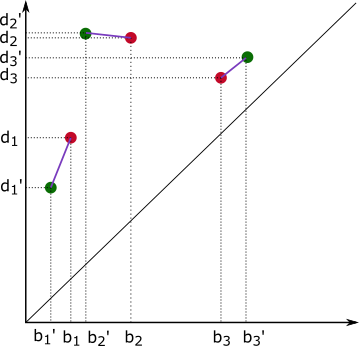}
     \caption{Continuous path between two barcodes in the same combinatorial class. We show the persistence diagrams of each barcode. The first one $B$ is indicated by red dots and the second one by green dots, and the path $\mathcal{B}^t$ is in purple. }
     \label{fig_path_diagrams}
 \end{figure}
 \end{example}

Lemma \ref{lemma_cont_path}  allows us to fix a standardized representative of each combinatorial barcode type, making  the connection to the symmetric group explicit. 

\begin{definition}\label{defn:standard-form-BC}
A barode $B$ is in \emph{standard form} if there is a permutation $\sigma$ of the set $\{1,\ldots,n\}$ so that
\[
B= \{[i,\sigma(i)+n) \}_{i = 1,...n} \cup \{[0,\infty)\}.
\]
It is clear that $B$ is strict and has permutation type $\sigma$.
We sometimes write $B(\sigma)$ for the standard barcode associated to $\sigma$.
\end{definition}

Lemma \ref{lemma_cont_path} implies that any strict barcode $B$ of permutation type $\sigma$  can be connected via a straight-line path  to the barcode $B(\sigma)$.

%%%%%%%%%%%%%%%%%%%%%%%%%%%%%%%%%%%%%%%%%%%%%%%%%%%%%%%%%%%%%%    
\subsection{Tree Realization Number Preserves Bruhat Order}\label{sec:Bruhat}

It is interesting to study both the tree realization number from a combinatorial point of view via the symmetric group and the symmetric group from a ``barcode'' point of view via the realization number. 
To our knowledge, the product of the components of the left inversion vector is not a very commonly used statistic on symmetric groups, so we take this opportunity to study some of its properties. 

Observe first that two adjacent permutations in the Cayley graph (i.e., two permutations that differ by left multiplication by one elementary transposition $\tau_i=(i,i+1)$) never have the same realization number.  This follows easily from the definition. 
As a consequence, the realization number is locally injective, although it is not globally injective, since barcodes of type $(12)$ and type $(23)$ have the same TRN. 
%close barcodes in terms of permutation distance (path-distance on the Cayley graph) do not have similar realization number. However, it is not necessarily a bad consequence. It means that the realization number can be used to differentiate barcodes that can be close to each other, as was done in \cite{TRN} to study the transposition stability. 
In this section we extend this local injectivity observation, proving that the TRN defines an order-preserving map from the symmetric group to the natural numbers, when the symmetric group is equipped with the appropriate Bruhat order.

Recall that the symmetric group is generated by elementary transpositions $\tau_i:= (i,i+1)$. 
This implies that any element of $S_n$ can be represented using a word made using the alphabet $\mathcal{A} = \{(i,i+1)\}_{i=1}^{n-1}$, although that representation need not be unique. A word representing a certain permutation is \emph{reduced} if it is of minimal length.  The \emph{length} of a permutation is the minimal length of a word representing the permutation.

\begin{definition}[Left Bruhat Order]\label{defn:left-Bruhat}
The \emph{left Bruhat order} is a partial order on $S_n$, specified as follows.
If $\sigma, \sigma' \in S_n$, then $\sigma < \sigma'$ if the length of $\sigma$ is less than that of $\sigma'$, and there exist $\tau_{i_1}, ...,\tau_{i_k}\in \mathcal A$ such that $\sigma' = \tau_{i_1}\cdots \tau_{i_k}\sigma$.
%The \emph{strong Bruhat order} is defined as the reflexive and transitive closure of the relation $\sigma <_S (i, j) \sigma$ for pairs $i <j$.
\end{definition}

%\textbf{ACHTUNG! THERE ARE TWO NOTIONS OF BRUHAT ORDER HERE, BUT BOTH SEEM TO BE PRESERVED BY TRN!}

\begin{example}
% In $S_3$ we note that $(123) > (13)$ under the left Bruhat order because $(13) = (23)(123)$, where we use cycle notation and where composition is read from right to left.
In $S_3$ we note that $(123) > (23)$ under the left Bruhat order because $(123) = (12)(23)$, where we use cycle notation and where composition is read from right to left.
% This should be contrasted with the strong Bruhat order which has the extra relation $(123) >_S (23)$ because $(123)$ can also be written as $(13)(23)$.
% This extra covering relation comes from using the non-adjacent transposition $(13)$.
In the left Bruhat order $(123)$ and $(12)$ are not comparable; see Figure \ref{fig:symmetric-group-PDs}.
\end{example}

The next lemma shows that the realization number increases with increasing  left Bruhat order. We remark that this lemma can be viewed as a consequence of a classical result, which is mentioned in~\cite{Bruhat-Order-Sn}: if $\sigma < \sigma'$, then the number of inversions in $\sigma'$ is greater than the number of inversions in $\sigma$.

\begin{lemma}
If $\sigma, \sigma' \in S_n$ are such that $\sigma < \sigma'$ in the left Bruhat order, then $R(\sigma) < R(\sigma')$.
\end{lemma}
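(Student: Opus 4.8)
The plan is to work entirely with the left inversion vector, exploiting the product formula $R(\sigma)=\prod_{i=1}^{n} l_i(\sigma)$ of Lemma~\ref{lem:TRN-left-inversion}. Since every coordinate satisfies $l_i(\sigma)\geq 1$ (the index $j=i$ always contributes to the count in Definition~\ref{defn:left-inversion-vector}), it is enough to show that passing from $\sigma$ to a larger element $\sigma'$ never decreases any coordinate of the left inversion vector and strictly increases at least one. Because the left Bruhat order of Definition~\ref{defn:left-Bruhat} is generated by its covering relations $\sigma\lessdot\tau_i\sigma$ (those left multiplications by an elementary transposition that raise the length by exactly one), transitivity reduces the whole statement to a single covering step: it suffices to show that if $\sigma'=\tau_i\sigma$ with $\ell(\sigma')=\ell(\sigma)+1$, then $R(\sigma')>R(\sigma)$.

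For such a step I would first record the effect of left multiplication by $\tau_i=(i,i+1)$ on one-line notation: it swaps the two \emph{values} $i$ and $i+1$, so it changes only the entries in positions $a:=\sigma^{-1}(i)$ and $b:=\sigma^{-1}(i+1)$, and the length-raising hypothesis is exactly the condition $a<b$. The key claim is then that the left inversion vector is altered in a single coordinate:
\[
l_b(\sigma')=l_b(\sigma)+1, \qquad l_k(\sigma')=l_k(\sigma)\ \text{for all } k\neq b.
\]
To prove the claim I would split on the position of $k$ relative to $a$ and $b$. For $k<a$ the prefix $\sigma(1),\dots,\sigma(k)$ and the value $\sigma(k)$ are untouched, so $l_k$ is unchanged. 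For $a<k<b$ only the entry at $a$ changed, from $i$ to $i+1$; since $\sigma(k)\notin\{i,i+1\}$ in this range, the comparison ``$\geq\sigma(k)$'' returns the same truth value at $j=a$ before and after, so $l_k$ is unchanged; the boundary case $k=a$ is similar, as the threshold moves from $i$ to $i+1$ together with the diagonal entry while the earlier entries avoid $\{i,i+1\}$. For $k>b$ both swapped entries lie in the prefix, so the \emph{set} of prefix values is unchanged and $\sigma(k)$ is unchanged, leaving $l_k$ fixed. The only genuine change occurs at $k=b$: the prefix value-set is again unchanged, but the threshold drops from $\sigma(b)=i+1$ to $\sigma'(b)=i$, and since the prefix contains the value $i$, exactly one additional value now meets the bound, giving $l_b(\sigma')=l_b(\sigma)+1$.

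Granting the claim, the conclusion is immediate: $R(\sigma')=R(\sigma)\cdot\frac{l_b(\sigma)+1}{l_b(\sigma)}>R(\sigma)$ because $l_b(\sigma)\geq 1$. The main obstacle is the bookkeeping in the case analysis above, in particular verifying that no value equal to $i$ or $i+1$ slips into the relevant prefixes for $k<b$ and that the prefix value-set is genuinely preserved for $k\geq b$; getting the boundary cases $k=a$ and $k=b$ right is where care is needed. As a consistency check, summing the identities above recovers the classical fact cited from \cite{Bruhat-Order-Sn} that a single cover raises the inversion count by one, since $\sum_k\big(l_k(\sigma)-1\big)=\ell(\sigma)$; this also shows that the coordinatewise monotonicity promised in the first paragraph propagates along arbitrary saturated chains, so the same argument yields the strict inequality for every comparable pair, not only for covers.
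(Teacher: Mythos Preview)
Your proposal is correct and follows essentially the same strategy as the paper: reduce to a single covering step $\sigma'=\tau_i\sigma$, show that exactly one coordinate of the left inversion vector increases by $1$, and then conclude by induction along a saturated chain. The paper obtains the covering-step identity by invoking Proposition~3.5 of \cite{TRN}, whereas you prove it directly via the case analysis on the position $k$; your version is therefore self-contained, and in fact your identification of the changed coordinate as $l_b$ with $b=\sigma^{-1}(i+1)$ is the correct one (the paper's displayed index $i+1$ is a slip unless a different convention is intended).
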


\begin{proof}
Since $\sigma < \sigma'$, there exist $\tau_{i_1}, ...,\tau_{i_k}\in \mathcal A$ such that $\sigma' = \tau_{i_1}\cdots \tau_{i_k}\sigma$. If $k=1$, so that $\sigma$ and $\sigma'$ are adjacent on the Cayley graph, i.e., $\sigma'=\tau_i \sigma$ for some $i$. 
By assumption, the length of $\sigma'$ is greater than that of $\sigma$. 
%Therefore, $\sigma'(i+1) > \sigma'(i)$ while $\sigma(i+1)< \sigma(i).$ 

Translating Proposition 3.5 in \cite{TRN} into the language of permutations, we deduce that 
\[ 
R(\sigma') = \frac{R(\sigma) (l_{i+1}(\sigma) + 1)}{l_{i+1}(\sigma)} > R(\sigma).
\]
The result now follows by induction on the number of transpositions $\tau_i$. 
\end{proof}

\begin{example}
One can see the Cayley graph of $S_4$ in Figure \ref{fig_cayley_s4}. Notice that two permutations $\sigma, \sigma'$ satisfy $\sigma < \sigma'$ in the Bruhat order if and only if the shortest path from $\sigma'$ to the identity contains the shortest path from $\sigma$ to the identity. The realization number increases along such paths.

\begin{figure}
    \centering
    \includegraphics[scale=0.6]{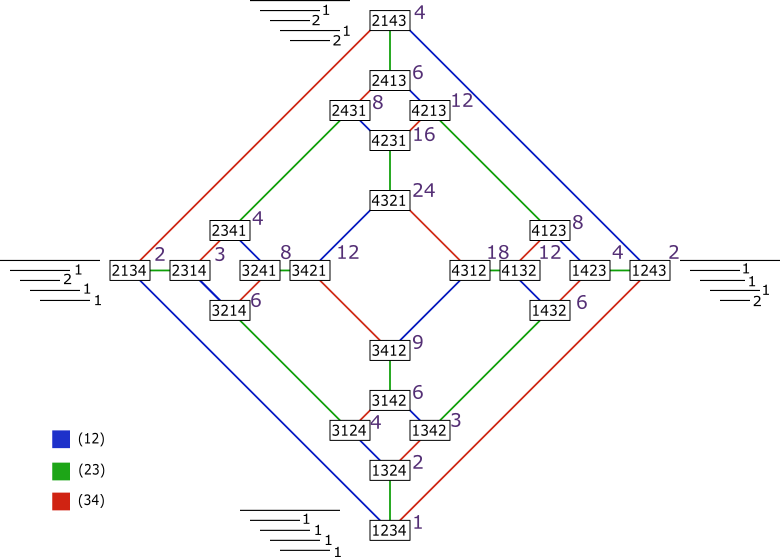}
    \caption{The Cayley graph of the symmetric group $S_4$ and some of the corresponding barcodes. We indicate the tree realization number in purple next to each vertex. }
    \label{fig_cayley_s4}
\end{figure}
\end{example}

\begin{remark}
It is interesting to consider the TRN as a discrete Morse function \cite{FORMAN199890} on the order complex of $S_n$. We note that the TRN has a unique max and min on $S_n$, which appear to be the only critical points, recovering the known result, e.g.~\cite{Bruhat-Order-Sn}, that the order complex of $S_n$ is homotopy equivalent to a sphere.
\end{remark}

\subsection{The Sum of Realization Numbers and Chains in the Lattice of Partitions}\label{sec:sum}

Given that the tree realization number on the set of strict barcodes induces a function $R:S_n \to \N$, it is natural to study the  sum:
\[ 
\sum_{\sigma \in S_n}  R(\sigma). 
\]
As we show in this section, this sum is equal to the number of combinatorial classes of merge trees (Definition \ref{defn:combo-merge-tree}) and provides another quantitative characterization of the difference between merge trees and phylogenetic trees, which is explored further in the next section.

The sum of TRNs also connects this work with a classical object of study in algebraic combinatorics: each combinatorial equivalence class of merge trees corresponds to a maximal chain in the lattice of partitions, ordered by refinement.
For topologists this should make intuitive sense: as two connected components merge this coarsens the partition of a sublevel set into connected components. Enumerating these components leads naturally to the study of the partitions of the set of $\{0,1,\ldots, n\}$.

We start now by showing that this sum counts combinatorial equivalence classes of merge trees, but first prove a preparatory lemma.

%Using the observations in Example \ref{BarstoTreesExample}, we are able to relate the sum of realization numbers to the maximal chains in the lattice of partitions of $\{0, 1, 2 \}$ ordered by refinement:

\begin{lemma}\label{lem:line-of-MTs}
If $(T,h)$ and $(T',h')$ are combinatorially equivalent merge trees with associated barcodes $B$ and $B'$, then the straight-line path $\overline{BB'}$ from $B$ to $B'$ lifts to a continuous path (with respect to the interleaving distance) connecting $T$ and $T'$.
\end{lemma}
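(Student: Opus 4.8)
The plan is to lift the path by holding the combinatorial tree fixed and interpolating the height functions, exploiting the convexity already established in Lemma \ref{lemma_cont_path}. First I would use the combinatorial equivalence of $(T,h)$ and $(T',h')$ to fix a graph isomorphism $\varphi:T\to T'$ preserving both the birth and the death orders, and transport $h'$ back to $T$ by setting $\tilde h' := h'\circ\varphi$. This places both height functions on the common vertex set $V(T)$, and by Lemma \ref{lem:combo-trees-and-BCs} the barcodes $B$ and $B'$ share a single permutation type $\sigma$. For each $t\in[0,1]$ I then define $h^t := t\,h + (1-t)\,\tilde h'$ on $V(T)$ and claim that $(T,h^t)$ is the desired lift of $B^t := tB + (1-t)B'$.

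The key observation is that the set of height functions making the fixed combinatorial tree $T$ into a merge tree of the same combinatorial type as $(T,h)$ is convex: it is cut out by the linear inequalities $h(v)\ge h(w)$ for every parent--child pair $(v,w)$, together with the order conditions on the birth labels (leaves) and the death labels (internal nodes) that encode $\sigma$. Both $h$ and $\tilde h'$ satisfy these conditions---the latter because $\varphi$ preserves adjacency and both orders---so their convex combination $h^t$ satisfies them as well, and $(T,h^t)$ is a valid merge tree realizing the same combinatorial merge tree. (Intermediate trees may fail to be generic if a birth height happens to coincide with an unrelated death height, but this is harmless: the merge-tree axioms and the barcode identification below only require the parent--child inequalities and $h^t(l)<h^t(v)$ along each branch, all of which persist under convex combination.) Because the Elder-rule pairing of births to deaths is determined by this fixed combinatorial structure, the bar born at each leaf $l_i$ is $[\,h^t(l_i),\,h^t(v_i)\,)$, where $v_i$ is the internal node at which the branch of $l_i$ dies; the affine dependence of $h^t$ on $t$ then shows that the barcode of $(T,h^t)$ is exactly $tB + (1-t)B' = B^t$. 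At the endpoints $(T,h^1)=(T,h)$ and $(T,h^0)=(T,\tilde h')\cong(T',h')$, so the path connects $T$ and $T'$.

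It remains to check continuity with respect to the interleaving distance. Since $h^t$ depends affinely on $t$, we have $\|h^s - h^t\|_\infty = |s-t|\cdot\|h - \tilde h'\|_\infty$, and the standard stability bound for merge trees on a common underlying tree yields $d_I\big((T,h^s),(T,h^t)\big)\le \|h^s - h^t\|_\infty$ (see \cite{structural}). Hence $t\mapsto (T,h^t)$ is Lipschitz, and in particular continuous, completing the lift.

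The step I expect to require the most care is the second paragraph: verifying that the interpolated height function still defines a merge tree of the correct combinatorial type, which amounts to the convexity of the space of admissible height functions and the constancy of the Elder-rule pairing along the path (itself a consequence of the permutation type being constant, by Lemma \ref{lemma_cont_path}). Once this is pinned down, the identification of the barcode of $(T,h^t)$ with $B^t$ and the continuity estimate are routine.
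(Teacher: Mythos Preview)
Your proposal is correct and follows essentially the same approach as the paper: identify the two trees via the combinatorial equivalence, linearly interpolate the height functions on the fixed underlying tree, observe that the resulting barcode is $B^t$, and invoke the $\ell_\infty$ stability bound for merge trees on a common underlying tree to obtain continuity. Your treatment is in fact slightly more careful than the paper's---you make the convexity of the admissible height functions explicit and flag the (harmless) possible loss of genericity along the path---but the architecture of the argument is the same.
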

\begin{proof}
Lemma \ref{lem:combo-trees-and-BCs} guarantees that the barcodes $B$ and $B'$ associated to $T$ and $T'$ have the same permutation type, so that the straight-line path $\overline{BB'}$ of Remark \ref{remark_path_barcodes} does indeed exist, and every point on the path is a barcode of that permutation type by Lemma \ref{lemma_cont_path}.
We now apply the Elder Rule to construct  a one-parameter family of merge trees 
$$[0,1]\to \mathcal{MT}_n: t\mapsto (T^t,h^t)$$ 
that lifts the path $\overline{BB'}$.

Since $(T,h)$ and $(T',h')$ are combinatorially equivalent, the trees $T$ and $T'$ are isomorphic as graphs.  Without loss of generality, we can suppose that $T=T'$. 

To define our one-parameter family of  merge trees, we set $T^t=T$ for all $t\in [0,1]$ and  specify the height function $h^t:V(T)\to \R$ as follows.
We have no choice but to set $h^t(r)=\infty$, where $r$ is the root, so it remains only to define $h^t$ on the non-root nodes.

If $v_i$ is the $i$-th leaf node by birth order in $T$, and therefore corresponds to the $i$-th bar of $B^t$, then the $h^t(v_i)$ is chosen to be the birth time of this bar, i.e., 
\[
    h^t(v_i)=b_i(1-t) + b'_i t.
\]
Similarly, if $w_i$ is the internal node corresponding to the $i$-th bar in $B^t$, then $h^t(w_i)$ is chosen to be the death time of this bar, i.e.,
$$h^t(w_i)=d_i(1-t) + d'_i t.$$
By construction, the barcode associated to $(T,h^t)$ is clearly $B^t$.

It was shown in \cite{interleaving} (Theorem 2.2) that the interleaving distance between two merge trees in bounded by the maximal difference between the two height functions. Since $T^{t_1}=T^{t_2}$ for all $t_i \in [0,1]$ and the height functions $h^t$ change continuously with respect to the $l_\infty$ norm, it follows that the path defined by $t \mapsto (T^t,h^t)$ in the space of trees is continuous.
\end{proof}

%Lemma \ref{lem:line-of-MTs} proves that any merge tree whose barcode is of permutation type $\sigma$ can be connected with a line of merge trees to one that is in a particular form.

\begin{definition}\label{defn:standard-form-MT}
We say that a generic merge tree $(T,h)$ with $n+1$ leaves is in \emph{standard form} if its height function $h$ maps its leaf nodes onto $\{0,1,\ldots,n\}$ and its internal non-root nodes onto $\{n+1,\ldots, 2n\}$. 
\end{definition}

It is clear that a merge tree in standard form has a barcode in standard form (Definition \ref{defn:standard-form-BC}).

\begin{lemma}\label{lem:count-combo-classes-MTs}
For all $\sigma \in S_n$, the tree realization number $R(\sigma)$ is equal to the number of combinatorial equivalence classes of merge tree whose barcode has permutation type $\sigma$.
\end{lemma}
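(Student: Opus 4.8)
The plan is to exhibit an explicit bijection between the set of combinatorial equivalence classes of merge trees of permutation type $\sigma$ and the set of isomorphism classes of merge trees realizing the standard barcode $B(\sigma)$ of Definition \ref{defn:standard-form-BC}; the latter set is exactly what the realization number counts (for a fixed strict barcode the height function is forced, so counting realizing graphs is the same as counting realizing merge trees), and by Lemma \ref{lem:TRN-left-inversion} its cardinality is $R(B(\sigma)) = R(\sigma)$. First I would record the reduction: since $R$ depends only on the permutation type, it suffices to work with the single representative $B(\sigma)$, whose leaf (birth) heights are $0,1,\ldots,n$ and whose internal (death) heights are $n+1,\ldots,2n$.

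The forward map sends a merge tree $(T,h)$ realizing $B(\sigma)$ to its combinatorial equivalence class. By Lemma \ref{lem:combo-trees-and-BCs} this class has permutation type $\sigma$, so the map lands in the correct target. For injectivity I would observe that any two realizations of $B(\sigma)$ carry height functions taking the \emph{same} values in the same birth and death orders (forced by the barcode), so a graph isomorphism preserving birth and death orders between them automatically preserves heights; hence combinatorially equivalent realizations of $B(\sigma)$ are already isomorphic as merge trees, and distinct realizations give distinct classes.

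For surjectivity I would start from a combinatorial merge tree $M$ of type $\sigma$, i.e.\ a combinatorial tree together with a birth order on its leaves and a death order on its internal nodes, and assign standard-form heights: the $i$-th leaf by birth order receives height $i$, and the $k$-th internal node by death order receives height $n+k$ (with $h(r)=\infty$). I would then check this produces an honest merge tree: leaf-to-internal edges satisfy the height inequality because every death value exceeds every birth value, and internal-to-internal edges satisfy it because the death order of $M$ refines the ancestral order on internal nodes (a parent dies later than its child). Finally, reading off the Elder-rule pairing recorded by $M$ — birth $i$ paired with the death in position $\sigma(i)$ — shows that the resulting bar is $[i,\sigma(i)+n)$, so the barcode is exactly $B(\sigma)$ and the combinatorial class of the new tree is again $M$; this makes the two maps mutually inverse.

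The step I expect to be the main obstacle is the surjectivity verification, and specifically confirming that the assigned heights define a \emph{valid} merge tree. The content there is that the internal death order of a combinatorial merge tree is constrained to refine the tree's ancestral order, which is what guarantees the monotonicity $h(\text{parent})>h(\text{child})$ along internal edges and is the reason an arbitrary labelling of the internal nodes need not be realizable. I would make sure to state this constraint explicitly (it is implicit in Remark \ref{rmk_comb_MT}, since a combinatorial merge tree is by definition the class of an actual merge tree) rather than treat it as obvious.
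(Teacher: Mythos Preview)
Your argument is correct and has the same overall architecture as the paper's: both reduce to the standard barcode $B(\sigma)$ and show that the merge trees realizing $B(\sigma)$ are in bijection with the combinatorial equivalence classes of permutation type $\sigma$, with the injectivity step argued essentially identically (distinct standard-form realizations cannot be combinatorially equivalent because the forced graph isomorphism would have to swap two leaves of different birth order). The one genuine difference is in surjectivity. The paper invokes Lemma~\ref{lem:line-of-MTs} to deform an arbitrary merge tree of type $\sigma$ along a straight-line path to one in standard form, thereby producing a standard-form representative of every combinatorial class. You instead build the standard-form representative directly from the combinatorial data by assigning heights $0,\ldots,n$ and $n+1,\ldots,2n$ according to the given birth and death orders. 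Your route is more elementary---it avoids the interleaving-distance continuity machinery entirely---but it obliges you to check by hand that the height assignment is monotone along internal edges, which you correctly trace to the fact that the death order on a combinatorial merge tree must refine the ancestral order (this is automatic because a combinatorial merge tree is, by definition, the class of an actual merge tree). The paper's route hides this check inside the path lemma.
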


It follows immediately from this lemma that
\[
\sum_{\sigma \in S_n}  R(\sigma) =\# \{\text{combinatorial classes of merge trees}\},
\]
since barcode permutation type is also an invariant of the combinatorial equivalence type of the merge tree.

\begin{proof}
By Lemma \ref{lem:line-of-MTs} there is a path in $\mathcal{MT}_n$ from any merge tree whose barcode is of permutation type $\sigma$ to one that is in standard form (Definition \ref{defn:standard-form-MT}).

The tree realization number $R(\sigma)$ counts the number of merge trees in standard form with the standard form barcode $B(\sigma)$; see Definition \ref{defn:standard-form-BC}.
%Both \cite{curry2017fiber} and \cite{TRN} prove that each merge tree with barcode $B(\sigma)$ is non-isomorphic because each choice of realization produces a merge tree with at least one edge of different length, and edge length is an isomorphism invariant for merge trees.
If two different merge trees $(T,h)$ and $(T',h')$ are both in standard form with the same barcode $B(\sigma)$, then they cannot be combinatorially equivalent. The inductive construction that created $T$ and $T'$ must have differed in a choice for some $i\in\{1,\ldots, n\}$ of where to attach a branch with leaf node at height $i$: to a branch with leaf node at height $j$ or height $j'$, with $0\leq j\neq j' < i$. An isomorphism of merge trees from $(T,h)$ to $(T',h')$ would have to exchange the order of of the leaf nodes at heights $j$ and $j'$, which is prohibited by the definition of combinatorial equivalence of merge trees (Definition \ref{defn:combo-merge-tree}).
\end{proof}

Since every merge tree is combinatorially equivalent to one in standard form, where leaf nodes are at heights $\{0,1,\ldots, n\}$, we can use this positioning to relate merge trees with maximal chains in the lattice of partitions of $n$.
We review briefly the necessary definitions.

\begin{definition}\label{defn:lattice-of-partitions}
A \emph{partition} of the set $\mathbf{n}:=\{0,1,\ldots,n\}$
is a collection of pairwise disjoint subsets $\mathcal{U}=\{U_1,\ldots, U_k\}$ of $\mathbf{n}$ whose union is $\mathbf{n}$.
A partition $\mathcal{U}$ \emph{refines} a partition $\mathcal{U}'$, written $\mathcal{U}\preceq \mathcal{U}'$, if every subset of $\mathcal{U}'$ is equal to a union of elements of $\mathcal{U}$. 
Said differently, $\mathcal{U}\preceq \mathcal{U}'$ if for each $U_i\in \mathcal{U}$ there exists $U_j'\in\mathcal{U}'$ such that  $U_i\subseteq U_j'$.
We denote the set of partitions of $\mathbf{n}$ by $\mathcal{P}_n$.
The refinement relation endows the set $\mathcal{P}_n$ with a partial order, which also happens to be a lattice.
A \emph{chain} in the lattice of partitions is a sequence of comparable partitions 
\[
\mathcal{U}_1\preceq \cdots \preceq \mathcal{U}_{\ell}.
\]
Such a chain is \emph{maximal} if it is not a subsequence of any longer chain.
\end{definition}

For the sake of notation, we can always write a partition of $\mathbf{n}$ as an ordered list where each subset is separated by a vertical line.
The finest possible partition---and hence the bottom element of the $\mathcal{P}_n$---is denoted
\[
    \{0|1|2|\cdots|n\}.
\]
The top element of $\mathcal{P}_n$ is the set $\{\mathbf{n}\}$.

\begin{theorem}\label{MaxChain}
Combinatorial equivalence classes of merge trees with $n+1$ leaf nodes are in bijective correspondence with maximal chains in the lattice of partitions $\mathcal{P}_n$.
As a consequence, the sum of realization numbers is given by the following closed form formula:
\[  \sum_{\sigma \in S_n} R(\sigma) = \sum_{\sigma \in S_n}\prod_{i=1}^n l_i(\sigma) = \dfrac{(n + 1)!n!}{2^{n}}.  
\]
\end{theorem}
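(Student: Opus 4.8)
The plan is to establish the claimed bijection first and then read the enumeration off from it; I will also record a short independent computation of $\sum_\sigma\prod_i l_i(\sigma)$ as a consistency check, noting that the first equality in the displayed formula is already Lemma \ref{lem:TRN-left-inversion}.

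For the bijection I would work with merge trees in standard form (Definition \ref{defn:standard-form-MT}), which by Lemma \ref{lem:line-of-MTs} represent every combinatorial equivalence class and in which all $n+1$ leaves sit below all $n$ internal branching nodes. Sweeping the height function upward, the path components of the sublevel set $h^{-1}(-\infty,t]$ partition the leaf set $\mathbf n=\{0,1,\dots,n\}$: below height $0$ the partition is empty; once $t$ exceeds the top leaf every leaf is its own block, giving the bottom element $\{0|1|\cdots|n\}$; and as $t$ crosses each of the $n$ internal nodes in death order exactly two blocks fuse, so we obtain a chain $\{0|1|\cdots|n\}=\mathcal U_0\prec\mathcal U_1\prec\cdots\prec\mathcal U_n=\{\mathbf n\}$ in which every step is a covering relation (the block count drops by exactly one). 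This is precisely a maximal chain in $\mathcal P_n$, and the $i$-th internal node (in death order) is tagged by the $i$-th covering step.

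To see this assignment is a bijection I would exhibit the inverse. Given a maximal chain, its $i$-th covering step merges two blocks $A_i,B_i$; I create an internal node labelled $i$ whose two children are the roots of the subtrees already representing $A_i$ and $B_i$, a singleton $\{j\}$ being the leaf $j$. Since each step fuses exactly two blocks, every internal node is trivalent, and since the node creating a block always precedes the node that later absorbs it, the death labelling is a linear extension of the parent–child order, so the output is a legitimate combinatorial merge tree. The two constructions are manifestly mutually inverse, which proves the first assertion. For the enumeration, counting maximal chains in $\mathcal P_n$ is the classical computation attributed to Erd\H{o}s: $\mathcal U_{i-1}$ has $n+2-i$ blocks and one must choose an unordered pair to merge, so the number of chains is $\prod_{i=1}^n\binom{n+2-i}{2}=\prod_{k=2}^{n+1}\frac{k(k-1)}{2}=\frac{(n+1)!\,n!}{2^{n}}$. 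By Lemma \ref{lem:count-combo-classes-MTs} the sum $\sum_\sigma R(\sigma)$ equals the number of combinatorial merge tree classes, hence by the bijection equals this value. As an independent check one computes the sum directly through the coordinate bijection $l\colon S_n\xrightarrow{\ \sim\ }[1]\times\cdots\times[n]$: since $\prod_i l_i(\sigma)$ becomes $\prod_i a_i$ on the product set, the sum factors as $\prod_{i=1}^n\bigl(\sum_{a=1}^i a\bigr)=\prod_{i=1}^n\frac{i(i+1)}{2}=\frac{(n+1)!\,n!}{2^{n}}$, matching.

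The step I expect to be the main obstacle is verifying that the reconstruction map genuinely lands in combinatorial merge trees and is a two-sided inverse — in particular, checking that the \emph{ordered} chain faithfully records the death order, so that the labelled internal-node structure (and not merely the unlabelled merge pattern) is recovered, and confirming surjectivity, i.e.\ that every maximal chain arises from some merge tree. Once the correspondence is pinned down, both the classical count and the direct product computation are routine.
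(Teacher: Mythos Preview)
Your argument for the bijection is essentially the same as the paper's: both put the merge tree into standard form, sweep the height function to read off a maximal chain of partitions via connected components, and then invert by attaching subtrees according to the successive pairwise merges dictated by a maximal chain. The paper phrases the inverse slightly differently (building a filtration on subsets of $\mathbf n$ and taking the merge tree of the resulting function), but the content is the same, and both derive the count $\prod_{k=2}^{n+1}\binom{k}{2}$ from the step-by-step choice of which two blocks to merge.

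Where you go beyond the paper is the final paragraph: your direct computation $\sum_{\sigma}\prod_i l_i(\sigma)=\prod_{i=1}^n\sum_{a=1}^i a=\prod_{i=1}^n\tfrac{i(i+1)}{2}$ via the coordinate bijection $l\colon S_n\xrightarrow{\sim}[1]\times\cdots\times[n]$ establishes the numerical formula without any reference to the lattice of partitions. The paper does not record this shortcut; it obtains the formula only as a corollary of the bijection together with the Erd\H{o}s--Moon count. Your route is more elementary for the enumeration alone, while the paper's route is what actually proves the structural first assertion of the theorem; including both, as you do, is a clean way to organize it.
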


\begin{proof} Given a merge tree $(T,h)$ in standard form with $n+1$ leaves, we explain first how to construct an associated maximal chain in the lattice of partitions, $\mathcal{P}_n$.  We then show that  every maximal chain is associated to some merge tree and that non-equivalent trees gives rise  to distinct maximal chains.   

Since $(T,h)$ is in standard form, all of the merge events (bifurcations) happen after (are at greater height than) all the birth events.
It follows that the sublevel set of $h:V(T)\to\R$ at any value in the interval $(n,n+1)\subset \R$ consists of $n+1$ components, corresponding to the finest partition $\mathcal{S}(T)_1:=\{0|1|2|\cdots|n\}$.

As we cross height $n+1$, the definition of the standard form implies that a merge event of two components, born at heights $i$ and $j$, occurs.
This merge event has the effect of coarsening the partition $\mathcal{S}(T)_1$, placing the two elements $i$ and $j$ into a single set of the partition. This defines the next, coarser partition $\mathcal{S}(T)_2$.

In general the $i$-th partition associated to the tree $T$ is the partition of the leaf nodes into connected components at height $n+i$.
At height $2n$ the sublevel set of the tree is connected, which corresponds to the top element in $\mathcal{P}_n$.

Each standard form merge tree thus gives rise to a chain of $2n$ elements in $\mathcal{P}_n$, which is obviously maximal.  Moreover, from any maximal chain \[
\mathcal{U}_1\preceq \cdots \preceq \mathcal{U}_{\ell}
\] in $\mathcal P_n$, one can always build a merge tree that realizes the chain as follows. Start by defining a filtration of the set of subsets of $[n]$, where a subset $V\subset \mathbf{n}$ enters the filtration at $n+i$, where $i$ is the smallest index such that $V\subset U$ for some $U\in \mathcal{U}_i$. This defines a function from the set of subsets of $[n]$ (of which the geometric realization is the $n$-simplex) to $\R$. Taking the merge tree of this function as in Remark \ref{alternative_mt} associates a merge tree to a chain in $\mathcal P_n$.

Injectivity of the map from standard form merge trees to maximal chains is also clear. If two merge trees in standard form produce the same maximal chain, then their heights and adjacency relationships must be the same, i.e., they must be combinatorially equivalent.

The number of maximal chains in $\mathcal{P}_n$ was determined by Erd\H{o}s and Moon \cite{Moon} to be $(n+1)!n! 2^{-n}$.
This number is easily understood in the setting of merge trees. First, one chooses two of the $n+1$ connected components to merge at height $n+1$. Then one chooses two of the remaining $n$ connected components to merge at height $n+2$.
This process repeats until we run out of options at height $2n$. The number of ways of constructing standard form merge trees is thus
\[
\binom{n+1}{2}\binom{n}{2}\cdots \binom{2}{2}=\dfrac{(n+1)n}{2}\cdot \dfrac{n(n-1)}{2}\cdots \dfrac{2\cdot 1}{2}=\dfrac{(n + 1)!n!}{2^{n}}.
\]
\end{proof}

\begin{example}
Figure \ref{latticetree} shows the lattice of partitions on the set $\{0,1,2\}$ together with the three possible merge trees corresponding to the maximal chains in the lattice.

\begin{figure}
    \centering
    \includegraphics[width=\textwidth]{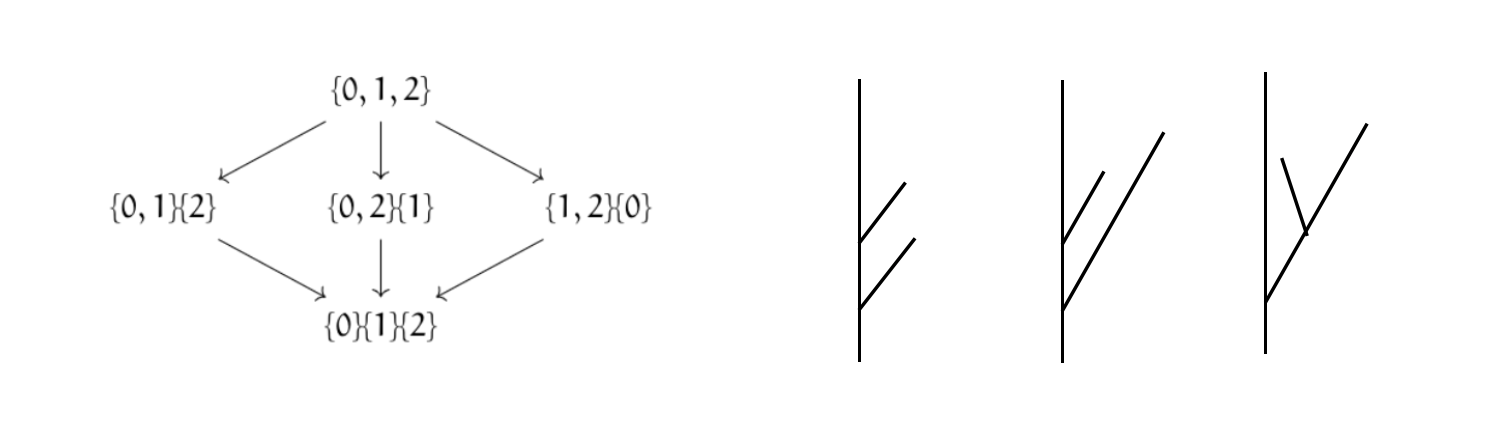}
    \caption{Left: The lattice of partitions of the set $\{0,1,2\}$. Right: The three possible merge trees corresponding to the maximal chains in the lattice, illustrating Theorem \ref{MaxChain}.}
    \label{latticetree}
\end{figure}
\end{example}

\begin{remark}[Expected Tree Realization Number]\label{rmk:expected-TRN}
It is very convenient that $n!$ appears in the numerator of the sum of realization numbers.
As we explain in greater depth in the section on statistics for the realization number, this allows us to compute the average realization number when $S_n$ is equipped with the uniform measure, for which the probability of a permutation $\sigma$ $P(\sigma)=\dfrac{1}{n!}$.
Indeed, by rearranging terms slightly, we see that the expected realization number is determined by the ratio of $(n+1)!$ and $2^n$:
\[
\mathbb{E}[R]=\sum_{\sigma\in S_n} R(\sigma) P(\sigma)=\dfrac{1}{n!}\dfrac{(n + 1)!n!}{2^{n}}=\dfrac{(n + 1)!}{2^{n}}.
\]
\end{remark}
Before studying the probabilistic aspects of the realization number more fully, we first compare  Theorem \ref{MaxChain} with analogous counting results for phylogenetic trees in the next section.

\subsection{Counting Merge Trees versus Phylogenetic Trees} \label{sec_MT_vs_PT}

In this section, we compare two counting results for combinatorial merge trees and for phylogenetic trees. 
On the one hand, Theorem \ref{MaxChain} implies that there are $\dfrac{(n + 1)!n!}{2^{n}}$ different combinatorial merge trees with $n+1$ leaves.  On the other hand, it was shown in  \cite{NumberTrees} that there are $(2n-1)!!$ distinct combinatorial phylogenetic trees with $n+1$ leaves.
In general, there are more classes of merge trees than there are phylogenetic trees.
In the next example, we work through the case $n=3$ in detail.

\begin{example}\label{ex:15-vs-18-trees}
For $n=3$, i.e.,~$4$ leaf nodes, these formulas imply that there are 18 different classes of merge trees, but only 15 classes of phylogenetic trees, shown in Figure \ref{fig:15-binary-trees}.
In Figure \ref{cayley_averagetree}C, one can see the 18 different classes of merge trees, arranged by row according to their permutation type in $S_3$.
There are three pairs of merge trees highlighted with colored boxes that correspond to the same combinatorial type of phylogenetic tree.

\begin{figure}[h]
    \centering
    \includegraphics[width=.6\textwidth]{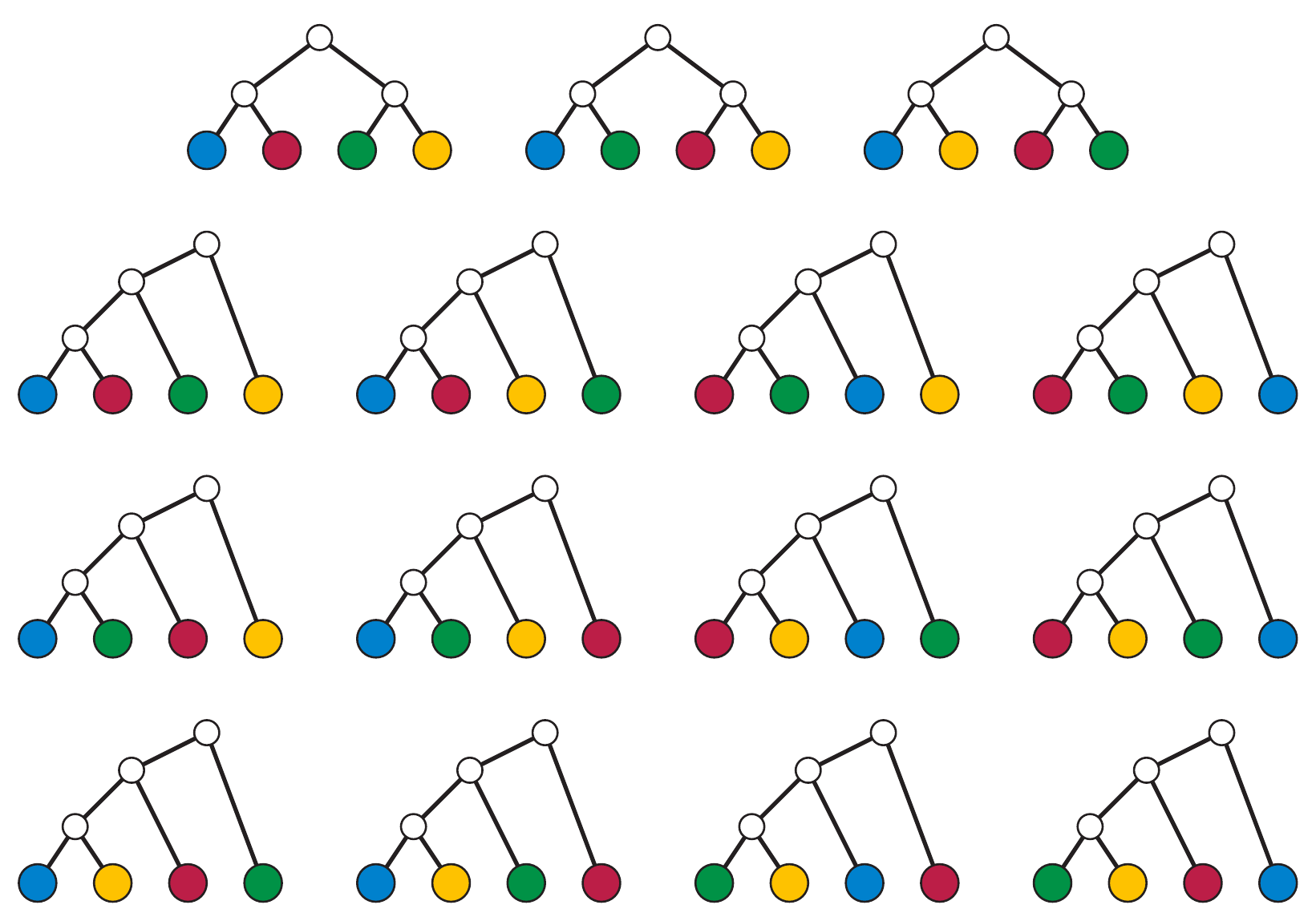}
    \caption{[Wikipedia, "Double Factorial", "Unordered binary trees with 4 leaves", n.d.]. The 15 different binary rooted trees with four labelled-by-color leaf nodes. The top node should be regarded as the unique child of the root node. This should be compared with the 18 different merge trees in Figure \ref{cayley_averagetree}C, as discussed in Example \ref{ex:15-vs-18-trees}.}
    \label{fig:15-binary-trees}
\end{figure}

\end{example}

As the example above shows, the essential difference between classes of merge trees and classes of phylogenetic trees is that merge trees are sensitive to relative heights of internal (death) nodes, whereas a phylogenetric tree is not.
This also explains why two combinatorially equivalent metric phylogenetic trees $(T,m)$ and $(T',m')$ may be associated to different permutation types, if one uses Proposition \ref{prop:BHV-MT-map-compare} to define a height function on each and compute a barcode according to the Elder rule.
However there are certain orders of births and deaths that must be preserved.
As one can see in Figure \ref{cayley_averagetree}C, the pair of trees in the purple box under column B both have the blue bar being born before and dying after the purple bar; the relative positioning of the death time associated to the red bar is the only thing that changes.

In this section we pinpoint more precisely how many different classes of merge trees can produce the same class of phylogenetic tree. 
As one might imagine, this is dictated in part by certain subgroups of the symmetric group, determine essentially by the number of incomparable internal nodes in a certain the natural partial order on the tree nodes specified by $p<q$ if $p$ is on the unique path from $q$ to the root.
Our bound on the number of classes of merge trees that define the same class of phylogenetic trees is formulated as follows. Recall that we assume that the root of any rooted tree has a unique child.

%\textbf{Nota Bene: I excised some of Brendan's labelling procedure as it didn't seem to be completely necessary for the proof of the following proposition and seemed to repeat some earlier results. We should revise if the following proof does not read well.}

\begin{proposition}\label{prop:combo-difference-bound}
Let $T$ be a combinatorial phylogenetic tree. Let $c$ denote the unique child of the root vertex.
Let $A_i$ be the set of internal nodes of $T$ that are $i$ hops away from $c$ in the path metric (in particular, $A_0=\{c\}$).

If $\eta(T)$ denotes the number of combinatorial equivalence classes of merge trees indistinguishable from $T$ when regarded as combinatorial phylogenetic trees, then $$\eta(T)\geq \prod_{j=0}^k |A_j|!.$$
\end{proposition}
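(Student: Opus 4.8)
The plan is to reduce the count $\eta(T)$ to a count of admissible orderings of the $n$ internal (death) nodes, and then to exhibit at least $\prod_{j=0}^k |A_j|!$ such orderings by a level-by-level construction. First I would fix the underlying tree and its leaf labelling to be those of $T$, and observe via Remark \ref{rmk_comb_MT} that a combinatorial merge tree built on this data amounts to nothing more than an additional total order on the internal nodes, namely the death order. Passing from such a merge tree to its associated combinatorial phylogenetic tree (as in Proposition \ref{prop:BHV-MT-map-compare}) forgets exactly this death order while retaining the tree and its birth-order leaf labels, so any merge tree built on $T$ in this way is automatically indistinguishable from $T$. Conversely, realizability as a merge tree imposes precisely one constraint on the death order: since a parent must have height at least that of its child, and all internal nodes sit above all leaves in standard form (Definition \ref{defn:standard-form-MT}), the death order must be a linear extension of the ancestor partial order, i.e.\ if $p$ is an ancestor of $q$ then $p$ receives the larger death value. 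Each such admissible order is realized by an actual standard-form merge tree exactly as in the proof of Theorem \ref{MaxChain}.

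Next I would establish that distinct admissible death orders give distinct combinatorial equivalence classes, so that each one contributes separately to $\eta(T)$. The key point is rigidity: because the leaves of $T$ carry pairwise-distinct labels and every internal node is the least common ancestor of a pair of leaves beneath it, the only graph automorphism of $T$ fixing all leaves is the identity. Hence any combinatorial equivalence (Definition \ref{defn:combo-merge-tree}) between two merge trees built on $T$ must preserve birth order, hence fix every leaf, hence be the identity on the whole tree, and therefore it can identify two death orders only when they already coincide. In fact this shows $\eta(T)$ equals the number of \emph{all} admissible death orders; for the lower bound I need only that the level-respecting ones constructed below are pairwise inequivalent and each indistinguishable from $T$.

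Finally I would produce the bound by counting a convenient family of admissible orders. Partition the internal nodes into the level sets $A_0,\ldots,A_k$, where $A_j$ collects the internal nodes at distance $j$ from $c$. I would record two elementary order-theoretic facts: distances to $c$ add along the path to the root, so any ancestor of a node in $A_j$ lies in some $A_{j'}$ with $j'<j$; and consequently two nodes in the same level are incomparable in the ancestor order. Then I assign death values so that the whole block $A_0$ receives larger values than all of $A_1$, which receives larger values than all of $A_2$, and so on, ordering the $|A_j|$ nodes within each block arbitrarily. Every such assignment is admissible, since an ancestor always lies in a strictly earlier (higher) block and so gets the larger value; and because the within-block orderings are independent, this yields $\prod_{j=0}^k |A_j|!$ distinct admissible orders, whence $\eta(T)\geq \prod_{j=0}^k |A_j|!$.

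The step I expect to be the main obstacle is the second paragraph: pinning down exactly what ``indistinguishable as combinatorial phylogenetic trees'' forces, and confirming through the rigidity argument that no two distinct death orders collapse into one combinatorial equivalence class. The level-by-level count itself is routine once the two facts about the $A_j$ are in hand. I would also remark that the inequality is generally strict, since admissible orders mixing levels exist as soon as some intermediate $A_j$ contains a node with no internal descendants, and these are not counted by $\prod_{j=0}^k |A_j|!$.
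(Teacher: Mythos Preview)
Your proposal is correct and rests on the same combinatorial fact as the paper's proof---that internal nodes at equal distance from $c$ are pairwise incomparable in the ancestor order and may therefore be death-ordered freely---but the packaging is genuinely different. The paper proceeds by induction on the maximal depth $k$: it observes that every node in $A_k$ has only leaf children, strips those leaves to obtain a smaller phylogenetic tree $T'$, invokes the inductive hypothesis for $\prod_{j<k}|A_j|!$ classes on $T'$, and then reattaches with an arbitrary ordering of $A_k$. Your route is more direct and more informative: you identify $\eta(T)$ \emph{exactly} as the number of linear extensions of the ancestor poset on internal nodes, via the rigidity of leaf-labelled binary trees, and then exhibit the level-respecting extensions as a subfamily of size $\prod_j |A_j|!$. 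This yields a sharper statement than the paper records (and your remark on strictness is a nice bonus). Conversely, the paper's induction sidesteps the rigidity step you rightly flag as the crux---though in fact the paper also silently relies on distinct orderings of $A_k$ producing distinct equivalence classes, which is precisely what your rigidity argument justifies.
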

\begin{proof}
We prove our result by induction on  the maximum path distance in $T$ from the child $c$. 
If the maximum path distance to the child is 0, then $T$ has a unique internal node $c$, i.e., $T$ has three nodes: the root $r$, its child $c$, and two leaves. This tree admits unique combinatorial merge and phylogenetic strucures, whence $\eta(T)=1=0!$. 

Suppose now the result holds whenever the maximal path distance from the child $c$ is less than $k$, for some $k\geq 1$.  Decompose the internal nodes of $T$ into $k$ sets $A_1,A_2,...,A_k$. All nodes in $A_k$ have only (two) leaf descendents, as otherwise there would exist an internal node further away from $c$ than some node in $A_k$, so the maximal path distance to $c$ would be greater than $k$. 

Let  $A_k=\{q_1,q_2,...,q_s\}$. If we remove the leaf nodes attached to each $q_i\in A_k$, we obtain a phylogenetic tree $T'$ with internal nodes partitioned into sets $A_1,A_2,...,A_{k-1}$.  By the induction hypothesis, there are at least $\prod^{k-1}_{j=1}|A_j|!$ combinatorial equivalence classes of merge trees indistinguishable from $T'$ when considered as phylogenetic trees. 

For each such equivalence class, we can obtain merge trees indistinguishable from $T$ as phylogenetic trees by reattaching the leaves to each $q_i$ and choosing any ordering on $A_k$, which we may do because all $q_i$ are at the same distance from $c$, and hence are incomparable nodes. Since there are $|A_k|!$ possible total orders on the set of $q_i$, we can conclude.
\end{proof}
 %\qed

%%%%%%%%%%%%%%%%%%%%%%%%%%%%%%%%%%%%%
%%%%%%%%%%%%%%%%%%%%%%%%%%%%%%%%%%%%%
%%%%%%%%%%%%%%%%%%%%%%%%%%%%%%%%%%%%%
%%%%%%%%%%%%%%%%%%%%%%%%%%%%%%%%%%%%%

\section{The Probabilistic Study of Tree Realization Numbers}\label{sec_statistics_real}

As already foreshadowed by Remark \ref{rmk:expected-TRN}, the formula in Theorem \ref{MaxChain} provides us with an unexpected gift in the study of statistics for realization numbers. Assuming that every combinatorial type of barcode is equally likely, so that each permutation type $\sigma$ has probability $\dfrac{1}{n!}$, we calculated that the expected tree realization number (TRN) is
\[
E[R]=\sum_{\sigma\in S_n} R(\sigma) P(\sigma)=\dfrac{1}{n!}\dfrac{(n + 1)!n!}{2^{n}}=\dfrac{(n + 1)!}{2^{n}}.
\]

We regard the assumption that each barcode permutation type is equally likely as a sort of ``null hypothesis'' to be tested against.
Even if one considers Gaussian perturbations to functional data, characterizing the image of this measure on the space of merge trees and hence (combinatorial types) of barcodes is an open problem.
Depending on the setup, it may be the case that features tend to die in the order in which they are born (a sort of ``topological first in first out'' queue) or it might be the case that features die in the opposite order in which they are born (a ``first in last out'' queue). In general, for real data, it is unlikely that the distribution of permutation types of (barcodes of) merge trees will be uniform. Regardless, characterizing the distribution of  TRNs in terms of the output of the function $R:S_n \to \N$ when $S_n$ is equipped with the uniform measure provides an important null hypothese against which to test real data. 

In this section we start with a brief outline of computational methods for generating random barcodes and compare the corresponding distribution of permutation types with the uniform distribution.
We then provide formulas for first and second moments of the pushforward distribution $\pi_n:=R_*\mu_n$, where $\mu_n$ is the uniform measure on $S_n$.
This allows us to calculate the variance of the TRN, which opens the door to hypothesis testing wherever the map from trees to barcodes is of interest to scientific applications.

Somewhat surprisingly, Theorem \ref{thm:Dirichlet-distribution} says that the exact value for the measure $\pi_n$ can be determined from $\pi_{n-1}$ and Dirichlet convolution with the uniform distribution on $S_{n-1}$, enabling us to study the entire distribution of TRNs as the number of features varies.
%A remark on how to compute all the higher moments is also offered.
To conclude, we provide a novel closed-form formula for the expected log-realization number, which allows us to characterize the empirical data in Figure \ref{realization_bio} in a more analytical manner.

\subsection{Distributions of Randomly Generated Barcodes}\label{sec_dist_barcodes}

\begin{figure}
    \centering
    \includegraphics[scale=0.7]{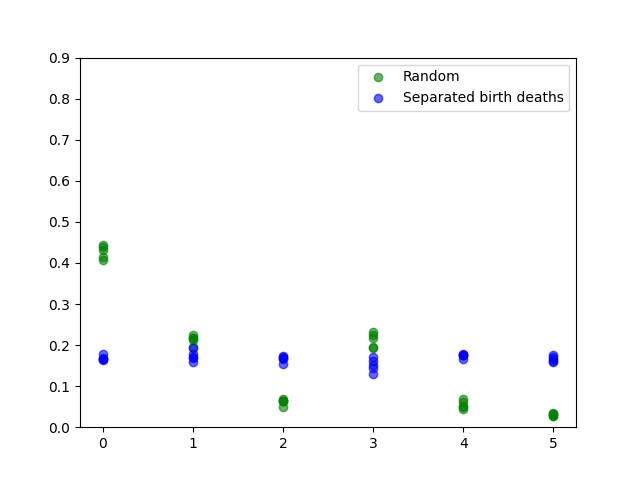}
    \caption{Two distributions on $S_3$ induced by distributions of barcodes with three non-essential bars. The elements of $S_3$ are indexed by the integers $0,...,5$. Green: we first pick uniformly the birth times $b_i$ in the interval $[0,100]$, then choose uniformly the death times $d_i \in [b_i,100].$ Blue: We pick uniformly three birth times $b_i \in [0,49]$ and three death times $d_i \in [50,100],$ which induces a uniform distribution on the symmetric group $S_3$.}
    \label{distribution_sn}
\end{figure}

In this section we briefly describe two methods to generate random barcodes and consider the pushforward distribution on $S_n$ for each of these. 
This pushforward is defined by the identification of barcodes with permutations as described above.

The first method was used in \cite{TRN} to generate barcodes and compare their realization numbers to the ones of biological barcodes, in a way similar to Figure \ref{realization_bio}. To generate a barcode with $n$ bars, for each bar we first pick a birth time $b_i$ uniformly at random in the interval $[0,100]$ and then pick a death time $d_i \in [b_i,100]$ uniformly at random. 
Because the latter distribution is conditioned on $d_i>b_i$, the induced distribution on the symmetric group is not uniform, as seen in Figure \ref{distribution_sn} with the ``random'' green dots. 

The second method displayed in Figure \ref{distribution_sn} forces separation of births and deaths to guarantee a uniform distribution on the symmetric group.
To generate $n$ bars in a barcode, we first choose $n$ births uniformly in the interval $[0,49]$, then $n$ death times $d_i$ uniformly in $[50,100]$. 
A moment of reflection shows that this provides a uniform distribution on $S_n$, as seen in Figure \ref{distribution_sn} with the ``separated'' blue points.

\subsection{The Distribution of Tree Realization Numbers via Dirichlet Convolution}

Let $\mu_n$ denote the uniform distribution on $S_n$. 
%By our correspondence, this is also a distribution on strict barcodes. 
By our correspondence, this is also a distribution on combinatorial equivalence classes of barcodes.
The tree realization number $R:S_n \to \N$ then defines a random variable where the probability $P(R=t)$ is determined by the number of permutations $n_t$ with realization number $t$.
The following theorem states that this probability can be computed recursively via convolution with the uniform distribution on ${1,\ldots,n}$.

\begin{theorem}\label{thm:Dirichlet-distribution} 
For any $k\geq 1$, let $\mu_k$ denote the uniform distribution on $S_n$ and  $\pi_n=R_*(\mu_n)$ its pushforward onto $\mathbb{N}$ via $R: S_n\to \mathbb N$. Let $U_k$ denote the uniform distribution on $\{1,2,...,k\}$.

The probability mass function of $\pi_n$ can be recursively defined as follows.
\begin{itemize}
    \item $\pi_1=U_1$.
    \item For $k>1$, $\pi_k=U_k*\pi_{k-1}$, where $*$ indicates Dirichlet convolution, i.e,. $$\pi_k(c)=\sum_{ab=c}U_k(a)\pi_{k-1}(b) \text { for all } c\in \mathbb N.$$
\end{itemize}
\end{theorem}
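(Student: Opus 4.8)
The plan is to exploit the product formula $R(\sigma)=\prod_{i=1}^n l_i(\sigma)$ from Lemma~\ref{lem:TRN-left-inversion} together with the bijection $l:S_n\to[1]\times[2]\times\cdots\times[n]$ recorded just before that lemma. The conceptual heart of the argument is that, under the uniform measure $\mu_n$, this bijection makes the coordinates of the left inversion vector \emph{mutually independent}. Indeed, $l$ carries $\mu_n$ to the uniform measure on the product $[1]\times\cdots\times[n]$, and the uniform measure on a finite Cartesian product is the product of the uniform measures on its factors. Hence the random variables $\sigma\mapsto l_i(\sigma)$ are independent, with $l_i$ distributed according to $U_i$, the uniform distribution on $\{1,\ldots,i\}$.

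First I would dispatch the base case. The group $S_1$ has a single element whose left inversion vector is $(1)$, so $R\equiv 1$ and $\pi_1$ is the point mass at $1$, which is exactly $U_1$. For the recursive identity at an arbitrary $n>1$, I would invoke Lemma~\ref{lem:TRN-left-inversion} and factor off the last coordinate of the inversion vector:
\[
R \;=\; \prod_{i=1}^{n} l_i \;=\; l_n\cdot\Bigl(\prod_{i=1}^{n-1} l_i\Bigr).
\]
Set $R':=\prod_{i=1}^{n-1}l_i$. This second factor depends only on the first $n-1$ coordinates $(l_1,\ldots,l_{n-1})$, which are jointly uniform on $[1]\times\cdots\times[n-1]$. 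But this is precisely the law of the left inversion vector of a uniform element of $S_{n-1}$, and $R'$ is computed from it by the same product formula; therefore $R'$ has distribution $\pi_{n-1}=R_*(\mu_{n-1})$.

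By the independence established above, $l_n\sim U_n$ is independent of $R'\sim\pi_{n-1}$. Since both factors take values in the positive integers, the distribution of their product is governed by Dirichlet convolution: for every $c$,
\[
\pi_n(c)=P(l_n R'=c)=\sum_{ab=c}P(l_n=a)\,P(R'=b)=\sum_{ab=c}U_n(a)\,\pi_{n-1}(b)=(U_n*\pi_{n-1})(c),
\]
which is the claimed recursion (with $n$ in place of $k$).

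The step I would be most careful about is not the final displayed computation, which is purely formal once independence is available, but rather the two structural facts feeding into it: the independence of the inversion-vector coordinates under $\mu_n$, and the identification of the partial product $\prod_{i=1}^{n-1}l_i$ with $\pi_{n-1}$. Both follow cleanly from $l$ being measure-preserving onto a product, but it is worth stating explicitly that the marginal law of the first $n-1$ coordinates of a uniform $\sigma\in S_n$ coincides with the full inversion-vector law of a uniform $\tau\in S_{n-1}$. I would flag this marginalization as the one place where the bijective structure of the left inversion vector is genuinely doing the work.
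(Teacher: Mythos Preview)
Your proposal is correct but takes a genuinely different route from the paper. The paper's proof constructs, for each $j\in\{1,\ldots,k\}$, an explicit injection $\kappa_{k-1}^j:S_{k-1}\to S_k$ (sending $k\mapsto j$ and bumping values $\geq j$ up by one), verifies that these maps jointly cover $S_k$, computes $R(\kappa_{k-1}^j(\sigma))=(k-j+1)R(\sigma)$, and then expresses $\mu_k$ as an average of the pushforwards $(\kappa_{k-1}^j)_*\mu_{k-1}$ before carrying out a direct calculation of $\pi_k(x)$. You instead work on the other side of the bijection $l:S_n\to[1]\times\cdots\times[n]$: since $l$ carries $\mu_n$ to the uniform measure on the product, the coordinates $l_i$ are mutually independent with $l_i\sim U_i$, and the factorization $R=l_n\cdot\prod_{i<n}l_i$ immediately exhibits $R$ as a product of independent factors with laws $U_n$ and $\pi_{n-1}$. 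Your argument is shorter and more probabilistic in flavor; indeed, the independence fact you use is precisely what the paper itself later invokes in the proof of Proposition~\ref{prop:expected-log-TRN}, so you are not importing anything foreign. The paper's approach has the minor advantage of making the combinatorial embedding $S_{k-1}\hookrightarrow S_k$ explicit, but for this theorem your route is cleaner and requires no new constructions.
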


It follows immediately from this theorem that
$$\pi_n=U_n*U_{n-1}*...*U_1$$
for all $n\geq 1$.

\begin{proof}
We prove this theorem by induction on $k$. 
It holds trivially for $k=1$. Suppose that it holds for $k-1$ for some $k\geq 2$. 
Each number that has positive probability under $\pi_{k-1}$ corresponds to $R(\sigma)=\prod_{i=1}^{k-1}l_i(\sigma)$ for some $\sigma\in S_{k-1}.$ 

Consider the map $\kappa_{k-1}^j:S_{k-1}\rightarrow S_k$ that embeds $S_{k-1}$ into $S_k$ as follows. For every $\sigma\in S_{k-1}$, the permutation $\kappa_{k-1}^j(\sigma)$ is specified by
% \textbf{I THINK THE CONDTIONALS IN THIS DEFINITION ARE SLIGHTLY WRONG.}
% \[
%   \kappa_{k-1}^j(\sigma(i)) =
%   \begin{cases}
%                                     \sigma(i) & \text{if $i<j$} \\
%                                   \sigma(i)+1 & \text{if $k>i\geq j$} \\
%                                     j & \text{if $i=k$}
%   \end{cases}
% \]
% \textbf{I THINK THE FOLLOWING DEFINITION IS THE INTENDED ONE.}
\[
  \kappa_{k-1}^j(\sigma)(i) =
  \begin{cases}
                                    j & \text{if $i=k$} \\
                                   \sigma(i)+1 & \text{if $\sigma(i)\geq j$} \\
                                    \sigma(i) & \text{if $\sigma(i)<j$}.
  \end{cases}
\]
In other words, $\kappa^j_{k-1}$ sends $\sigma\in S_{k-1}$ to the permutation $\kappa^j_{k-1}(\sigma)\in S_k$ that maps the $k$-th object to $j$ and then ``bumps up'' by one the assigned value of elements in $\{1,2,...,k-1\}$ that are mapped to an element greater than or equal to $j$. 

Each map in the collection $\{\kappa_{k-1}^j\}_{j=1}^k$ is injective and collectively their images surject onto $S_k$. To determine the realization numbers for $S_k$, we therefore need only compute the realization number of $\kappa^j_{k-1}(\sigma)$ for all $j\in \{1,\ldots,k\}$ and $\sigma\in S_{k-1}$. 

Consider $R\big(\kappa^j_{k-1}(\sigma)\big)=\prod_{i=1}^k l_i\big(\kappa^j_{k-1}(\sigma)\big)$. Since $l_i(\sigma)=|\{r \leq i \mid \sigma(r)\geq \sigma(i)\}|$ for any permutation $\sigma$, it follows that 
$$l_i\big(\kappa^j_{k-1}(\sigma)\big)=l_i(\sigma)$$
for all $i<k$. On the other hand, since $r\leq k$ for all $r\in \{1,..,k\}$, 
$$|\{r \leq k\mid \kappa^j_{k-1}(\sigma(r))\geq \kappa^j_{k-1}(\sigma(k))\}|=|\{r\mid \sigma(r)\geq j\}|=k-j+1.$$ 
We conclude that $R\big(\kappa^j_{k-1}(\sigma)\big)=(k-j+1)R(\sigma)$. 

By the construction of $\kappa^j_{k-1}$,  $$\mu_k=\frac{1}{k}\sum^k_{j=1}(\kappa^j_{k-1})_*(\mu_{k-1}),$$ 
where $(\kappa^j_{k-1})_*(\mu_{k-1})$ is the pushforward of $\mu_{k-1}$ by $\kappa^j_{k-1}$, since each pushforward assigns mass $\frac{1}{(k-1)!}$ to each element of a unique subset of size $(k-1)!$ in $S_k$. 

We are now prepared to compute $\pi_k$. Let $x\in \mathbb{N}$.

\begin{align}
    \pi_k(x)&=R_*(\mu_k)(x)=\mu_k\big(R^{-1}(x)\big)\\ &=\frac{1}{k}\sum^k_{j=1}(\kappa^j_{k-1})_*(\mu_{k-1})\big(R^{-1}(x)\big)\\ &=\frac{1}{k}\sum^k_{j=1}\mu_{k-1}\Big((\kappa^j_{k-1})^{-1}\big(R^{-1}(x)\big)\Big)\\
&=\frac{1}{k}\sum^k_{j=1}\mu_{k-1}\Big((\kappa^j_{k-1})^{-1}\big(\{\sigma\in S_k\mid R(\sigma)=x\}\big)\Big)\\ 
&=\frac{1}{k}\sum^k_{j=1}\mu_{k-1}\big(\{\tilde{\sigma}\in S_{k-1}\mid (k-j+1)\cdot R(\tilde{\sigma})=x\}\big)\\ 
&=\frac{1}{k}\sum^k_{j=1}\mu_{k-1}\big(\{\tilde{\sigma}\in S_{k-1}\mid j\cdot R(\tilde{\sigma})=x\}\big)\\ 
&=\frac{1}{k}\sum_{jb=x}\mu_{k-1}\big(\{\tilde{\sigma}\in S_{k-1}\mid R(\tilde{\sigma})=b\}\big)\mathbbm{1}_{[k]}(j)\\ 
&=\sum_{ab=x}\mu_{k-1}\big(\{\tilde{\sigma}\in S_{k-1}\mid R(\tilde{\sigma})=b\}\big)\frac{\mathbbm{1}_{[k]}(a)}{k}\\
&=\sum_{ab=x}U_k(a)\pi_{k-1}(b),
\end{align}
where the second line follows from the identity $\mu_k=\frac{1}{k}\sum^k_{j=1}(\kappa^j_{k-1})_*(\mu_{k-1})$, the fifth line follows from $R(\kappa^j_{k-1}(\sigma))=(k-j+1)R(\sigma),$ and the sixth and seven lines are simple changes of variables.
\end{proof}

For what follows, it is useful  to consider for each $n$ the multiset $\Pi_n$, which is the range of $R:S_n\rightarrow \mathbb{N}$, taking into account multiplicities. 
Let $m_n:\mathbb N\rightarrow \mathbb{Z}_{\geq 0}$ be the multiplicity function of $\Pi_n$, i.e., $m_n(x)$ is the number of times $x\in \mathbb{N}$ appears in $\Pi_n$, which is the number of permutations in $S_n$ that have realization number $x$.  In particular, $m_n(x)=0$ if and only if $x\not\in \Pi_n$. 

Since $\pi_n$ is the pushforward of the uniform distribution on $S_n$, the probability of each $x$ is determined by dividing the multiplicity function by $n!$, i.e.,~$\pi_n(x)=\frac{m_n(x)}{n!}.$ 
The following corollary follows directly from the construction of $\pi_n$.

\begin{corollary}The multiset $\Pi_n$ can be constructed recursively as follows:

\begin{itemize}
    \item $\Pi_1=\{1\}$.
    \item For $i>1$, $\Pi_i$ is the multiset with multiplicity function specified by $$m_n(x)=\sum_{ab=x}m(b)\mathbbm{1}_{[i]}(a)\mathbbm{1}_{\Pi_{i-1}}(b).$$
\end{itemize}
\end{corollary}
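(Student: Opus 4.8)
The plan is to deduce this corollary directly from Theorem~\ref{thm:Dirichlet-distribution} by translating its statement about probability mass functions into one about multiplicities, using the relation $\pi_i(x) = m_i(x)/i!$ recorded immediately before the corollary. Since the multiset $\Pi_i$ is completely determined by its multiplicity function $m_i$, it suffices to verify the displayed recursion for $m_i$ together with the base case, and both are obtained by clearing factorials out of the Dirichlet-convolution recursion $\pi_i = U_i * \pi_{i-1}$.

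First I would dispatch the base case. The group $S_1$ consists of the identity alone, whose single left-inversion coordinate is $l_1 = 1$, so $R$ takes the value $1$ with multiplicity one and $\Pi_1 = \{1\}$, exactly as claimed.

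For the inductive step I would substitute the explicit forms $U_i(a) = \frac{1}{i}\,\mathbbm{1}_{[i]}(a)$ and $\pi_{i-1}(b) = \frac{m_{i-1}(b)}{(i-1)!}$ into the Dirichlet convolution supplied by the theorem. This yields
$$\frac{m_i(x)}{i!} = \pi_i(x) = \sum_{ab=x} U_i(a)\,\pi_{i-1}(b) = \frac{1}{i!}\sum_{ab=x} \mathbbm{1}_{[i]}(a)\,m_{i-1}(b),$$
and cancelling the common factor $i!$ gives $m_i(x) = \sum_{ab=x}\mathbbm{1}_{[i]}(a)\,m_{i-1}(b)$. Since $m_{i-1}(b)$ already vanishes whenever $b\notin \Pi_{i-1}$, I may freely insert the factor $\mathbbm{1}_{\Pi_{i-1}}(b)$ without altering the sum, recovering precisely the stated formula.

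The step I expect to require the most care is notational rather than mathematical: the corollary writes $m_n$ on the left and an unsubscripted $m$ on the right, so the main task is to confirm that these are to be read as $m_i$ and $m_{i-1}$ respectively, and to observe that the indicator $\mathbbm{1}_{\Pi_{i-1}}$ is redundant in the sense that it merely restricts the summation to the support of $m_{i-1}$. Beyond this bookkeeping, the result is an immediate consequence of Theorem~\ref{thm:Dirichlet-distribution} and the identity $\pi_i(x) = m_i(x)/i!$.
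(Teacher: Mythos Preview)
Your proposal is correct and matches the paper's approach exactly: the paper simply states that the corollary ``follows directly from the construction of $\pi_n$,'' and your argument spells out precisely that derivation by substituting $\pi_i(x)=m_i(x)/i!$ and $U_i(a)=\frac{1}{i}\mathbbm{1}_{[i]}(a)$ into the Dirichlet recursion of Theorem~\ref{thm:Dirichlet-distribution} and clearing the factorial. Your remarks on the notational sloppiness (the subscripts on $m$ and the redundancy of $\mathbbm{1}_{\Pi_{i-1}}$) are apt and in fact more careful than the paper itself.
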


In other words, $\Pi_n$ can be defined as a $[k]*\Pi_{n-1}$, where $[k]=\{1,\ldots,k\}$ and $*$ is the Dirichlet convolution of multisets. 

\begin{figure}[h]
    \centering
    \includegraphics[width = .8\textwidth]{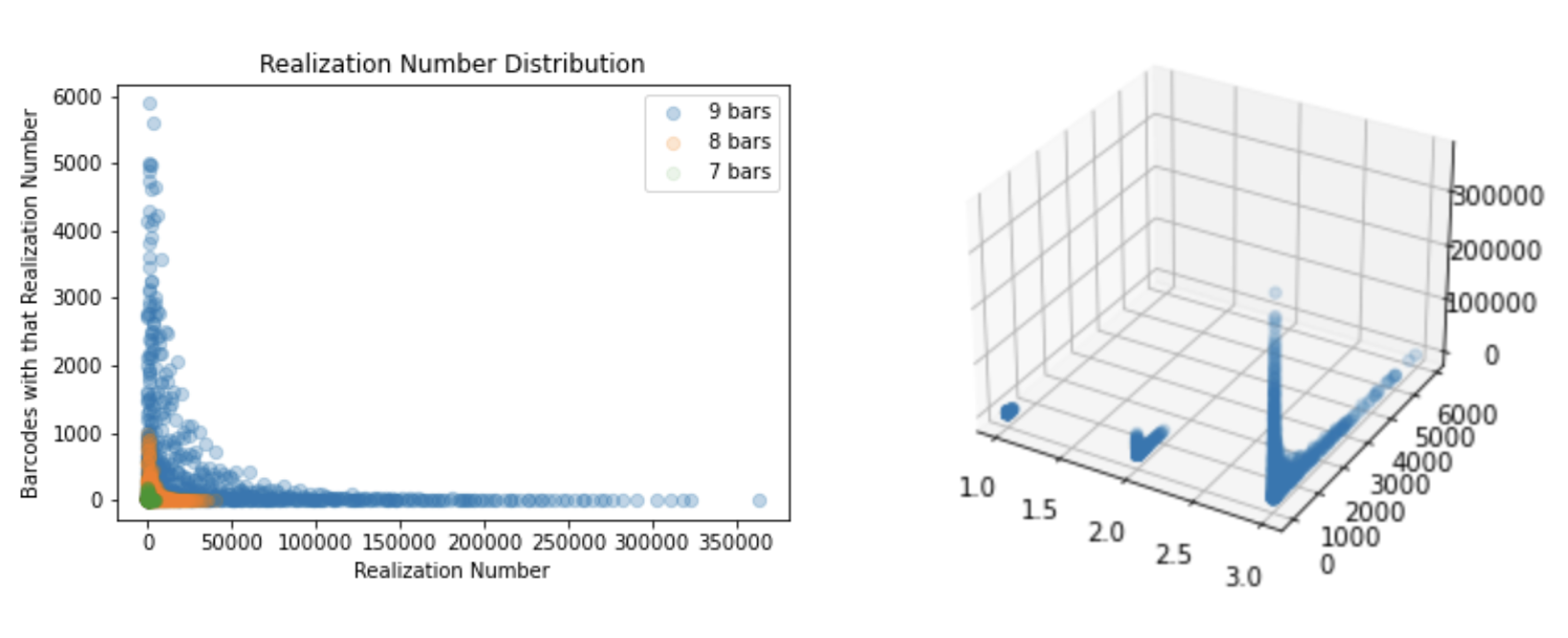}
    \caption{Distribution of Realization Numbers for 7,8,9 bars.}
    \label{fig:barsdist}
\end{figure}

\begin{example}
We now explicitly describe $\Pi_i$ for $i\in \{1,2,3,4\}$. For convenience we write the mutisets $\Pi_i$ as sets with repetition. Counting the number of appearances of a number $k$ determines $m_i(k)$.

\begin{align}\Pi_1&=\{1 \} \\
\Pi_2&=\{1,2\}\\
\Pi_3%=\{1,2,2,4,3,6\}\\
\Pi_4&=\{1,2,2,2,3,3,4,4,4,4,6,6,6,6,8,8,8,9,12,12,12,16,18,24\}
\end{align}
\end{example}

To conclude this section, we consider the moments of $\pi_n$. We explicitly calculate its first and second moments, obtaining the mean and variance of $\pi_n$ as corollaries, and outline a general formula for the higher moments.
\begin{proposition}\label{expected_real_number}
$\mathbb{E}(\pi_n)=\frac{(n+1)!}{2^n}$.
\end{proposition}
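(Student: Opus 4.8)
The plan is to compute the first moment directly, exploiting the Dirichlet convolution structure established in Theorem \ref{thm:Dirichlet-distribution}. By definition,
\[
\mathbb{E}(\pi_n)=\sum_{x\in\N} x\,\pi_n(x).
\]
There are two natural routes. The quickest is to observe that $\mathbb{E}(\pi_n)=\frac{1}{n!}\sum_{\sigma\in S_n}R(\sigma)$, since $\pi_n$ is the pushforward of the uniform measure on $S_n$, and then to invoke the closed form $\sum_{\sigma\in S_n}R(\sigma)=\frac{(n+1)!\,n!}{2^n}$ from Theorem \ref{MaxChain}. Dividing by $n!$ yields the claim immediately, exactly as foreshadowed in Remark \ref{rmk:expected-TRN}.

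The second route is more self-contained and, crucially, sets up the computation of the higher moments in the rest of the section. The key observation is that raw moments are multiplicative under Dirichlet convolution: for probability mass functions $f,g$ on $\N$ and any $m\geq 1$,
\[
\sum_{c} c^m (f*g)(c)=\sum_{c}c^m\sum_{ab=c}f(a)g(b)=\sum_{a,b} (ab)^m f(a)g(b)=\Big(\sum_a a^m f(a)\Big)\Big(\sum_b b^m g(b)\Big).
\]
Applying this with $m=1$ to the factorization $\pi_n=U_n*U_{n-1}*\cdots*U_1$, which follows from Theorem \ref{thm:Dirichlet-distribution}, reduces the problem to computing the mean of each uniform factor. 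Since $U_k$ is the uniform distribution on $\{1,\ldots,k\}$, its mean is $\frac{1}{k}\sum_{j=1}^k j=\frac{k+1}{2}$, and therefore
\[
\mathbb{E}(\pi_n)=\prod_{k=1}^n \mathbb{E}(U_k)=\prod_{k=1}^n \frac{k+1}{2}=\frac{(n+1)!}{2^n}.
\]

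There is essentially no hard step here; both routes are short, which is unsurprising given that Theorem \ref{MaxChain} already contains the substantive counting work. If anything, the only point requiring care is the bookkeeping in the multiplicativity identity: one must verify that reindexing the double sum by the factorization $c=ab$ is valid over $\N$, which holds because each $U_k$, and hence $\pi_n$, is supported on the positive integers, so no degenerate term at $a=0$ or $c=0$ interferes. I would present the Dirichlet-convolution argument as the main proof, since the displayed multiplicativity identity is precisely what will be reused to extract the second and higher moments of $\pi_n$.
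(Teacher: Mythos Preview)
Your first route is exactly the paper's proof: it simply cites Remark \ref{rmk:expected-TRN}, i.e., divides the sum from Theorem \ref{MaxChain} by $n!$. Your second route, via the multiplicativity of raw moments under Dirichlet convolution applied to $\pi_n=U_n*\cdots*U_1$, is correct and genuinely different from what the paper writes here; it bypasses Theorem \ref{MaxChain} entirely and, as you note, is precisely the mechanism the paper uses (in an inductive rather than fully factored form) for the second moment in the next proposition, so presenting it here is a clean way to unify the moment computations.
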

\begin{proof}
This is the content of Remark \ref{rmk:expected-TRN}, which establishes this proposition as a consequence of Theorem \ref{MaxChain}.
\end{proof}

\begin{proposition}
$\mathbb{E}(\pi_n^2)=\frac{(n+1)(2n+1)!}{12^n}.$
\end{proposition}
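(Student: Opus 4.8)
The plan is to decouple the product $R(\sigma)=\prod_{i=1}^n l_i(\sigma)$ using the bijection $l:S_n\to [1]\times[2]\times\cdots\times[n]$ recorded just before Lemma \ref{lem:TRN-left-inversion}. Because this map is a bijection and $|S_n|=n!=|[1]\times\cdots\times[n]|$, the uniform measure $\mu_n$ pushes forward to the uniform (product) measure on $[1]\times\cdots\times[n]$. Hence, viewed as random variables on $(S_n,\mu_n)$, the coordinates $l_1,\ldots,l_n$ are \emph{independent}, with $l_i$ distributed uniformly on $\{1,2,\ldots,i\}$. This is the same decoupling that underlies the Dirichlet convolution in Theorem \ref{thm:Dirichlet-distribution}: the law $\pi_n$ is exactly the law of a product of independent uniform variables.

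Granting this, the computation factors. Since $R=\prod_{i=1}^n l_i$ by Lemma \ref{lem:TRN-left-inversion}, we have $R^2=\prod_{i=1}^n l_i^2$, so independence gives $\mathbb{E}(\pi_n^2)=\mathbb{E}(R^2)=\prod_{i=1}^n \mathbb{E}(l_i^2)$. The single-coordinate second moment is a power sum:
\[
\mathbb{E}(l_i^2)=\frac{1}{i}\sum_{k=1}^i k^2=\frac{1}{i}\cdot\frac{i(i+1)(2i+1)}{6}=\frac{(i+1)(2i+1)}{6}.
\]

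It then remains to simplify the product $\prod_{i=1}^n \frac{(i+1)(2i+1)}{6}=\frac{1}{6^n}\Big(\prod_{i=1}^n(i+1)\Big)\Big(\prod_{i=1}^n(2i+1)\Big)$. The first factor is $\prod_{i=1}^n(i+1)=(n+1)!$, and the second is the product of odd numbers $3\cdot 5\cdots(2n+1)=(2n+1)!!=\frac{(2n+1)!}{2^n n!}$. Combining these yields $\mathbb{E}(\pi_n^2)=\frac{(n+1)!}{6^n}\cdot\frac{(2n+1)!}{2^n n!}=\frac{(n+1)(2n+1)!}{12^n}$, as claimed.

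As a sanity check on the method, the same argument applied to the first moment gives $\mathbb{E}(l_i)=\frac{i+1}{2}$ and hence $\mathbb{E}(\pi_n)=\prod_{i=1}^n\frac{i+1}{2}=\frac{(n+1)!}{2^n}$, recovering Proposition \ref{expected_real_number} and Remark \ref{rmk:expected-TRN}. There is no serious obstacle here: once independence of the $l_i$ is recognized, every moment reduces to a product of power sums $\mathbb{E}(l_i^p)=\frac{1}{i}\sum_{k=1}^i k^p$, and the main (mild) work is the arithmetic simplification via Faulhaber's formula, which is clean for $p=1,2$ but produces messier closed forms for general $p$. The only point needing care is the justification that the pushforward of the uniform measure along $l$ is the product uniform measure, so that the expectation of the product equals the product of the expectations.
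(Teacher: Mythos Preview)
Your proof is correct. The paper argues by induction on $n$, using the recursion from Theorem~\ref{thm:Dirichlet-distribution} to write $\mathbb{E}(\Pi_{k+1}^2)=\big(\sum_{a=1}^{k+1}a^2\big)\,\mathbb{E}(\Pi_k^2)$ and then simplifying via the sum-of-squares formula step by step. You instead invoke the bijection $l:S_n\to[1]\times\cdots\times[n]$ directly to get independence of the $l_i$, factor $\mathbb{E}(R^2)=\prod_i\mathbb{E}(l_i^2)$ in one stroke, and close the product with the double-factorial identity $(2n+1)!!=(2n+1)!/(2^n n!)$. The two arguments are the same computation unwound differently: the paper's inductive step is precisely your product peeled off one factor at a time. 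Your route is a bit more transparent and avoids the induction bookkeeping; the paper's route stays closer to the Dirichlet-convolution framework it has just set up and makes the recursive structure of the moments explicit, which is what feeds into the higher-moment remark that follows.
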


\begin{proof}
%Let $\mathcal{B}_n$ be the set of strict barcodes with $n$ non-essential bars. 
We prove the result by induction on $n$. The base case ($n=1$) holds trivially, so assume that the formula holds for $n=k$. 

Consider $\mathbb{E}(\pi_{k+1}^2)=\sum_{b\in B_n}R(b)^2$. Since
$$(k+1)!\sum_{b\in B_n}R(b)^2=(k+1)!\mathbb{E}(\pi_{k+1}^2)=\sum_{x\in \mathbb{N}}m_{k+1}(x)^2,$$ 
to prove our result, we need only show that 
$$\sum_{x\in \mathbb{N}}m_{k+1}(x)^2=\frac{(n+1)!(2n+1)!}{12^n}.$$ 
We call the quantity on the left $\mathbb{E}(\Pi_{k+1}^2)$: $$\mathbb{E}(\Pi_{k+1}^2)=\sum_{b\in B_{k+1}}R(b)^2=\sum_{a=1}^{k+1}\sum_{b\in B_k} (aR(b))^2=\sum_{a=1}^{k+1}a^2\sum_{b\in B_k}R(b)^2 =\sum_{a=1}^{k+1}a^2 \mathbb{E}(\Pi_{k}^2).$$ By the sum of squares formula, we can rewrite this as $$\bigg(\frac{(k+1)(k+2)(2k+3)}{6}\bigg)\mathbb{E}(\Pi_{k}^2)$$ $$=\bigg(\frac{(k+1)(k+2)(2k+3)(2k+2)}{(2k+2)6}\bigg)\mathbb{E}(\Pi_{k}^2)$$ $$=\bigg(\frac{(k+2)(2k+2)(2k+3)}{2*6}\bigg)\mathbb{E}(\Pi_{k}^2)$$ $$=\bigg(\frac{(k+2)(2k+2)(2k+3)}{12}\bigg)\bigg(\frac{(k+1)!(2k+1)!}{12^k}\bigg)$$ $$\frac{(k+2)!(2k+3)!}{12^{k+1}}=\frac{((k+1)+1)!(2(k+1)+1)!}{12^{k+1}}.$$ 
\end{proof}

\begin{corollary}
$\mathbb{V}(\pi_n)=\mathbb{E}(\pi_n^2)-\mathbb{E}(\pi_n)^2=\frac{1}{n!}\bigg(\frac{(n+1)!(2n+1)!}{12^n}-\frac{(n!(n+1)!)^2}{n!4^n}\bigg).$
\end{corollary}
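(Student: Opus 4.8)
The plan is to obtain the variance by straightforward substitution, since both moments needed have just been established. By definition, $\mathbb{V}(\pi_n) = \mathbb{E}(\pi_n^2) - \mathbb{E}(\pi_n)^2$, so I would begin by recalling the first moment from Proposition \ref{expected_real_number}, namely $\mathbb{E}(\pi_n) = \frac{(n+1)!}{2^n}$, together with the second moment from the immediately preceding proposition, $\mathbb{E}(\pi_n^2) = \frac{(n+1)(2n+1)!}{12^n}$.

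Squaring the first moment gives $\mathbb{E}(\pi_n)^2 = \frac{((n+1)!)^2}{4^n}$, using $(2^n)^2 = 4^n$. Subtracting this from the second moment yields
\[
\mathbb{V}(\pi_n) = \frac{(n+1)(2n+1)!}{12^n} - \frac{((n+1)!)^2}{4^n}.
\]

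The only remaining task is to verify that this agrees with the form displayed in the statement. Pulling out a factor of $\frac{1}{n!}$, I would rewrite the first term using $(n+1) = \frac{(n+1)!}{n!}$, so that $\frac{(n+1)(2n+1)!}{12^n} = \frac{1}{n!}\cdot\frac{(n+1)!(2n+1)!}{12^n}$, and the second term using $((n+1)!)^2 = \frac{(n!(n+1)!)^2}{(n!)^2}$, so that $\frac{((n+1)!)^2}{4^n} = \frac{1}{n!}\cdot\frac{(n!(n+1)!)^2}{n!\,4^n}$. Collecting both terms under the common factor $\frac{1}{n!}$ reproduces exactly the bracketed expression in the corollary.

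Since every step is an elementary algebraic rearrangement resting on the two prior propositions, there is no genuine obstacle here; the only point requiring a moment's care is bookkeeping the factorials when matching the (deliberately unreduced) displayed form, in particular confirming that the $n!$ factors cancel consistently between the two presentations.
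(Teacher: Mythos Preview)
Your proposal is correct and matches the paper's approach: the corollary is stated without proof in the paper, being an immediate consequence of the two preceding propositions via the standard variance identity, and your substitution and algebraic bookkeeping recover the displayed expression exactly.
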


\begin{remark}[Higher Moments of the TRN]
In general, we can define the $k$-th moment $\mathbb{E}(\pi_n^k)$ by rewriting $n!\mathbb{E}(\pi_n^k)=\mathbb{E}(\Pi_n^k)=(\sum^{n}_{a=1}a^k)\mathbb{E}(\Pi_n^{k-1})$ and using this recursive relationship to compute a formula. We note that by Faulhaber's formula, $$\sum^n_{a=1}a^k=\sum^k_{i=0}\frac{(-1)^{k-i}}{i+1} \binom{k}{i} B_{k-i}n^{i+1},$$ where $B_{k-i}$ is the $k-i$ Bernoulli number.
\end{remark}

One can view the results above as a complete characterization of TRNs under the null hypothesis that combinatorial classes of barcodes are distributed uniformly or as part of the growing literature on statistics on the symmetric group, see e.g., \cite{Kondorphdthesis}. 
In the following section, we investigate another such statistic. 

\subsection{Distributions of Log Realization Numbers}

Since the maximum realization number for a barcode with $n$ non-essential bars is $n!$, it is 
convenient to work instead with the logarithm of the realization number, which we call the log realization number.
The log realization number was used in \cite{TRN} as a statistic on barcodes obtained from dendrites; see Figure \ref{realization_bio} for a reminder. This was shown to distinguish between apical and cortical dendrites. 
Of course, the process of taking the logarithm affects the distribution of TRNs. Jensen's inequality provides a way to bound the expected log realization number.
In this section we compute the expected log realization number of uniformly drawn barcodes.

\begin{proposition}\label{prop:expected-log-TRN}
The expected log realization number for a combinatorial class $B$ of barcodes drawn from the uniform distribution on $S_n$ is
\[
\mathbb{E}_{\mu_n}\Big(\log\big(R(B)\big)\Big)=\sum^n_{i=1}\frac{\log(i!)}{i}.
\]
\end{proposition}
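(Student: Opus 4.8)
The plan is to exploit the fact that the logarithm turns the product formula for the realization number into a sum, so that linearity of expectation reduces the entire computation to understanding the marginal distribution of a single coordinate of the left inversion vector. Concretely, Lemma~\ref{lem:TRN-left-inversion} gives $R(B)=\prod_{i=1}^n l_i(\sigma)$ where $\sigma=\sigma(B)$, and since each $l_i(\sigma)\geq 1$ (the index $j=i$ always lies in the defining set), we have $\log R(B)=\sum_{i=1}^n \log l_i(\sigma)$ with every term finite. Taking expectations over the uniform measure $\mu_n$ on $S_n$ and using linearity, I obtain
\[
\mathbb{E}_{\mu_n}\big(\log R\big)=\sum_{i=1}^n \mathbb{E}_{\mu_n}\big(\log l_i\big),
\]
so it remains only to evaluate each term $\mathbb{E}_{\mu_n}(\log l_i)$.

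The key step is the distributional identity for the individual coordinate $l_i$. Recall the bijection $l:S_n \to [1]\times[2]\times\cdots\times[n]$ recorded just before Lemma~\ref{lem:TRN-left-inversion}, sending $\sigma$ to its left inversion vector. Because $l$ is a bijection between finite sets, it pushes the uniform measure on $S_n$ forward to the uniform measure on the product $[1]\times\cdots\times[n]$; and the uniform measure on a Cartesian product of finite sets is exactly the product of the uniform measures on the factors. Consequently, under $\mu_n$ the coordinates $l_1,\ldots,l_n$ are independent, and in particular the marginal law of $l_i$ is uniform on $[i]=\{1,2,\ldots,i\}$. (Independence is not strictly needed for this statement; only the marginal is used.)

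It then follows by a direct computation that
\[
\mathbb{E}_{\mu_n}(\log l_i)=\frac{1}{i}\sum_{k=1}^i \log k=\frac{\log(i!)}{i},
\]
and summing over $i$ yields the claimed formula. I anticipate no serious obstacle here: the computation is essentially forced once the product structure is in place, and the only point requiring genuine care is the justification that $l_i$ is uniformly distributed on $\{1,\ldots,i\}$. I would present that observation explicitly as the structural heart of the argument, deriving it from the product form of the codomain of the bijection $l$ rather than from a hands-on inversion count, since this is precisely what makes the final expression collapse so cleanly.
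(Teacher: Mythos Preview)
Your proof is correct and follows essentially the same approach as the paper: both convert the product $R(B)=\prod_i l_i$ into a sum via the logarithm, then use the bijection $l:S_n\to[1]\times\cdots\times[n]$ to reduce to the marginal distribution of each $l_i$ being uniform on $\{1,\ldots,i\}$. Your presentation via linearity of expectation and pushforward of the uniform measure is slightly cleaner than the paper's direct double-sum manipulation, but the mathematical content is identical.
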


\begin{proof}
Recall that the set of left inversion vectors can be coordinatized as $[1]\times[2]\times...\times[n]$.
Since this Cartesian product has size $n!$, a uniform distribution on $S_n$ can be viewed as a uniform distribution on the set of left inversion vectors.
The notation $\mathbb{P}(B\sim \mu_n)$ denotes the probability of a combinatorial equivalence class of barcodes $B$ under the uniform distribution, that is  $\frac 1 {n!}$.  It follows that
\begin{eqnarray*}
\mathbb{E}_{\mu_n}\Big(\log\big(R(B)\big)\Big) &=& \sum_{B\in S_n}\log(\prod^n_{i=1}l_i(B))\mathbb{P}(B\sim \mu_n) \\
&=&\frac{1}{n!}\sum_{B\in [1]\times...\times[n]}\sum^n_{i=1}\log(l_i(B)) \\
&=& \frac{1}{n!}\sum^n_{i=1}\sum_{B\in [1]\times...\times[n]}\log(l_i(B)).
\end{eqnarray*}
% $$\mathbb{E}_U(\log(R(b))=\sum_{b\in S_n}\log(\prod^n_{i=1}l_i(b))\mathbb{P}(b\sim U)=$$ $$\frac{1}{n!}\sum_{b\in [1]\times...\times[n]}\sum^n_{i=1}\log(l_i(b))=\frac{1}{n!}\sum^n_{i=1}\sum_{b\in [1]\times...\times[n]}\log(l_i(b)).$$ 
Since $B\sim \mu_n$, and each coordinate in $[1]\times[2]\times...\times[n]$ is independent, the interior sum (for fixed $i$) is equal to $\frac{n!}{i}(\log(1)+\log(2)+...+\log(i))$. 
Hence 
\[
\mathbb{E}_{\mu_n}(\log(R(b)))=\sum^n_{i=1}\frac{\log(1)+\log(2)+...+\log(i)}{i}=\sum^n_{i=1}\frac{\log(i!)}{i}.
\]
\end{proof}

%%%%%%%%%%%%%%%%%%%%%%%%%%%%%%%%%%%%%%%%%%%%%%%%%%%%%%%%%%%%%%%%%%%%%%%%%

\section{Conclusion}

In this paper, we extended the results of \cite{TRN} and \cite{curry2017fiber} to provide a more precise characterization of the distribution of tree realization numbers (TRNs).
This investigation led us to consider the uniform distribution on the symmetric group and the expected TRN, which in turn put us in a setting where classical results from combinatorics could be used.
This extraction of the notion of a combinatorial version of a merge tree led us to understand more precisely the difference between merge trees and metric phylogenetic trees \cite{BHV}.

We emphasize that the TRN provides a convenient summary statistic on the space of barcodes that could lead to a better understanding of inherent biological properties of neurons. 
If we can identify where biological barcodes live on the space of barcodes, it opens the door to many applications such as statistics of learning of ``biological'' barcodes, allowing to create artificial barcodes that mimic the properties of biological ones and hence to generate neurons from them that are statistically relevant, yet express higher variability. 
By studying the simplest possible version of a null hypothesis---where combinatorial equivalence classes of barcodes are uniformly distributed---we are in a position to move on to study more interesting variants on the null hypothesis in TDA and explore the geometry of barcode space in even greater detail.
% In \cite{Kanari2020.04.15.040410}, they describe an algorithm to generate artificial neurons from biological barcodes. 
% The motivation here is to go one step further down the persistence pipeline by characterizing  understanding the biological barcodes to generate artificial neurons from artificial barcodes.  
\section{Aknowledgments}

JC would like to acknowledge NSF Grant
CCF-1850052 and NASA Contract 
\\
80GRC020C0016 for supporting his research. LK was supported by funding to the Blue Brain Project, a research center of the École polytechnique fédérale de Lausanne (EPFL), from the Swiss government’s ETH Board of the Swiss Federal Institutes of Technology. AG and KH gratefully acknowledge the support of Swiss National Science Foundation, Grant No.  CRSII5\_177237.

\bibliography{mybib}{}
\bibliographystyle{plain}

\end{document}